\numberwithin{equation}{section}
\theoremstyle{theorem}
\newtheorem{theorem}{Theorem}[section]
\newtheorem{lemma}[theorem]{Lemma}
\newtheorem{proposition}[theorem]{Proposition}
\newtheorem{corollary}[theorem]{Corollary}
\theoremstyle{definition}
\newtheorem{definition}[theorem]{Definition}
\newtheorem{remark}[theorem]{Remark}
\renewcommand{\a}{\alpha}
\newcommand{\ova}{{\overline\alpha}}
\renewcommand{\b}{\beta}
\newcommand{\g}{\gamma}
\renewcommand{\d}{\delta}
\newcommand{\x}{\xi}
\newcommand{\sub}{\subset}
\newcommand{\R}{\mathbb{R}}
\newcommand{\N}{\mathbb{N}}
\newcommand{\Z}{\mathbb{Z}}
\newcommand{\X}{\mathbb{X}}
\newcommand{\MM}{\mathcal{M}}
\newcommand{\NN}{\mathcal{N}}
\newcommand{\FF}{\mathcal{F}}
\newcommand{\PP}{\mathcal{P}}
\newcommand{\TP}{\mathcal{TP}}
\newcommand{\CP}{\mathcal{CP}}
\newcommand{\we}{\wedge}
\newcommand{\hel} {
\hskip2.5pt{\vrule height7pt width.5pt depth0pt}
\hskip-.2pt\vbox{\hrule height.5pt width7pt depth0pt}
\,}
\newcommand{\restr}{\hel}
\newcommand{\srestr}{
\hskip2.5pt{\vrule height7pt width.5pt depth0pt}
\hskip-.2pt\vbox{\hrule height.5pt width7pt depth0pt}
\!\!\raisebox{1pt}{{$^*$}}\,}
\newcommand{\pf} {\,\!_\#\,} 
\renewcommand{\j}{\jmath}
\renewcommand{\i}{\imath}
\newcommand{\sm}{\backslash}
\newcommand{\pt}{\partial}
\newcommand{\eps}{\varepsilon}
\newcommand{\oo}{\infty}
\newcommand{\loc}{_{\textnormal{loc}}}
\newcommand{\ov}{\overline}
\newcommand{\wh}{\widehat}
\newcommand{\wt}{\widetilde}
\newcommand{\h}{\mathcal{H}}
\newcommand{\dw}{\downarrow}
\newcommand{\up}{\uparrow}
\newcommand{\cd}{\cdot}
\newcommand{\longto}{\longrightarrow}
\newcommand{\Id}{\text{Id}}
\renewcommand{\t}{\times}
\newcommand{\om}{\omega}
\newcommand{\ds}{\displaystyle}
\newcommand{\lb}{\llbracket}
\newcommand{\rb}{\rrbracket}
\newcommand{\la}{\langle}
\newcommand{\ra}{\rangle}
\newcommand{\be}{\begin{equation}}
\newcommand{\ee}{\end{equation}}
\newcommand{\XXint}[3]{{\setbox0=\hbox{$#1{#2#3}{\int}$}
      \vcenter{\hbox{$#2#3$}}\kern-.5\we0}}
\newcommand{\void}{\varnothing}
\newcommand{\st}{\stackrel}
\newcommand{\lt}{\left}
\newcommand{\rt}{\right}
\newcommand{\M}{\mathbb{M}}
\newcommand{\F}{\mathbb{F}}
\newcommand{\Mt}{\mathbb{M}^{\text{\tiny$\times$}}}
\newcommand{\Nt}{\mathbb{N}^{\text{\tiny$\times$}}}
\newcommand{\Fw}{\mathbb{F}^{\text{\tiny$\we$}}}
\newcommand{\Mw}{\mathbb{M}^{\text{\tiny$\we$}}}
\newcommand{\Nw}{\mathbb{N}^{\text{\tiny$\we$}}}
\newcommand{\chiw}{\chi^{\text{\tiny$\we$}}}
\newcommand{\GG}{\mathcal{G}}
\newcommand{\CC}{\mathcal{C}}
\DeclareMathOperator{\OO}{O}
\DeclareMathOperator{\supp}{supp}
\DeclareMathOperator{\Span}{span}
\DeclareMathOperator{\Sl}{Sl}
\DeclareMathOperator{\Gr}{Gr}
\newcommand{\red}{\color{red}}
\newcommand{\comfig}[1]{}
\title{A deformation theorem for tensor flat chains and applications (complement to ``Tensor rectifiable $G$-flat chains'')}
\author{M. Goldman\footnote{ CMAP, CNRS, \'Ecole polytechnique, Institut Polytechnique de Paris, 91120 Palaiseau,
France, email: michael.goldman@cnrs.fr} \and B. Merlet\footnote{Univ. Lille, CNRS, UMR 8524, Inria- Laboratoire Paul Painlevé, F-59000 Lille, email: benoit.merlet@univ-lille.fr}}
\begin{document}
\maketitle

\begin{abstract}
In this note we extend White's deformation theorem for $G$-flat chains to the setting of $G$-flat \emph{tensor} chains. As a corollary we obtain that the groups of \emph{normal tensor} chains identify with some subgroups of normal chains. Moreover the corresponding natural group isomorphisms are isometric with respect to norms based on the~\emph{coordinate slicing mass}. 

The coordinate slicing mass of a $k$-chain is the integral of the mass of its $0$-slices along all coordinate-planes of codimension $k$. 
The fact that this quantity is equivalent to the usual mass is not straightforward. To prove it, we use the deformation theorem and a partial extension of the restriction operator defined for all chains (not only of finite mass).

On the contrary, except in some limit or degenerate cases, the whole groups of tensor chains and of finite mass tensor chains do not identify naturally with subgroups of chains.

\end{abstract}


\appendix 

This article is an appendix to~\cite{GM_tfc} and is not meant to be read independently. In that paper, the groups $\FF^G_{k_1,k_2}(\R^n)$ of~\emph{tensor} chains in $\R^n=\R^{n_1}\t\R^{n_2}$  with coefficients in an Abelian normed group $G$ are introduced. We recall that they  can be defined as the completion of the tensor product $\FF^\Z_{k_1}(\R^n)\otimes\FF^G_{k_2}(\R^n)$ with respect to its projective norm. \\
We use the same notation as  in the main article. Let us recall that $n=n_1+n_2$ and $k=k_1+k_2$ with $0\le k_l\le n_l$ for $l=1,2$. Moreover, $(e_1,\dots,e_n)$ denotes the standard basis of $\R^n$, $I^n_k$ is the set of subsets of $\{1,\dots,n\}$ with $k$-elements, $\ov\g$ is the complement in $\{1,\dots,n\}$ of $\g\in I^n_k$ and $\X_\g:=\Span\{e_i:i\in\g\}$, more generally we denote $\X_\g(x):=x+\X_\g$ for $x\in\X_{\ov\g}$,  eventually we set $\a:=(1,\dots,n_1)$ so that $\ova=(n_1+1,\dots,n)$. 

\section{Introduction and main results}
The original deformation theorem of Federer and Fleming~\cite{FedFlem60} for normal currents was first improved by Simon~\cite{Simon1983} and later generalized to $G$-flat chains by White~\cite{White1999-1}. Here, we extend it in Theorem~\ref{thm_defthm} to the setting of tensor $G$-flat chains of~\cite{GM_tfc}. The idea is still to ``deform'' a given $(k_1,k_2)$-chain on the $k$-skeleton of some regular grid. This leads to family of projections of this chain on a subgroup of the $(k_1,k_2)$-polyhedral chains with some control on the norms of the projection and of the chain swept during the deformation.

With this result at hand, we characterize the preimage of the group of normal $(k_1,k_2)$-chains by the group morphism $\j_{k_1,k_2}$ introduced in~\cite[Subsection~4.6]{GM_tfc}. Another outcome of this study is the equivalence of the mass and of the slicing mass (both for tensor and classical flat chains). To the best of our knowledge, and while this result is  very natural,  it does seem to appear in the available literature.\medskip

\subsubsection*{Group isomorphisms}
As in the main paper,  we denote
\[
D_k:=\{(k'_1,k'_2): 0\le k'_1\le n_1,\ 0\le k'_2\le n_2,\ k'_1+k'_2=k\}.
\]
The continuous group morphism $\j$ associates to a $k$-chain $A$ a list of $(k'_1,k'_2)$-chains $\j_{k'_1,k'_2}A$ for $(k'_1,k'_2)\in D_k$.  Roughly speaking, the tensor chain $\j_{k_1,k_2}A$ corresponds to the component of $A$ which is of dimension $k_1$ in $\X_\a$ and of dimension $k_2$ in $\X_\ova$. Given $(k_1,k_2)\in D_k$, we are interested in characterizing the $(k_1,k_2)$-chains which identify naturally with usual flat chains. In light of the description of $\j$ these later lie in the following group (already introduced in~\cite[Definition~5.9]{GM_tfc}).
\[
\FF^G_{k,(k_1,k_2)}(\R^n):=\lt\{ A\in \FF^G_k(\R^n): \j_{k'_1,k'_2}A=0 \text{ for } (k'_1,k'_2)\in D_k\sm\{(k_1,k_2)\}\rt\}.
\]
Let us also recall the definition of the subgroups:
\[
\MM^G_{k,(k_1,k_2)}(\X_\b):=\FF^G_{k,(k_1,k_2)}(\X_\b)\cap\MM^G_k(\X_\b),\quad\qquad \NN^G_{k,(k_1,k_2)}(\X_\b):=\FF^G_{k,(k_1,k_2)}(\X_\b)\cap\NN^G_k(\X_\b).
\]
By~\cite[Proposition~5.14]{GM_tfc}, we have the one-to-one Lipschitz continuous group morphisms,
\begin{eqnarray}
\label{j1}
\j_{k_1,k_2}:&\FF^G_{k,(k_1,k_2)}(\R^n)&\!\hookrightarrow\ \FF^G_{k_1,k_2}(\R^n),\\
\label{j2}
\j_{k_1,k_2}:&\MM^G_{k,(k_1,k_2)}(\R^n)&\!\hookrightarrow\ \MM^G_{k_1,k_2}(\R^n),\\
\label{j3}
\j_{k_1,k_2}:&\NN^G_{k,(k_1,k_2)}(\R^n)&\!\hookrightarrow\ \NN^G_{k_1,k_2}(\R^n),
\end{eqnarray} 
where the group $\NN^G_{k_1,k_2}(\R^n)$ of normal $(k_1,k_2)$-chains is formed by the elements $A'\in\FF^G_{k_1,k_2}(\R^n)$ such that $\Nw(A'):=\Mw(A')+\Mw(\pt_1A')+\Mw(\pt_2A')<\oo$. 
The main question we wish to address is whether the above mappings are onto. For the last one and for $k=0$, 
we have established in~\cite[Theorem~5.15]{GM_tfc} that, indeed, $\j_{0,k}$ maps $\NN^G_{k,(0,k)}(\R^n)$ onto $\NN^G_{0,k}(\R^n)$. Using the deformation theorem, see Theorem~\ref{thm_defthm} below, we extend here this result to arbitrary dimensions $k_1,k_2$.
\begin{theorem}\label{thm_Ups_2}~\\
The mapping $\j_{k_1,k_2}$ defines a group isomorphism from $\NN^G_{k,(k_1,k_2)}(\R^n)$ onto $\NN^G_{k_1,k_2}(\R^n)$. 
\end{theorem}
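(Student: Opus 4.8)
The map $\j_{k_1,k_2}$ is already known to be an injective Lipschitz group morphism from $\NN^G_{k,(k_1,k_2)}(\R^n)$ into $\NN^G_{k_1,k_2}(\R^n)$ by~\eqref{j3}, so the only thing to prove is surjectivity: given a normal tensor chain $A'\in\NN^G_{k_1,k_2}(\R^n)$, one must produce a normal chain $A\in\NN^G_{k,(k_1,k_2)}(\R^n)$ with $\j_{k_1,k_2}A=A'$. The strategy is to approximate $A'$ by polyhedral tensor chains for which a preimage is obvious, and then pass to the limit using the deformation theorem of Theorem~\ref{thm_defthm} to keep uniform control on the normal masses.

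First I would set up the polyhedral approximation. For a $(k_1,k_2)$-polyhedral chain of the special product form $P\ot Q$ with $P$ a $k_1$-polyhedron in $\X_\a$ and $Q$ a $k_2$-polyhedron in $\X_\ova$, the ``diagonal'' chain $P\t Q\in\FF^G_k(\R^n)$ lies in $\FF^G_{k,(k_1,k_2)}(\R^n)$ and satisfies $\j_{k_1,k_2}(P\t Q)=P\ot Q$; by linearity this defines a preimage on the dense subgroup of $(k_1,k_2)$-polyhedral chains. So pick a sequence $P'_m$ of polyhedral tensor chains with $P'_m\to A'$ in the flat tensor norm; one may further arrange, using that $A'$ is normal together with the deformation theorem applied to $A'$ on a fine grid, that $\Mw(P'_m)$ and $\Mw(\pt_1 P'_m)+\Mw(\pt_2 P'_m)$ stay bounded — indeed the deformation theorem produces polyhedral approximants on the $k$-skeleton of a grid of size $\eps$ with normal mass controlled by $\Nw(A')$ plus a term vanishing as $\eps\to0$, and the flat distance to $A'$ is $O(\eps)$. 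Let $P_m\in\FF^G_{k,(k_1,k_2)}(\R^n)$ be the polyhedral preimages, $\j_{k_1,k_2}P_m=P'_m$.

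Next comes the limit. The sequence $P'_m$ is flat-Cauchy with uniformly bounded normal masses; I want to transfer this to $P_m$. The key point is a \emph{lower} bound: the injectivity estimates behind~\eqref{j3} (from~\cite[Proposition~5.14]{GM_tfc}) should give, on $\FF^G_{k,(k_1,k_2)}$, a bound of the form $\FF(B)\le C\,\Fw(\j_{k_1,k_2}B)$ and $\MM(B)\le C\,\Mw(\j_{k_1,k_2}B)$ for chains supported in a fixed ball, using equivalence of mass and coordinate slicing mass as established in the main body of this note. Granting such a comparison, $P_m$ is flat-Cauchy with uniformly bounded mass and boundary mass, hence converges (after the usual compactness for normal chains) to some $A\in\NN^G_k(\R^n)$ with $\MM(A)\le\liminf\MM(P_m)<\oo$ and similarly for $\pt A$. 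Membership $A\in\FF^G_{k,(k_1,k_2)}(\R^n)$ follows because this subgroup is flat-closed (it is the kernel of the continuous morphisms $\j_{k'_1,k'_2}$, $(k'_1,k'_2)\ne(k_1,k_2)$). Finally continuity of $\j_{k_1,k_2}$ gives $\j_{k_1,k_2}A=\lim\j_{k_1,k_2}P_m=\lim P'_m=A'$, so $A$ is the desired preimage and $\j_{k_1,k_2}$ is onto.

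The main obstacle I anticipate is the quantitative lower bound $\MM(B)\le C\,\Mw(\j_{k_1,k_2}B)$ (equivalently, controlling $\Nw(P_m)$ in terms of $\Nw(P'_m)$ without letting constants blow up): the naive inequality goes the wrong way, and making it work is exactly where the equivalence between mass and coordinate slicing mass — the secondary result advertised in the abstract — has to be invoked, since the coordinate-slicing description of the mass is what behaves well under $\j_{k_1,k_2}$. Care is also needed to keep everything supported in a fixed compact set through the deformation step, so that the compactness theorem for normal chains applies; a standard cutoff/retraction argument near the boundary of a large cube handles this but must be combined with the tensor structure so as not to leave $\FF^G_{k,(k_1,k_2)}$.
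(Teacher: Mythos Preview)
Your overall architecture is right---approximate $A'$ by polyhedral tensor chains $P'_m$ coming from the deformation theorem, lift them to polyhedral $k$-chains $P_m\in\FF^G_{k,(k_1,k_2)}(\R^n)$, and pass to the limit---but the step you flag as ``the main obstacle'' is not merely an obstacle: the inequality $\F(B)\le C\,\Fw(\j_{k_1,k_2}B)$ on $\FF^G_{k,(k_1,k_2)}(\R^n)$ is \emph{false}. If it held (even only for chains supported in a fixed ball), $\j_{k_1,k_2}$ would be a bi-Lipschitz embedding of $\FF^G_{k,(k_1,k_2)}(\R^n)$ into $\FF^G_{k_1,k_2}(\R^n)$, hence with closed image; since that image contains the dense subgroup of polyhedral tensor chains, $\j_{k_1,k_2}$ would be onto at the level of flat chains, contradicting Proposition~\ref{prop_j_not_onto}. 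The slicing-mass equivalence does salvage the \emph{mass} bound $\M(B)\le C\,\Mw(\j_{k_1,k_2}B)$ via Theorems~\ref{thm_M_et_SlM} and~\ref{thm_MSlj=MSl}, but there is no corresponding flat-norm statement, and your fallback to ``the usual compactness for normal chains'' is not available for an arbitrary complete normed group $G$.

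The paper's proof bypasses the missing inequality entirely. Rather than comparing $\F$ with $\Fw$, it proves directly that the sequence $Q_i(y):=\tau_{-y}\varPi^0_{\eps_i}\tau_yA'$ (with $\eps_i=2^{-i}$) is Cauchy in $L^1((0,1)^n,\FF^G_k(\R^n))$. The deformation identity~\eqref{Theta_1} gives
\[
Q_j-Q_i=\pt\big([\varPi^1_{\eps_j}+\varPi^2_{\eps_j}-\varPi^1_{\eps_i}-\varPi^2_{\eps_i}](\tau_yA')\big)
+[\varPi^1_{\eps_j}+\varPi^2_{\eps_j}-\varPi^1_{\eps_i}-\varPi^2_{\eps_i}](\tau_y\pt A'),
\]
but the right-hand side lives in $\FF^G_{k_1,k_2}$, not in $\FF^G_k$. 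The key trick is to apply $\varPi^0_{\eps_j}\tau_{y'}$ to both sides: the left side is unchanged (both $Q_i$ and $Q_j$ are already on the $\eps_j$-grid since $\eps_i/\eps_j\in\Z$), while the right side becomes a sum of \emph{polyhedral} tensor chains, which can be read as classical chains with $\M$-mass controlled, via Theorem~\ref{thm_defthm}(iv), by the $\Mw$-mass of the $\varPi^l$-terms. Averaging in $y'$ and then in $y$ yields $\int\F(Q_j-Q_i)\,dy\le C\eps_i\,\Nw(A')$. This produces the preimage $A(y)$ without ever invoking a reverse flat-norm inequality or compactness.
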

As $\j_{k_1,k_2}$ commutes with restrictions we get the following immediate corollary (recall Definition~6.1 in~\cite{GM_tfc}: $A'\in\MM^G_{k_1,k_2}(\R^n)$ is $(k_1,k_2)$-rectifiable if $A'=A'\restr (\Sigma_1\times \Sigma_2)$ for a $k_l$-rectifiable sets $\Sigma_l\sub\R^{n_l}$). 
\begin{corollary}\label{coro_Ups_4}
The operator $\j_{k_1,k_2}$ maps the group of normal $(k_1,k_2)$-rectifiable chains onto the group of normal rectifiable $(k_1,k_2)$-chains.
\end{corollary}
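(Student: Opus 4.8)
The plan is to deduce Corollary~\ref{coro_Ups_4} from Theorem~\ref{thm_Ups_2} by checking that the isomorphism $\j_{k_1,k_2}:\NN^G_{k,(k_1,k_2)}(\R^n)\to\NN^G_{k_1,k_2}(\R^n)$ restricts to a bijection between the subgroups of rectifiable elements on each side. Thus the argument splits into two inclusions: first, if $A\in\NN^G_{k,(k_1,k_2)}(\R^n)$ is a normal rectifiable chain (in the classical sense, i.e.\ $A=A\restr\Sigma$ for a $k$-rectifiable $\Sigma\sub\R^n$), then $\j_{k_1,k_2}A$ is a normal rectifiable $(k_1,k_2)$-chain, i.e.\ $\j_{k_1,k_2}A=(\j_{k_1,k_2}A)\restr(\Sigma_1\t\Sigma_2)$ for $k_l$-rectifiable $\Sigma_l\sub\R^{n_l}$; second, the converse, that every normal rectifiable $(k_1,k_2)$-chain $A'$ is the image of such an $A$.

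For the forward inclusion I would use the fact, invoked in the statement of the corollary, that $\j_{k_1,k_2}$ commutes with restrictions. Concretely, since $A$ is rectifiable it is supported (up to $G$-flat equivalence on sets of finite $\h^k$ measure) on a countable union of bounded Lipschitz patches; by the slicing description of $\j$ recalled in the introduction, $\j_{k_1,k_2}A$ captures exactly the part of $A$ that is $k_1$-dimensional in the $\X_\a$ directions and $k_2$-dimensional in $\X_\ova$. One would then argue that the image of a $k$-rectifiable set under the relevant projection/product structure is contained in a product $\Sigma_1\t\Sigma_2$ with $\Sigma_l$ being $k_l$-rectifiable — this is essentially a measure-theoretic statement about how a $k$-rectifiable set splitting as $(k_1,k_2)$ in the two factors must project onto rectifiable sets of the correct dimensions in each factor, using that the approximate tangent planes are split. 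Combined with the commutation $\j_{k_1,k_2}(A\restr S)=(\j_{k_1,k_2}A)\restr S$, applied with $S=\Sigma_1\t\Sigma_2$ (a Borel set containing $\Sigma$ up to negligible parts), this yields $\j_{k_1,k_2}A=(\j_{k_1,k_2}A)\restr(\Sigma_1\t\Sigma_2)$, i.e.\ rectifiability of the tensor image in the sense of Definition~6.1 of~\cite{GM_tfc}.

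For the reverse inclusion, let $A'\in\NN^G_{k_1,k_2}(\R^n)$ be rectifiable, so $A'=A'\restr(\Sigma_1\t\Sigma_2)$. By Theorem~\ref{thm_Ups_2} there is a unique $A\in\NN^G_{k,(k_1,k_2)}(\R^n)$ with $\j_{k_1,k_2}A=A'$; it remains to show $A$ is rectifiable. I would again use that $\j_{k_1,k_2}$ is injective on normal chains and commutes with restrictions: set $S:=\Sigma_1\t\Sigma_2$, which is a $k$-rectifiable subset of $\R^n$ of the appropriate split type. Then $\j_{k_1,k_2}(A\restr S)=(\j_{k_1,k_2}A)\restr S=A'\restr S=A'=\j_{k_1,k_2}A$, and since $A\restr S$ is again in $\NN^G_{k,(k_1,k_2)}(\R^n)$ (restriction preserves the defining vanishing conditions and normality is preserved for restrictions of normal rectifiable chains to rectifiable sets), injectivity gives $A=A\restr S$, so $A$ is $k$-rectifiable — and since it lies in $\FF^G_{k,(k_1,k_2)}$ it is automatically of the split type required. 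Hence $\j_{k_1,k_2}$ maps normal rectifiable $(k_1,k_2)$-chains onto normal rectifiable $(k_1,k_2)$-chains.

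The main obstacle I expect is the measure-theoretic step in the forward direction: verifying precisely that a $k$-rectifiable set on which a chain is supported, when that chain has vanishing $\j_{k'_1,k'_2}$-components for all $(k'_1,k'_2)\ne(k_1,k_2)$, can be replaced (modulo a $\h^k$-null set) by one contained in a product $\Sigma_1\t\Sigma_2$ with $\Sigma_l$ being $k_l$-rectifiable. This requires relating the vanishing of the off-diagonal slices of $A$ to the structure of the approximate tangent planes of $\Sigma$ — showing they are almost everywhere of the form $P_1\oplus P_2$ with $P_l\sub\X_{\a},\X_{\ova}$ of dimensions $k_1,k_2$ — and then applying a Fubini/coarea-type argument (or the slicing characterization of rectifiability) to produce the factors. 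Everything else is a formal manipulation of the isomorphism and the restriction operator, whose compatibility is quoted from~\cite{GM_tfc}.
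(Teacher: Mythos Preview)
You have misread the hypothesis in the forward direction. In the paper, ``$(k_1,k_2)$-rectifiable'' is a single adjective (recalled just before the corollary): a chain $A$ is $(k_1,k_2)$-rectifiable if $A=A\restr(\Sigma_1\t\Sigma_2)$ with $\Sigma_l\sub\R^{n_l}$ $k_l$-rectifiable. It is \emph{not} the same as ``$A$ is a classically $k$-rectifiable chain lying in $\NN^G_{k,(k_1,k_2)}$''. With the correct definition, the forward direction is as immediate as your reverse one: from $A=A\restr(\Sigma_1\t\Sigma_2)$ and commutation of $\j_{k_1,k_2}$ with restrictions, one gets directly $\j_{k_1,k_2}A=(\j_{k_1,k_2}A)\restr(\Sigma_1\t\Sigma_2)$. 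This is exactly the paper's one-line proof (``As $\j_{k_1,k_2}$ commutes with restrictions\dots''), and the ``main obstacle'' you isolate --- replacing a general $k$-rectifiable $\Sigma$ by a product $\Sigma_1\t\Sigma_2$ --- simply does not arise.

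Your reverse direction is essentially the paper's argument and is fine, with one caveat: you invoke injectivity of $\j_{k_1,k_2}$ ``on normal chains'' and then try to justify that $A\restr S$ is normal. That justification is both unnecessary and dubious (restricting a normal chain to a rectifiable set need not be normal in general). The point is that $\j_{k_1,k_2}$ is injective on all of $\FF^G_{k,(k_1,k_2)}(\R^n)$ (\cite[Proposition~5.14]{GM_tfc}); since $A$ has finite mass, $A\restr S$ has finite mass and lies in $\FF^G_{k,(k_1,k_2)}(\R^n)$ (the vanishing of the other $\j_{k'_1,k'_2}$-components passes to restrictions by the same commutation), and injectivity there already gives $A=A\restr S$.
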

This result shows that the group of normal rectifiable $(k_1,k_2)$-chains is the right setting for the main result of~\cite{GM_tfc}.\medskip

Before addressing the general  case, let us list what is already known  about the  mappings $\j_{k_1,k_2}$  for extreme values of $k$, $n_1$ and $n_2$.\medskip

\noindent
($*$) If $k=0$ ($k_1=k_2=0$), the boundary of a $0$-chain and the partial boundaries of a $(0,0)$-chain vanish, moreover $D_0=\{(0,0)\}$, so we have the identities
\be\label{000}
\NN^G_{0,(0,0)}(\R^n)=\NN^G_0(\R^n)=\MM^G_0(\R^n)\qquad\text{ and }\qquad\NN^G_{0,0}(\R^n)=\MM^G_{0,0}(\R^n).
\ee
Hence, for $k=0$,~\eqref{j2}\&\eqref{j3} are the same and Theorem~\ref{thm_Ups_2} states that $\j_{0,0}$ defines an isomorphism from $\MM^G_0(\R^n)$ onto $\MM^G_{0,0}(\R^n)$. In fact we already know by~\cite[Theorem~5.6]{GM_tfc} that this isomorphism is an isometry from $(\MM^G_0(\R^n),\M)$ onto $(\MM^G_{0,0}(\R^n),\Mw)$.\medskip

\noindent
($*$) For the other extreme case $k=n$ ($k_1=n_1$ and $k_2=n_2$),  we have
\[
\FF^G_{n,(n_1,n_2)}(\R^n) =\FF^G_n(\R^n)=\MM^G_n(\R^n)\qquad\text{ and }\qquad\FF^G_{n_1,n_2}(\R^n)=\MM^G_{n_1,n_2}(\R^n),
\]
and we see that~\eqref{j1}\&\eqref{j2} are the same. Besides, these groups both identify isometrically with $L^1(\R^n,G)$ and performing these identifications, $\j_{n_1,n_2}$ is just the identity. We have the trivial fact that $\j_{n_1,n_2}\simeq \Id$ is an isomorphism from $\FF^G_n(\R^n)$ onto $\FF^G_{n_1,n_2}(\R^n)$ with moreover, for $A\in\FF^G_n(\R^n)$,
\be\label{M=M_n1n2}
\Fw(\j_{n_1,n_2}A)=\Mw(\j_{n_1,n_2}A)=\M(A)=\F(A).\medskip
\ee

\noindent
($*$) In the degenerate case $n_2=0$ (and symmetrically $n_1=0$), the definitions of the groups of flat chains and of tensor flat chains are the same. We have,
\[
\FF^G_{k,(k,0)}(\R^n)=\FF^G_k(\R^n)\st{\j_{k,0}=\Id}=\FF^G_{k,0}(\R^n)\qquad\text{when }n_2=0.
\] 

On the contrary, in all the other cases, the mappings~\eqref{j1}\&\eqref{j2} are not onto. Moreover, if $0<k<n$, the image of $\FF^G_{k,(k_1,k_2)}(\R^n)$ by $\j_{k_1,k_2}$ does not even cover the subgroup of rectifiable $(k_1,k_2)$-chains.
\begin{proposition}\label{prop_j_not_onto}
Assume that $G$ is not the trivial group and $n_1,n_2\ge1$.
\begin{enumerate}[(i)]
\item For $0<k<n$, there exist rectifiable tensor chains $A'\in\MM^G_{k_1,k_2}(\R^n)$ such that $A'\ne\j_{k_1,k_2}A$ for every chain $A\in\FF^G_{k,(k_1,k_2)}(\R^n)$.
\item There exist tensor chains $B'\in\FF^G_{0,0}(\R^n)$ such that $B'\ne\j_{0,0}B$ for every $B\in\FF^G_0(\R^n)$.
\end{enumerate}
\end{proposition}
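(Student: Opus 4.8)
In both cases the plan is to build an explicit tensor chain that cannot lie in the image, exploiting that $\j_{k_1,k_2}$ is Lipschitz but badly non co-Lipschitz: realizing a product $S\otimes T$ by a genuine chain $S\times T$ costs, on top of $\M(S)\M(T)$, internal boundary/transport terms that $\Fw$ ignores, and these can be made to pile up. The mechanism is controlled by two facts: $\j_{k_1,k_2}$ is one-to-one (see~\eqref{j1}) and it commutes with the restriction to a box, restriction being available for every flat chain. Hence if $\j_{k_1,k_2}A=B'$ with $B'=\sum_iB'_i$, $B'_i$ supported in a box $Q_i$ and the $Q_i$ pairwise far apart, then $\j_{k_1,k_2}(A\restr Q_i)=B'_i$; if moreover $B'_i=\j_{k_1,k_2}(A_i)$ for an \emph{explicit finite} chain $A_i$ supported in $Q_i$, injectivity pins $A\restr Q_i=A_i$, and choosing a box $Q_{(N)}\supset Q_1\cup\dots\cup Q_N$ meeting $\supp B'$ only there (so $A\restr Q_{(N)}=\sum_{i\le N}A_i$, again by injectivity) we get
\[
\F(A)\ \ges\ \F\big(A\restr Q_{(N)}\big)\ =\ \F\Big(\textstyle\sum_{i\le N}A_i\Big)\ \ges\ \sum_{i\le N}\F(A_i)\qquad(\forall\,N),
\]
using that restriction to a generic box is bounded and that $\F$ is, up to a constant, super-additive over far-apart pieces (localise the dual test forms). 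So it suffices to exhibit blocks with $\sum_i\Fw(B'_i)<\oo$ (so $B':=\sum_iB'_i\in\FF^G_{k_1,k_2}(\R^n)$), $\sum_i\F(A_i)=\oo$, and — for~(i) — $\sum_i\Mw(B'_i)<\oo$ with $B'$ rectifiable.

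\medskip\noindent\textbf{Part (ii).} Fix $g\in G\sm\{0\}$ and unit vectors $h\in\X_\a$, $v\in\X_\ova$. In pairwise far apart balls $Q_i$ put the four points $a_i,a_i+\tfrac1ih,a_i+\tfrac1iv,a_i+\tfrac1i(h+v)$ (with $|a_i-a_j|$ large), and let $A_i$ be the $G$-quadrupole with coefficients $+g,-g,-g,+g$ on them. Splitting coordinates, $\j_{0,0}A_i=(\lb p_i^1\rb-\lb p_i^2\rb)\otimes g(\lb q_i^1\rb-\lb q_i^2\rb)=:B'_i$ with $|p_i^1-p_i^2|=|q_i^1-q_i^2|=1/i$, so $\Fw(B'_i)\le\min(2,\tfrac1i)\cdot|g|\min(2,\tfrac1i)\le|g|/i^2$ and $B':=\sum_iB'_i\in\FF^G_{0,0}(\R^n)$. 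But $A_i$ has vanishing zeroth and first moments, so the transport lower bound for $\F$ on $0$-chains gives $\F(A_i)\ges|g|/i$; by the reduction above a preimage $A$ of $B'$ would satisfy $\F(A)\ges|g|\sum_{i\le N}1/i\to\oo$, impossible. Hence $B'\notin\j_{0,0}(\FF^G_0(\R^n))$.

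\medskip\noindent\textbf{Part (i).} Now use finite-mass rectifiable blocks. Fix $g\in G\sm\{0\}$ and, inside far apart boxes, take four parallel $(k_1,k_2)$-cells $T_i^1,\dots,T_i^4$ (each a product of a horizontal $k_1$-cell by a $g$-multiplicity vertical $k_2$-cell) placed at the vertices of a degenerate rectangle of transverse sizes $s_i,t_i$, obtained by translating the horizontal factor by $s_i$ and the vertical factor by $t_i$ along directions chosen so that the shifted cells still overlap (so that the differences of shifted unit cells have mass of order $s_i$, resp.\ $t_i$). Put $A_i:=T_i^1-T_i^2-T_i^3+T_i^4\in\FF^G_{k,(k_1,k_2)}(\R^n)$ and $B'_i:=\j_{k_1,k_2}A_i$; then $B'_i$ is rectifiable and $\Mw(B'_i),\Fw(B'_i)$ are of order $s_it_i$, whereas $\F(A_i)$ is only of order $\min(s_i,t_i)$ — the iterated difference of the cells leaves, after any filling attempt, unpaired boundary cells whose cheapest resolution costs the \emph{smaller} transverse scale, exactly what $\Fw$ cannot see. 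Taking $s_i=t_i=1/i$ yields $\sum_i\Mw(B'_i)<\oo$, so $B':=\sum_iB'_i$ is a rectifiable element of $\MM^G_{k_1,k_2}(\R^n)$, while $\sum_i\F(A_i)=\sum_i1/i=\oo$. If $A\in\FF^G_{k,(k_1,k_2)}(\R^n)$ had $\j_{k_1,k_2}A=B'$, then by the deformation theorem (Theorem~\ref{thm_defthm}) $A\restr Q_i$ is a flat limit of $(k_1,k_2)$-polyhedral chains, injectivity forces $A\restr Q_i=A_i$, and $\F(A)=\oo$, a contradiction. The hypothesis $0<k<n$ is precisely what makes a gap possible: for $k=n$ the cells are top-dimensional, $\F=\M$ on them, and $\F(A_i)$ and $\Mw(B'_i)$ are both of order $s_it_i$ — consistently with $\j_{n_1,n_2}\simeq\Id$ being onto; and $k=0$ fails because a difference of two shifted unit $0$-cells has mass $2$, not of the order of the scale, so no rectifiable finite-mass counterexample exists — this is why $(0,0)$ is separated only at the flat level, in~(ii).

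\medskip The real obstacle is the non-realizability, i.e.\ ruling out \emph{every} flat $A$, not just the candidate $\sum_iA_i$: this is what the package ``$\j_{k_1,k_2}$ injective $+$ commutes with restrictions $+$ deformation theorem'' delivers, by pinning a hypothetical preimage block by block. The quantitative side of~(i) is also delicate, because $\j_{k_1,k_2}$ is an isometry for the coordinate slicing mass, hence bi-Lipschitz for the mass, so no single-scale or finite example can separate $\MM^G_{k_1,k_2}(\R^n)$ from the image — one genuinely needs the multi-scale stack together with the estimate $\F(A_i)\gg\Mw(B'_i)$ produced by the uncancelled boundary.
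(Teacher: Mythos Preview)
Your approach to~(i) has a genuine gap that cannot be repaired within this strategy. You build $B'=\sum_iB'_i$ with each $B'_i=\j_{k_1,k_2}A_i$, $A_i\in\FF^G_{k,(k_1,k_2)}(\R^n)$, and you want $\sum_i\Mw(B'_i)<\oo$ together with $\sum_i\F(A_i)=\oo$. But this is impossible: by Theorem~\ref{thm_MSlj=MSl}(ii) and Theorem~\ref{thm_M_et_SlM} (which you yourself invoke at the end), for any $A_i\in\FF^G_{k,(k_1,k_2)}(\R^n)$,
\[
\F(A_i)\ \le\ \M(A_i)\ \le\ \M_{\Sl}(A_i)\ =\ \Mw_{\Sl}(B'_i)\ \le\ \sqrt{\tbinom{n_1}{k_1}\tbinom{n_2}{k_2}}\,\Mw(B'_i).
\]
Hence $\sum_i\Mw(B'_i)<\oo$ forces $\sum_i\M(A_i)<\oo$; since the supports are pairwise far apart the partial sums converge in mass, $A:=\sum_iA_i\in\FF^G_{k,(k_1,k_2)}(\R^n)$, and by continuity $\j_{k_1,k_2}A=B'$. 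So any block construction with blocks in the image and summable $\Mw$ automatically produces a preimage. Concretely, your claimed pair of estimates $\Mw(B'_i)\sim s_it_i$ and $\F(A_i)\sim\min(s_i,t_i)$ cannot coexist: if the translations are along the span of the cells then $\M(A_i)\sim s_it_i$, while if they are transverse then $\Mw(B'_i)$ is of order~$1$, not~$s_it_i$.

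The paper's proof of~(i) is entirely different and avoids this obstruction. It uses a \emph{single} explicit chain, not an infinite stack: with $n_1=n_2=1$, $Q'=gT$ the filled triangle, one takes $A'=\pt_1Q'$, which is a rectifiable $(0,1)$-chain supported on the hypotenuse $L$ together with a vertical edge. The obstruction is geometric: any $A\in\FF^G_{1,(0,1)}(\R^2)$ has $\j_{1,0}A=0$, hence its push-forward by the horizontal projection $\pi$ satisfies $\j_{0,0}(\pi\pf A)=\pi\pf(\j_{1,0}A)=0$, so $\pi\pf A=0$; but a chain supported on the diagonal $L$ is recovered from its horizontal projection via the inverse graph map, forcing $A=0$ and contradicting $\Mw_{\Sl}(A')>0$. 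The point is that the diagonal genuinely mixes the two factors --- no $(0,1)$-type classical chain can live on it --- and this has nothing to do with comparing $\F$ and $\Fw$ at many scales.

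For~(ii) your idea is at least not blocked by the mass isometry (you only need $\Fw$-summability), and the quadrupole blocks do satisfy $\Fw(B'_i)\lesssim|g|/i^2$ while $\F(A_i)\sim|g|/i$. However, the two steps you label as routine are not: the lower bound $\F(A_i)\ges|g|/i$ needs an argument valid for arbitrary coefficient groups (there is no pairing with Lipschitz test functions), and the ``super-additivity over far-apart pieces'' together with the control of $\F(A\srestr Q_{(N)})$ by $\F(A)$ must go through Theorem~\ref{thm_restr}, which only gives an $L^1$ bound over translates, so you must carefully arrange the boxes and average. The paper again takes a different, shorter route: it sets $B'=\pt_1\pt_2Q'$, shows $B'\ne0$ but is supported on~$L$, then uses Theorem~\ref{thm_supports} to force any preimage $B$ to be supported on~$L$, and finally computes $\chi$ of the coordinate slices of $B$ (via~\eqref{form_chiw_2}) to obtain $\M_{\Sl}(B)=0$, a contradiction.
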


\subsubsection*{Isometries}
Theorem~\ref{thm_Ups_2} does not say anything about the continuity of the inverse of~\eqref{j3}. 
On this matter, we expect that the identity 
\be\label{MtjM}
\Mw(\j_{k_1,k_2}A)=\Mt(A)\qquad\text{for every }A\in\FF^G_{k,(k_1,k_2)}(\R^n)
\ee
holds true, where $\Mt$ is the variant of $\M$ introduced in~\cite[Section~4.1]{GM_tfc} and obtained by substituting $\PP^G_{k_1,k_2}(\R^n)$ for $\PP^G_k(\R^n)$ in the definition of the mass. Denoting $\Nt(A):=\Mt(A)+\Mt(\pt A)$, we also expect that the identity
\be\label{NtjN}
\Nw(\j_{k_1,k_2}A)=\Nt(A)\qquad\text{for every }A\in\FF^G_{k,(k_1,k_2)}(\R^n)
\ee
holds, so that~\eqref{j3} would be an isometry from $(\NN^G_{k,(k_1,k_2)},\Nt)$ onto $(\NN^G_{k_1,k_2},\N)$.\\
Let us again consider the extreme values of $k$. When $k=n$, we have $\Mt=\M$ and we see from~\eqref{M=M_n1n2} that~\eqref{MtjM} holds true. For $k=0$ we also have $\Mt=\M$ and by~\cite[Theorem~5.6]{GM_tfc},~\eqref{MtjM} (and thus~\eqref{NtjN}) hold true for $k=k_1=k_2=0$. In the remaining cases, $0<k<n$, we are only able to establish the versions of~\eqref{MtjM}\&\eqref{NtjN} for the (coordinate) slicing masses $\M_{\Sl}$, $\Mw_{\Sl}$(Theorem~\ref{thm_MSlj=MSl} below). The definition of the slicing mass is postponed to Section~\ref{SD}. Let us say however that $\M_{\Sl}$ is equivalent to the usual mass. The precise result, established in the same section, is the following.

\begin{theorem}\label{thm_M_et_SlM} For $A\in\FF^G_k(\R^n)$ and $A'\in\FF^G_{k_1,k_2}(\R^n)$, there hold,
\[
\M(A)\le\M_{\Sl}(A)\le\sqrt{\binom nk}\,\M(A),\qquad\qquad\Mw(A')\le\Mw_{\Sl}(A')\le\sqrt{\binom{n_1}{k_1}\binom{n_2}{k_2}}\,\Mw(A').
\]
\end{theorem}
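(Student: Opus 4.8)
The plan is to prove the two inequality chains in parallel, since the tensor case is formally the same argument carried out over the index set $I^{n_1}_{k_1}\times I^{n_2}_{k_2}$ instead of $I^n_k$. I will first treat the lower bounds $\M(A)\le\M_{\Sl}(A)$ and $\Mw(A')\le\Mw_{\Sl}(A')$, which I expect to be the ``soft'' half. The coordinate slicing mass is defined (in Section~\ref{SD}) by slicing $A$ along the family of codimension-$k$ coordinate planes $\X_{\ov\g}(y)$, $\g\in I^n_k$, $y\in\X_\g$, and integrating the mass of the $0$-dimensional slices $\langle A,\pi_\g,y\rangle$ over $y$ and summing over $\g$. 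Since for polyhedral chains the slice along a generic coordinate plane recovers, for each $\g$, the ``$\g$-component'' of the chain (its projection onto the simple $k$-vector $e_\g$), the total over all $\g$ dominates the mass: for $P=\sum_j g_j\lb\sigma_j\rb$ one has $\sum_\g \int_{\X_\g}\M(\langle P,\pi_\g,y\rangle)\,dy = \sum_\g \sum_j |g_j|\,|\langle\vec\sigma_j,e_\g\rangle|\,\h^k(\sigma_j) \ge \sum_j |g_j|\,\h^k(\sigma_j)\big(\max_\g|\langle\vec\sigma_j,e_\g\rangle|\big)$, and because $\vec\sigma_j$ is a unit $k$-vector at least one coordinate component has modulus $\ge 1/\sqrt{\binom nk}$ — actually for the lower bound I only need $\sum_\g|\langle\vec\sigma_j,e_\g\rangle|\ge\max_\g\ge$ the fact that the Euclidean norm of the coordinate vector $(\langle\vec\sigma_j,e_\g\rangle)_\g$ is $1$, hence its $\ell^1$ norm is $\ge1$. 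Passing to the infimum over polyhedral approximations and using lower semicontinuity of each slice mass under flat convergence gives $\M(A)\le\M_{\Sl}(A)$, and the identical computation with $(e_\g)_{\g\in I^n_k}$ replaced by $(e_{\g_1}\owe e_{\g_2})$ gives the tensor version.

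For the upper bounds the key elementary fact is the reverse: for any simple unit $k$-vector $\tau$ (or decomposable $(k_1,k_2)$-vector) the coordinate component vector $c(\tau):=(\langle\tau,e_\g\rangle)_{\g\in I^n_k}$ satisfies $|c(\tau)|_{\ell^1}\le\sqrt{\binom nk}\,|c(\tau)|_{\ell^2}=\sqrt{\binom nk}$ by Cauchy–Schwarz, since $c(\tau)$ lives in $\R^{\binom nk}$ and a unit simple vector has $|c(\tau)|_{\ell^2}=1$. Therefore for a polyhedral chain $P$ one gets $\M_{\Sl}(P)=\sum_j|g_j|\h^k(\sigma_j)\,|c(\vec\sigma_j)|_{\ell^1}\le\sqrt{\binom nk}\,\M(P)$; the tensor bound uses instead $|c|_{\ell^2}\le 1$ on $\R^{\binom{n_1}{k_1}\binom{n_2}{k_2}}$ and Cauchy–Schwarz there, yielding the factor $\sqrt{\binom{n_1}{k_1}\binom{n_2}{k_2}}$. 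Then I extend from polyhedral to general flat chains by approximation: given $A$ with $\M(A)<\oo$ (the inequality is trivial otherwise), take $P_i\to A$ with $\M(P_i)\to\M(A)$, so $\M_{\Sl}(P_i)\le\sqrt{\binom nk}\,\M(P_i)$, and I need $\M_{\Sl}(A)\le\liminf\M_{\Sl}(P_i)$; this again follows from lower semicontinuity of $A\mapsto\M(\langle A,\pi_\g,y\rangle)$ along (almost every) slicing plane together with Fatou, as recorded in Section~\ref{SD}.

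I expect the main obstacle to be purely bookkeeping rather than conceptual: making sure that the definition of $\M_{\Sl}$ from Section~\ref{SD} really does evaluate, on a polyhedral chain, to $\sum_j|g_j|\h^k(\sigma_j)\sum_\g|\langle\vec\sigma_j,e_\g\rangle|$ — i.e.\ that the $0$-slice of an oriented $k$-simplex $\lb\sigma\rb$ by the orthogonal projection $\pi_\g:\R^n\to\X_\g$ has, for a.e.\ $y\in\X_\g$, mass equal to the point multiplicity of $\pi_\g|_\sigma$, and that this integrates against the Jacobian to $|\langle\vec\sigma,e_\g\rangle|\h^k(\sigma)$ (the area/coarea formula for the linear map $\pi_\g$ restricted to the $k$-plane carrying $\sigma$). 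Once this identity is in hand, both inequalities reduce to the linear-algebra statement $1\le\|c(\tau)\|_{\ell^1}\le\sqrt{N}$ for a unit simple $k$-vector $\tau$ with $N$ the number of coordinate $k$-planes, which is immediate. The lower semicontinuity ingredient needed for the extension beyond polyhedral chains is standard for flat chains (slicing commutes with flat convergence for a.e.\ slice) and I will simply cite it from the main paper or from White's slicing theory; no new difficulty arises there, and in particular the tensor case is handled by the same slicing apparatus applied in each factor, so no separate argument is needed.
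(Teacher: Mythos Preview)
Your upper-bound argument is correct and matches the paper's: reduce to a single polyhedral cell $gp$, compute $\M_{\Sl}(gp)=|g|_G\sum_\g\M(\pi_\g p)$, apply Cauchy--Schwarz and Cauchy--Binet to get the factor $\sqrt{\binom nk}$, then extend by lower semicontinuity of $\M_{\Sl}$ along a sequence $P_j\to A$ with $\M(P_j)\to\M(A)$. Good.

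The lower bound, however, has a real gap --- and you have the difficulty exactly backwards. You call $\M(A)\le\M_{\Sl}(A)$ the ``soft'' half, but the paper stresses in the introduction that this is the non-trivial direction, requiring the deformation theorem and the generalized restriction operator of Theorem~\ref{thm_restr}. Your polyhedral computation $\M(P)\le\M_{\Sl}(P)$ is fine, but your extension step ``passing to the infimum over polyhedral approximations and using lower semicontinuity'' does not close. Spell it out: from $\M(P_j)\le\M_{\Sl}(P_j)$ you want $\M(A)\le\M_{\Sl}(A)$. If you take $P_j\to A$ with $\M(P_j)\to\M(A)$, you get $\M(A)\le\liminf\M_{\Sl}(P_j)$, but lower semicontinuity of $\M_{\Sl}$ only gives $\M_{\Sl}(A)\le\liminf\M_{\Sl}(P_j)$ --- the wrong direction. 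What you would need is a sequence $P_j\to A$ with $\M_{\Sl}(P_j)\to\M_{\Sl}(A)$, and the paper remarks explicitly (Remark~\ref{rem_SlM=MC}(a)) that at this stage one does not know such a sequence exists; indeed for $2\le k\le n-2$ it is \emph{false} that $P_j\to A$, $\M(P_j)\to\M(A)$ forces $\M_{\Sl}(P_j)\to\M_{\Sl}(A)$.

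The paper's route is to introduce the coordinate mass $\M_{\CC}$ (built by approximation with chains whose faces lie in coordinate planes), for which $\M\le\M_{\CC}$ is immediate, and then to prove $\M_{\Sl}=\M_{\CC}$ (Proposition~\ref{prop_SlM=MC}). The inequality $\M_{\CC}(A)\le\M_{\Sl}(A)$ is the substantive step: one uses the deformation operator $\varPi^0_\eps$ together with the explicit coefficient formula~\eqref{formula_gprimeF}, extended beyond polyhedral chains via the restriction operator $\srestr$ of Theorem~\ref{thm_restr}, to bound $\int_{(0,1)^n}\M(\varPi^0_\eps\tau_{\eps z}A)\,dz\le\M_{\Sl}(A)$, and then lets $\eps\downarrow 0$. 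None of this machinery appears in your plan; without it the lower bound is open.
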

The proof of the inequalities $\M(A)\le\M_{\Sl}(A)$, $\Mw(A)\le\Mw_{\Sl}(A)$ is not immediate. For this we use the deformation theorems and Theorem~\ref{thm_restr} which extends to any chain or tensor chain the notion of restriction to any finite union of intervals (in fact the restrictions to almost all the translates of such set).\medskip

We can now state the advertised $\M_{\Sl}$-based isometry result.

\begin{theorem}\label{thm_MSlj=MSl}  For every $A\in\FF^G_k(\R^n)$ and every $(k_1,k_2)\in D_k$,
\begin{enumerate}[(i)]
\item~\qquad\qquad $\ds\sum_{(k'_1,k'_2)\in D_k} \Mw_{\Sl}(\j_{k'_1,k'_2}A)=\M_{\Sl}(A)$.
\item If $A\in\FF^G_{k,(k_1,k_2)}(\R^n)$, there hold
\[
\Mw_{\Sl}(\j_{k_1,k_2}A)=\M_{\Sl}(A)\qquad\text{ and }\qquad\Mw_{\Sl}(\pt_1\j_{k_1,k_2}A)+\Mw_{\Sl}(\pt_2\j_{k_1,k_2}A)=\M_{\Sl}(\pt A).
\] 
\end{enumerate}
\end{theorem}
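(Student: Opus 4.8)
The plan is to reduce everything to the study of $0$-slices of the relevant chains along coordinate planes, since by the definition of $\M_\Sl$ (to be given in Section~\ref{SD}) the coordinate slicing mass of a $k$-chain is precisely an integral, over all coordinate planes $\X_{\ov\g}$ with $\g\in I^n_k$, of $\M(\la A,\X_{\ov\g},x\ra)$ integrated in $x\in\X_\g$. First I would recall (or establish as a preliminary) that slicing commutes with $\j$: for $\g=(\g_1,\g_2)$ with $\g_1\subset\a$, $\g_2\subset\ova$, the $0$-slice of $\j_{k'_1,k'_2}A$ along $\X_{\ov\g}$ equals, up to the obvious identification, $\j_{0,0}$ applied to the $0$-slice of the $(|\g_1|,|\g_2|)$-component of $A$; and crucially, this slice vanishes unless $(|\g_1|,|\g_2|)=(k'_1,k'_2)$, because a coordinate plane of the wrong codimension type simply does not ``see'' that component. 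This is the structural fact that makes the sum in (i) telescope into a partition of the index set $I^n_k$ according to the splitting $\g=\g_1\sqcup\g_2$.

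For part (i), the key step is then: for each $\g\in I^n_k$, write $\g=\g_1\sqcup\g_2$ with $\g_1=\g\cap\a$, $\g_2=\g\cap\ova$, set $k'_1=|\g_1|$, $k'_2=|\g_2|$ so that $(k'_1,k'_2)\in D_k$, and observe that the contribution of the plane family indexed by $\g$ to $\M_\Sl(A)$ equals the contribution of the corresponding plane family to $\Mw_\Sl(\j_{k'_1,k'_2}A)$. Summing over $\g\in I^n_k$ and regrouping by the value of $(k'_1,k'_2)$ gives (i). The only thing to check carefully here is that on $0$-chains the map $\j_{0,0}$ is an isometry for mass — but this is exactly \cite[Theorem~5.6]{GM_tfc}, cited in the excerpt as~\eqref{000}, so the slice-by-slice identity $\M(\la A,\X_{\ov\g},x\ra)=\Mw(\la \j_{k'_1,k'_2}A,\X_{\ov\g},x\ra)$ holds pointwise in $x$, and then one integrates.

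Part (ii) is then almost immediate from (i): if $A\in\FF^G_{k,(k_1,k_2)}(\R^n)$ then $\j_{k'_1,k'_2}A=0$ for all $(k'_1,k'_2)\ne(k_1,k_2)$ by the very definition of $\FF^G_{k,(k_1,k_2)}$, so the sum in (i) has a single nonzero term and gives $\Mw_\Sl(\j_{k_1,k_2}A)=\M_\Sl(A)$. For the boundary identity, I would apply the mass statement of (i)--(ii) to $\pt A$ in place of $A$: since $\pt A\in\FF^G_{k-1}(\R^n)$ and, because $\j$ intertwines $\pt$ with $\pt_1+\pt_2$ (and $\j_{k_1-1,k_2}(\pt A)=\pt_1\j_{k_1,k_2}A$, $\j_{k_1,k_2-1}(\pt A)=\pt_2\j_{k_1,k_2}A$, while the other components of $\pt A$ vanish because $A$ lives in $\FF^G_{k,(k_1,k_2)}$), the general identity (i) applied to $\pt A$ reads $\Mw_\Sl(\pt_1\j_{k_1,k_2}A)+\Mw_\Sl(\pt_2\j_{k_1,k_2}A)=\M_\Sl(\pt A)$, which is exactly the second claim.

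The main obstacle, and where I would spend the most care, is the commutation of coordinate slicing with $\j$ together with the vanishing of ``wrong-type'' slices. One has to make sense of $0$-slices of flat chains that may not have finite mass (slicing is only \emph{a.e.}\ defined in $x$), track the identifications between $\X_{\ov\g}(x)$, $\R^{|\g_1|}\times\R^{|\g_2|}$ and the ambient splitting, and verify that under $\j$ a $(k'_1,k'_2)$-component restricted to a plane whose horizontal/vertical codimensions do not match $(k'_1,k'_2)$ produces the zero slice — intuitively clear from the description of $\j$ recalled in the introduction, but it needs a clean argument, presumably by first checking it on polyhedral tensor chains and passing to the limit using continuity of $\j$ and of slicing in the flat norm, exactly as the slicing mass is set up in Section~\ref{SD}. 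Once that structural lemma is in place, the rest is bookkeeping over $I^n_k$.
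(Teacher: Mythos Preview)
Your proposal is correct and follows essentially the same route as the paper: use the commutation of coordinate slicing with $\j$ (\cite[Proposition~4.13]{GM_tfc}) to get $(\j_{k'_1,k'_2}A)\cap\X_{\ov\g}(x)=\j_{0,0}(A\cap\X_{\ov\g}(x))$ for $\g$ with $(|\g^1|,|\g^2|)=(k'_1,k'_2)$, apply the $\M$--$\Mw$ isometry for $\j_{0,0}$ on $0$-chains (\cite[Theorem~5.6]{GM_tfc}), integrate, and sum; part~(ii) then follows from~(i) exactly as you say, invoking \cite[Proposition~4.9]{GM_tfc} for the boundary relation.

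One remark: the ``main obstacle'' you flag --- the vanishing of wrong-type slices --- is not actually needed. By definition, $\Mw_{\Sl}$ of a $(k'_1,k'_2)$-chain already sums only over those $\g\in I^n_k$ with $(|\g^1|,|\g^2|)=(k'_1,k'_2)$, so the partition of $I^n_k$ according to $(|\g^1|,|\g^2|)$ is built into the definitions and no separate vanishing argument is required. The commutation identity itself is quoted directly from the companion paper rather than re-proved by approximation, so the whole proof is short bookkeeping once those two cited facts are in hand.
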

Notice that the first identity in $(ii)$ is a direct consequence of $(i)$. Let us define the $\N$-like norms built on the slicing masses.  For chains $A$ and tensor chains $A'$, we set:
\[
\N_{\Sl}(A):=\M_{\Sl}(A)+\M_{\Sl}(\pt A)\qquad\text{ and }\qquad\Nw_{\Sl}(A'):=\Mw_{\Sl}(A')+\Mw_{\Sl}(\pt_1 A')+\Mw_{\Sl}(\pt_2 A').
\] 
By Theorem~\ref{thm_M_et_SlM}, these functions are norms, equivalent to the $\N$-norm in $\NN^G_k(\R^n)$ and to the $\Nw$-norm in $\NN^G_{k_1,k_2}(\R^n)$ respectively. As the second point of Theorem~\ref{thm_MSlj=MSl} implies that $\Nw_{\Sl}(\j_{k_1,k_2}A)=\N_{\Sl}(A)$ for $A\in\FF^G_{k,(k_1,k_2)}(\R^n)$, we obtain the following improvement of Theorem~\ref{thm_Ups_2}.
\begin{corollary}\label{coro_MSlj=MSl}~\\
The isomorphism~\eqref{j3} is an isometry from $(\NN^G_{k,(k_1,k_2)}(\R^n),\N_{\Sl})$ onto $(\NN^G_{k_1,k_2}(\R^n),\Nw_{\Sl})$.
\end{corollary}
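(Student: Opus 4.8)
The plan is to read off Corollary~\ref{coro_MSlj=MSl} from Theorem~\ref{thm_Ups_2} together with the second point of Theorem~\ref{thm_MSlj=MSl}; once these are available the statement is essentially bookkeeping, all the substance having been spent in Theorems~\ref{thm_M_et_SlM} and~\ref{thm_MSlj=MSl} (and, upstream, in the deformation theorem and in Theorem~\ref{thm_restr}).

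First I would record that $\N_{\Sl}$ and $\Nw_{\Sl}$ are indeed norms on $\NN^G_{k,(k_1,k_2)}(\R^n)$ and $\NN^G_{k_1,k_2}(\R^n)$ respectively, these being exactly the subgroups on which they are finite. This is contained in Theorem~\ref{thm_M_et_SlM}, which sandwiches the slicing masses between dimensional multiples of the usual masses, so that $\N_{\Sl}$ is equivalent to $\N$ on $\NN^G_k(\R^n)$ and $\Nw_{\Sl}$ to $\Nw$ on $\NN^G_{k_1,k_2}(\R^n)$. Consequently both sides of the claimed isometry are well-defined normed groups with the same underlying sets as those appearing in Theorem~\ref{thm_Ups_2}, and that theorem already provides the group isomorphism between them.

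It then remains only to check that $\j_{k_1,k_2}$ preserves the two norms. I would take an arbitrary $A\in\NN^G_{k,(k_1,k_2)}(\R^n)\subset\FF^G_{k,(k_1,k_2)}(\R^n)$ and apply Theorem~\ref{thm_MSlj=MSl}$(ii)$, which gives at once $\Mw_{\Sl}(\j_{k_1,k_2}A)=\M_{\Sl}(A)$ and $\Mw_{\Sl}(\pt_1\j_{k_1,k_2}A)+\Mw_{\Sl}(\pt_2\j_{k_1,k_2}A)=\M_{\Sl}(\pt A)$. Adding these identities and unfolding the definitions of $\N_{\Sl}$ and $\Nw_{\Sl}$ yields $\Nw_{\Sl}(\j_{k_1,k_2}A)=\N_{\Sl}(A)$. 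Since $\j_{k_1,k_2}$ is onto $\NN^G_{k_1,k_2}(\R^n)$ by Theorem~\ref{thm_Ups_2}, this exhibits it as an isometric group isomorphism; in particular its inverse is continuous, which is the point Theorem~\ref{thm_Ups_2} left open.

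I do not expect a genuine obstacle here. The only thing worth spelling out is that one is entitled to equip the domain $\NN^G_{k,(k_1,k_2)}(\R^n)$ with $\N_{\Sl}$ rather than the ambient $\N$: this is legitimate precisely because the two are equivalent there, so the underlying group is unchanged. Everything delicate — notably the lower bounds $\M\le\M_{\Sl}$, $\Mw\le\Mw_{\Sl}$ of Theorem~\ref{thm_M_et_SlM} and the slicing-mass identities of Theorem~\ref{thm_MSlj=MSl} — has already been dealt with before this corollary.
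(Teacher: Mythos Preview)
Your argument is correct and matches the paper's own justification: it uses Theorem~\ref{thm_M_et_SlM} to see that $\N_{\Sl}$ and $\Nw_{\Sl}$ are genuine (equivalent) norms on the relevant groups, invokes Theorem~\ref{thm_Ups_2} for the isomorphism, and adds the two identities of Theorem~\ref{thm_MSlj=MSl}(ii) to obtain $\Nw_{\Sl}(\j_{k_1,k_2}A)=\N_{\Sl}(A)$. There is nothing to add.
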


\subsubsection*{Organization of the article}
\begin{enumerate}[($*$)]
\item  In Section~\ref{SB}, we start by recalling White's deformation theorem and then state the deformation theorem for tensor chains, Theorem~\ref{thm_defthm}. This result leads to a proof of Theorem~\ref{thm_Ups_2} (which states that~\eqref{j3} is onto).\smallskip

\noindent
The explicit constructions leading to the negative results of Proposition~\ref{prop_j_not_onto} are postponed to Subsection~\ref{Ss_ctrex} at the end of the article (the slicing mass is used in the proof).  
\item Section~\ref{SC} is dedicated to the proof of Theorem~\ref{thm_defthm}.
\item In Section~\ref{SD} we state and prove Theorem~\ref{thm_restr}. We also deduce Theorem~\ref{thm_supports} which states that for a chain $A$, if all the components of $\j A$ are supported in some closed set, then $A$ is also supported in this set.
\item In Section~\ref{SE}, we define the coordinate slicing mass and then prove Theorems~\ref{thm_M_et_SlM}\&\ref{thm_MSlj=MSl}. As already mentioned, the proof of Proposition~\ref{prop_j_not_onto} ends the article.
\end{enumerate}
Since the deformation theorem for tensor chains is not the main point of this note, its proof in Section~\ref{SC} may be skipped at first.

\section{The deformation theorem for tensor chains}
\label{SB}

\subsection{The deformation theorem for $k$-chains}
\label{SB1}
Before stating its generalization, let us recall the deformation theorem of~\cite{White1999-1} in the classical setting. First we introduce some notation about grids and provide some reminders about the operator $\chi$ and the notion of support of a chain.\medskip

\subsubsection*{Grids}
Let $\eps>0$.
\begin{enumerate}[(1)] 
\item We denote by $\GG^\eps$ the regular grid associated to the lattice $\eps\Z^n$. 
\item  For $y\in\R^n$,  $\tau_y$ is the translation defined by $\tau_y x=y+x$. The push-forward by $\tau_y$ of a flat chain or of a tensor flat chain $A$ in $\R^n$ is denoted $\tau_y A$.
\item The grid obtained by translating $\GG^\eps$ by the vector $(\eps/2) (1,1,\dots,1)$ (that is a half step in all directions) is called the dual grid of $\GG^\eps$ and is denoted ${\wh\GG}^\eps$. 
\item The $k$-faces of $\GG^\eps$ are oriented arbitrarily. Given a $k$-face $F$ with orientation $\x$, its dual face ${\wh F}$ is the $(n-k)$-face of ${\wh\GG}^\eps$ with same center and orientation $\wh \x$ such that $\x\we\wh\x=e_1\we e_2\we\dots\we e_n$. We denote $\la
\wh F\ra$ the affine $(n-k)$-plane spanned by $\wh F$.
 \end{enumerate}
 
\subsubsection*{The operator $\chi$}
Let us recall~\cite[Theorem~2.1]{White1999-2} that was restated  as~\cite[Theorem~5.1]{GM_tfc}. There exists a group morphism $\chi:\FF^G_0(\R^n)\to G$ such that,
\begin{enumerate}[(i)]
\item $\chi(\sum g_i\lb x_i\rb)=\sum g_i$, for any finite sequences $g_i\in G$ and $x_i\in\R^n$.
\item $|\chi (A)|_G\le\F(A)$ for every $A\in\FF^G_0(\R^n)$.
\end{enumerate}
 
\subsubsection*{Supports}
Eventually, we also recall that a chain $A$ (resp. a tensor chain $A'$) is supported in a closed set $S\sub\R^n$ if for any open neighborhood $U$ of $S$, there exists a sequence of polyhedral chains $Q_j$ (resp. of polyhedral tensor chains $Q'_j$) all supported in $U$ such that $Q_j\to A$ (resp. $Q'_j\to A'$).\medskip

We can now state the theorem (in fact only the parts that are relevant for our purpose).

\begin{theorem}[{\cite[Theorem~1.1]{White1999-1}}]\label{thm_defWhite} Let $\eps>0$, there exist two families of operators 
\[
P=P_\eps:\FF^G_k(\R^n)\longto \FF^G_k(\R^n),\qquad\qquad H=H_\eps:\FF^G_k(\R^n)\longto \FF^G_{k+1}(\R^n),
\] 
such that the following properties hold for $A,B\in \FF^G_k(\R^n)$ and almost every $y\in\R^n$.
\begin{enumerate}[(i)]
\item[(0)] The two following mappings are Lipschitz continuous group morphisms.
\[
\begin{array}{rcl}
\FF^G_k(\R^n)&\longto&L^1([0,\eps)^n,\FF^G_k(\R^n)),\\
C&\longmapsto&\qquad z\mapsto P(\tau_z C),
\end{array}
\qquad\quad
\begin{array}{rcl}
\FF^G_k(\R^n)&\longto&L^1([0,\eps)^n,\FF^G_{k+1}(\R^n)),\\
C&\longmapsto&\qquad z\mapsto H(\tau_z C).
\end{array}
\]
Moreover $z\mapsto \tau_{-z}P(\tau_z A)$ and $z\mapsto \tau_{-z}H(\tau_z A)$ are $\eps$-periodic along all the coordinate axes.
\item We have the identity $\pt P(\tau_y A)=P(\pt \tau_yA)$.
\item $\tau_yA=P(\tau_y A)+\pt H(\tau_y A)+H(\pt \tau_y A)$.
\item $P(\tau_y A)$ writes as $\sum g_F(y) F$ where the sum runs over the $k$-faces of $\GG^\eps$ and $g_F(y)\in G$ with $\sum|g_F(y)|_G<\oo$. Moreover, when $A$ is a polyhedral chain, 
\be\label{formula_gF}
g_F(y)=\chi\lt([(\tau_y A)\cap \la\wh F\ra]\restr \wh F\rt).
\ee
\item There exist a constant $c(n)\ge1$ only depending on $n$ such that, 
\begin{equation}\label{estimmassdefo}
\int_{(0,1)^n}\M\lt(P (\tau_{\eps z} A)\rt)\,dz\le c(n) \M(A),\qquad\qquad\int_{(0,1)^n}\M\lt(H (\tau_{\eps z} A)\rt)\,dz\le\eps c(n)\M(A).
\end{equation}
\item If $A$ is a polyhedral chain then $H(\tau_yA)$ is a polyhedral chain.
\item If $A$ is supported in $S$ then $P(\tau_yA)$ and $H(\tau_yA)$ are supported in $(S+y)+[-\eps/2,\eps/2]^n$.
\end{enumerate}
\end{theorem}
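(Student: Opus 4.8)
The plan is to prove the theorem first for polyhedral chains, where $P=P_\eps$ and $H=H_\eps$ can be written down by the Federer--Fleming cone construction, and then to extend all the assertions to general flat chains by density and continuity in the flat norm. For a polyhedral $k$-chain $A$ one deforms $\tau_yA$ across the skeleta of $\GG^\eps$ in decreasing dimension $j=n,n-1,\dots,k+1$: at the stage treating the $j$-faces, the current chain is a polyhedral $k$-chain supported on the $j$-skeleton, and inside each closed $j$-cell $Q$, with centre $p_Q$ (the common centre of $Q$ and $\wh Q$), one replaces the part of that chain lying in $\Int Q$ by its radial projection onto $\pt Q$ from $p_Q$, which is licit as soon as $p_Q$ avoids the current support---and this holds for a.e.\ $y$, the only use of ``almost every $y$''. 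The $k$-chain surviving on the $k$-skeleton after the last stage is $P(\tau_yA)$, and the sum over all cells and all stages of the homotopy (``cone'') chains joining each piece to its projection is $H(\tau_yA)$. Polyhedrality of $H(\tau_yA)$ (property (v)) and its being supported near $\supp(\tau_yA)$ (property (vi)) are then immediate, since every step is local to a single cell of $\GG^\eps$; the periodicity in~(0) is clear because $P$ and $H$ are defined with respect to the fixed grid and hence commute with translations by the lattice $\eps\Z^n$.

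The homotopy formula $\tau_yA=P(\tau_yA)+\pt H(\tau_yA)+H(\pt\tau_yA)$ (property (ii)) is obtained by writing, in each cell and at each stage, the elementary identity expressing a chain as its radial projection plus the boundary of a cone plus the cone over its boundary, and telescoping these identities over the stages, using that the cones produced at stage $j$ already live on the $(j-1)$-skeleton; the same bookkeeping delivers $\pt P(\tau_yA)=P(\pt\tau_yA)$ (property (i)). For the coefficient formula~\eqref{formula_gF} one tracks, for polyhedral $A$, which portions of $\tau_yA$ are carried onto a given $k$-face $F$ by the successive radial projections; this reduces to cutting $\tau_yA$ with the dual plane $\la\wh F\ra$ to produce a $0$-chain, restricting it to the dual face $\wh F$, and applying $\chi$, which records the resulting coefficient in $G$ additively and with the correct signs.

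For the mass bounds~\eqref{estimmassdefo} the idea is to average in $y$. The $k$-dimensional Jacobian of a radial projection from a point $p$ inside an $\eps$-cube, along a $k$-plane through $x$, is $\les(\eps/|x-p|)^{k}$, so a projected chain $T$ has mass $\les\eps^{k}\int|x-p|^{-k}\,d\|T\|(x)$; averaging over $y\in[0,\eps)^n$ and changing variables turns the inner integral into an integral of $|z|^{-k}$ over an $n$-cube of side $\eps$, which is $\les\eps^{\,n-k}$ as soon as $k\le n-1$ (the case $k=n$ being trivial, no projection being needed there). The factors multiply to an averaged bound $\les\M(T)$ with a dimensional constant; combining these single-cell estimates over all cells and over the finitely many stages---a Fubini argument on the composition of the projections---gives the first inequality in~\eqref{estimmassdefo}. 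A cone chain, by contrast, needs no averaging: its apex lies within distance $\les\eps$ of its base, so it has mass $\les\eps\,\M(T)$ outright, and summing over cells and stages produces the extra factor $\eps$ in the second inequality. That the resulting averaged estimate is exactly $\int_{(0,1)^n}\M(P(\tau_{\eps z}A))\,dz\le c(n)\M(A)$ with a constant depending only on $n$ is Simon's refinement: it is essential that the integration variable is the grid translation, not $\eps$.

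I expect the genuinely delicate parts to be the iterated averaged mass estimate just described and, above all, the passage from polyhedral to arbitrary $A\in\FF^G_k(\R^n)$, for which slicing and coning are not defined a priori. For the latter, the route is to prove that the polyhedral operators $C\mapsto\bigl(z\mapsto P(\tau_zC)\bigr)$ and $C\mapsto\bigl(z\mapsto H(\tau_zC)\bigr)$ are Lipschitz from $\FF^G_k(\R^n)$ into $L^1([0,\eps)^n,\FF^G_\bullet(\R^n))$: writing a polyhedral chain of small flat norm as $\pt R+S$ with $\M(R)+\M(S)$ small, the summand $S$ is controlled by~\eqref{estimmassdefo}, while the summand $\pt R$ is handled through the homotopy formula since $P(\pt R)=\pt P(R)$ and $H(\pt R)$ occurs together with $\pt H(\cdot)$; the pointwise bound $|\chi(\,\cdot\,)|_G\le\F(\,\cdot\,)$ keeps the $L^1_z$-sum of the face coefficients of $P(\tau_zC)$ under control by $\F(C)$. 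Density of polyhedral chains then extends the operators uniquely and continuously, and properties (0) and (i)--(vi) pass to the limit because each is a closed condition; only~\eqref{formula_gF} genuinely requires $A$ polyhedral, whence its restriction to that case. The locality used for (vi) and the sign bookkeeping in the telescoped homotopy formula are the portions that must be carried out with the most care.
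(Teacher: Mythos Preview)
The paper does not prove this theorem: it is quoted verbatim as \cite[Theorem~1.1]{White1999-1} and used as a black box. The only argument the paper supplies in this vicinity is the short proof of the flat-norm estimate~\eqref{coro_def_White_estim_Av_HA} for $H_\eps$ (the ``integrate by parts'' trick using the homotopy identity), which is precisely the step you single out as delicate in your last paragraph. So there is no ``paper's own proof'' to compare against; your proposal is rather a sketch of White's original argument.

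As such a sketch it is essentially correct, with one inaccuracy worth flagging. You write that the cone chains ``need no averaging'' because the apex is within $\lesssim\eps$ of the base. The $\eps$-factor indeed comes from the cone height, but the base of the cone at stage $j$ is a piece of the intermediate chain $T_j$, whose mass is only controlled \emph{in average} by $\M(\tau_yA)$ through the previous projection steps. Hence the bound $\int_{(0,1)^n}\M(H(\tau_{\eps z}A))\,dz\le\eps\,c(n)\M(A)$ does require the same averaging as the bound for $P$; what distinguishes it is only the extra factor $\eps$ from the cone. Apart from this, your outline of the successive radial projections, the telescoped homotopy formula, the slicing interpretation of the coefficients~\eqref{formula_gF}, and the density extension to general flat chains matches the standard Federer--Fleming--Simon--White construction.
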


\begin{remark}\label{rem_point(0)}~
The point (0) is not explicit in~\cite{White1999-1} but follows from the construction. The fact that $y\mapsto P(\tau_y A)$ and $y\mapsto H(\tau_y A)$ belong to $L^1\loc$ allows us to give a meaning to the integrals of point (iv).
\end{remark}

\begin{remark}\label{rmk_morphismPH}
Let $(G^a,+,|\cd|_{G^a})$ and $(G^b,+,|\cd|_{G^b})$ be two complete Abelian normed groups and let $\phi:G^a\to G^b$ be a Lipschitz continuous group morphism. As described before Proposition~3.9 in~\cite{GM_tfc}, for $\b\in I^n$ such morphism induces a continuous morphism of chain complexes, 
\[
\phi_* :\FF_*(\X_\b,G^a)\longto \FF_*(\X_\b,G^b).
\]
We have the following identities, for every $\eps>0$ and every $A\in\FF_k(\R^n,G^a)$,
\begin{align*}
P_\eps(\phi_* \tau_y A)&= \phi_* [P_\eps(\tau_y A)]\qquad\text{in }L\loc^1\lt(\R^n,\FF_k(\R^n,G^b)\rt),\\
 H_\eps(\phi_* \tau_y A)&= \phi_* [H_\eps(\tau_y A)]\qquad\text{in }L\loc^1\lt(\R^n,\FF_k(\R^n,G^b)\rt).
\end{align*}
Indeed, the constructions of~\cite{White1999-1} do not depend on the particular group $G$. 
\end{remark}

In the proof of the deformation theorem, the following estimates are used, first with polyhedral chains. There hold for every $A\in \FF^G_k(\R^n)$ and $\eps>0$,
\begin{align}
\label{coro_def_White_estim_Av_PA} 
\int_{(0,1)^n}\F(P_\eps\tau_{\eps z}A)\,dz &\le c(n)\F(A),\\ 
\label{coro_def_White_estim_Av_HA} 
\int_{(0,1)^n}\F(H_\eps\tau_{\eps z}A)\,dz&\le c'(n,\eps)  \F(A).
\end{align}
These estimates yield the extension by continuity of the operators $P_\eps$ and $H_\eps$ (first defined on polyhedral chains). Since $P_\eps$ commutes with the boundary operator,~\eqref{coro_def_White_estim_Av_PA} is a direct consequence of the first estimate of Theorem~\ref{thm_defWhite}(iv) with the same constant.  On the contrary, $H_\eps$ does not commute with $\pt$ and the proof of~\eqref{coro_def_White_estim_Av_HA} involves a small trick which is not given in~\cite{White1999-1}.  Since we use~\eqref{coro_def_White_estim_Av_HA} in the proof of the ``tensor chain'' version of the deformation theorem, it is appropriate to present it here.

\begin{proof}[Proof of~\eqref{coro_def_White_estim_Av_HA}]
Let $\eps>0$ and let $Q\in \PP^G_k(\R^n)$ with a decomposition $Q=R+\pt S$,  $R\in\PP^G_k(\R^n)$, $S\in\PP^G_{k+1}(\R^n)$. We have for almost $y\in\R^n$,
\[
H_\eps(\tau_y Q)=H_\eps(\tau_y R)+H_\eps(\pt \tau_y S).
\] 
We bound the first term directly in average by using the second estimate of Theorem~\ref{thm_defWhite}(iv):
\be\label{coro_def_White_estim_Av_HA_1}
\int_{(0,1)^n}\F(H_\eps (\tau_{\eps z} R))\,dz\le\int_{(0,1)^n}\M(H_\eps (\tau_{\eps z} R))\,dz\le\eps c(n)\M(R).
\ee
For the second term, we face the issue  that in general $H_\eps\pt\ne\pt H_\eps$. The trick consists in ``integrating by parts''. Applying the identity  of Theorem~\ref{thm_defWhite}(ii) to $S$, we write
\[
H_\eps(\pt \tau_y S)=\tau_y S-P_\eps(\tau_y S)-\pt H_\eps(\tau_y S),
\]
which leads to
\[
\F(H_\eps(\pt \tau_y S))\le\M(S)+\M(P_\eps(\tau_y S))+\M (H_\eps(\tau_y S)).
\] 
Using the two estimates of Theorem~\ref{thm_defWhite}(iv), we obtain,
\be\label{coro_def_White_estim_Av_HA_2}
\int_{(0,1)^n}\F(H_\eps(\pt \tau_{\eps z} S))\,dz\le   (1+c(n)+\eps c(n))\M(S).
\ee
Summing~\eqref{coro_def_White_estim_Av_HA_1} and~\eqref{coro_def_White_estim_Av_HA_2}  and optimizing over the decompositions of $Q$, we get~\eqref{coro_def_White_estim_Av_HA} for polyhedral chains and with $c'(n,\eps)=1+c(n)+\eps c(n)$. We conclude with a density argument.
\end{proof}
Remark that for $\eps\le1$ we can take substitute for $c'(n,\eps)$ in~\eqref{coro_def_White_estim_Av_HA} the constant $c'(n):=2c(n)+1$ which does not depend on $\eps$.\medskip

Let us  recall the following essential result.
\begin{corollary}[{\cite[Corollary~1.2]{White1999-1}}]
\label{coro_defWhite}
There holds, for every $A\in\FF^G_k(\R^n)$,
\[
\int_{(0,1)^n}\F(P_\eps\tau_{\eps z} A-A)\,dz\ \st{\eps\dw0}\longto\ 0.
\]
\end{corollary}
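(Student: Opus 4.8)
The plan is to reduce the statement for general flat chains to the case of polyhedral chains, for which a direct estimate is available, and then to pass to the limit by density using the uniform bounds~\eqref{coro_def_White_estim_Av_PA} and~\eqref{coro_def_White_estim_Av_HA}. First I would record the elementary deformation identity from Theorem~\ref{thm_defWhite}(ii): for a polyhedral chain $Q$ and almost every $y$,
\[
\tau_y Q - P_\eps(\tau_y Q) = \pt H_\eps(\tau_y Q) + H_\eps(\pt \tau_y Q).
\]
Translating back by $\tau_{-y}$ and using the $\eps$-periodicity from Theorem~\ref{thm_defWhite}(0), one gets a bound on $\int_{(0,1)^n}\F\big(P_\eps\tau_{\eps z}Q - \tau_{\eps z}Q\big)\,dz$; here one also needs the (standard, and implicit in White) fact that $\int_{(0,1)^n}\F(\tau_{\eps z}Q - Q)\,dz \to 0$ as $\eps\downarrow 0$, which follows from continuity of translation in the flat norm on a fixed polyhedral chain. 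For the right-hand side one estimates $\F(\pt H_\eps(\tau_{\eps z}Q))$ and $\F(H_\eps(\pt\tau_{\eps z}Q))$ directly in terms of the mass of the finitely many faces of $Q$ and their boundaries, using that $H_\eps$ shrinks mass by a factor $\eps c(n)$ (Theorem~\ref{thm_defWhite}(iv)); both contributions are $O(\eps)$ and hence tend to $0$. This establishes the corollary for every fixed polyhedral chain $Q$.

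For a general $A\in\FF^G_k(\R^n)$, fix $\delta>0$ and choose a polyhedral chain $Q$ with $\F(A-Q)<\delta$. Then split
\[
\int_{(0,1)^n}\F(P_\eps\tau_{\eps z}A - A)\,dz
\le \int \F\big(P_\eps\tau_{\eps z}(A-Q)\big)\,dz
+ \int \F\big(P_\eps\tau_{\eps z}Q - Q\big)\,dz
+ \F(Q-A).
\]
The first term is $\le c(n)\F(A-Q) < c(n)\delta$ by~\eqref{coro_def_White_estim_Av_PA}; the third is $<\delta$; and the middle term tends to $0$ as $\eps\downarrow0$ by the polyhedral case. Taking $\limsup_{\eps\downarrow0}$ and then letting $\delta\downarrow0$ finishes the proof.

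The only genuinely delicate point is the polyhedral estimate of the ``swept'' term, i.e.\ controlling $\int_{(0,1)^n}\F\big(\pt H_\eps(\tau_{\eps z}Q) + H_\eps(\pt\tau_{\eps z}Q)\big)\,dz$ by $O(\eps)$. One cannot simply bound $\F(\pt H_\eps(\cdot))$ by $\M(\pt H_\eps(\cdot))$ and use~\eqref{estimmassdefo}, since $H_\eps$ does not commute with $\pt$ and $\pt H_\eps$ carries no small factor; instead one writes $\pt H_\eps(\tau_{\eps z}Q) = \tau_{\eps z}Q - P_\eps(\tau_{\eps z}Q) - H_\eps(\pt\tau_{\eps z}Q)$ and bounds in the flat norm, or — more cleanly — applies the already-proved estimate~\eqref{coro_def_White_estim_Av_HA} to $\pt Q$ (so $H_\eps(\pt\tau_{\eps z}Q)$ contributes $\le c'(n)\F(\pt Q)$, which is $O(1)$, not $O(\eps)$, hence this route alone is insufficient for a $\to 0$ conclusion unless recombined with the $\tau_{\eps z}Q\to Q$ term). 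The correct bookkeeping is therefore: group $H_\eps(\pt\tau_{\eps z}Q)+\pt H_\eps(\tau_{\eps z}Q) = \tau_{\eps z}Q - P_\eps(\tau_{\eps z}Q)$, so that the whole identity collapses and what one actually has to prove is just $\int\F(P_\eps\tau_{\eps z}Q - \tau_{\eps z}Q)\,dz\to 0$; this is obtained by noting $P_\eps\tau_{\eps z}Q$ and $\tau_{\eps z}Q$ agree on the $k$-skeleton up to chains supported in $\eps$-neighbourhoods, whence their flat distance is $O(\eps\,\M(Q) + \eps\,\M(\pt Q))$ using~\eqref{estimmassdefo} applied to both $Q$ and $\pt Q$. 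I expect this last geometric comparison on a fixed polyhedron to be the main obstacle to write cleanly; everything else is routine density and the triangle inequality.
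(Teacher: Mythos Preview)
Your overall approach is correct and is essentially White's argument (which the paper merely cites): density plus the uniform estimate~\eqref{coro_def_White_estim_Av_PA} reduce the claim to a fixed polyhedral $Q$, and for such $Q$ the identity of Theorem~\ref{thm_defWhite}(ii) together with the second mass estimate in~(iv) give the $O(\eps)$ bound. The ``continuity of translation'' step $\int_{(0,1)^n}\F(\tau_{\eps z}Q-Q)\,dz\to 0$ is indeed needed and is routine.

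However, your last paragraph invents a difficulty that is not there. There is nothing delicate in bounding the swept term: by the very definition of the flat norm (take $S=H_\eps(\tau_{\eps z}Q)$ and $R=H_\eps(\tau_{\eps z}\pt Q)$),
\[
\F\big(\tau_{\eps z}Q-P_\eps\tau_{\eps z}Q\big)
=\F\big(\pt H_\eps(\tau_{\eps z}Q)+H_\eps(\tau_{\eps z}\pt Q)\big)
\le \M\big(H_\eps(\tau_{\eps z}Q)\big)+\M\big(H_\eps(\tau_{\eps z}\pt Q)\big),
\]
and integrating over $z\in(0,1)^n$ using the second estimate of~\eqref{estimmassdefo} applied once to $Q$ and once to $\pt Q$ yields $\int_{(0,1)^n}\F(\tau_{\eps z}Q-P_\eps\tau_{\eps z}Q)\,dz\le \eps\,c(n)\,\N(Q)$. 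The point you overlooked is that $\F(\pt B)\le \M(B)$, not $\M(\pt B)$; so one never needs to control $\M(\pt H_\eps(\cdot))$, the non-commutation of $H_\eps$ with $\pt$ is irrelevant here, and there is no call for~\eqref{coro_def_White_estim_Av_HA} or for any ``geometric comparison on the $k$-skeleton''. With this one-line bound the polyhedral case is immediate, and the rest of your plan goes through verbatim.
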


\subsection{The deformation theorem for $(k_1,k_2)$-chains}
\label{SB2}
We propose a generalization of the deformation theorem to tensor chains.
\begin{theorem}\label{thm_defthm}
Let $\eps>0$, there exist three families of operators: 
\begin{align*}
\varPi^i=\varPi^i_\eps&: \FF^G_{k_1,k_2}(\R^n)\longto \FF^G_{k_1+i_1(i),k_2+i_2(i)}(\R^n),
\end{align*}
for $i=0,1,2$ and with
\[
i_1(0)=i_2(0)=0\qquad\quad\text{and}\qquad\quad i_l(i)=\d_{l,i}\quad\text{for }i,l=1,2,
\]
such that  for $A',B'\in \FF^G_{k_1,k_2}(\R^n)$ and almost every $y\in\R^n $ there hold:
\begin{enumerate}[(i)]
\item[(0)] For $i=0,1,2$ and $i_1(i),i_2(i)$ defined as above, we have the Lipschitz continuous group morphisms,
\[
\begin{array}{rcl}
\FF^G_{k_1,k_2}(\R^n)&\longto &L^1\lt([0,\eps)^n,\FF^G_{k_1+i_1(i),k_2+i_2(i)}(\R^n)\rt),\\
C'&\longmapsto&\qquad  z\mapsto \varPi^i(\tau_z C').
\end{array}
\]
Moreover the mappings $z\mapsto\tau_{-z}\varPi^i(\tau_z A')$ are $\eps$-periodic along all the coordinates axes. 
\item  We have 
\begin{align*}
 \pt_l \varPi^0(\tau_y A')&=\varPi^0\pt_l(\tau_yA')\quad\text{for }l=1,2,\\
 \pt_1 \varPi^2(\tau_y A')&=-\varPi^2\pt_1(\tau_yA'),\\
  \pt_2 \varPi^1(\tau_y A')&=-\varPi^1\pt_2(\tau_yA').
 \end{align*}
\item  We have the identity,
\begin{align*}
 \tau_yA'=\varPi^0(\tau_yA')+\pt_1\varPi^1(\tau_yA')+\pt_2\varPi^2(\tau_yA')+\varPi^1\pt_1(\tau_yA')+\varPi^2\pt_2(\tau_yA').
 \end{align*}
\item As in Theorem~\ref{thm_defWhite}, $\varPi^0(\tau_y A')$ writes as $\sum_Fg_F(y)F$ where $F$ runs over the set of $(k_1,k_2)$-faces of $\GG^\eps$ and $\sum_F|g_F(y)|<\oo$.  Moreover, when $A'\in\PP^G_{k_1,k_2}(\R^n)$ we have the same formula for the coefficients:
\be\label{formula_gprimeF}
g_F(y)=\chi\lt(\lt[(\tau_y A')\cap \la\wh F\ra\rt]\restr \wh F\rt).
\ee
\item There exist a constant $\wh c(n)\ge1$ only depending on $n$ such that
\[
 \int_{(0,1)^n}\M\lt(\varPi^0 (\tau_{\eps z}A')\rt)\,dz\le\wh c(n)\Mw(A'),
\]
and for $i=1,2$, 
\[
\int_{(0,1)^n}\Mw\lt(\varPi^i (\tau_{\eps z} A')\rt)\,dz\le\eps\wh c(n)\Mw(A').
\]
\item If $A'$ is a polyhedral tensor chain then for $i=1,2$, $\varPi^i(\tau_yA')$ is a polyhedral tensor chain.
\item If $A'$ is supported in $S$ then for $i=0,1,2$, $\Pi^i(\tau_yA')$ is supported in $(S+y)+[-\eps/2,\eps/2]^n$.
\end{enumerate}
\end{theorem}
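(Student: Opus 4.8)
\textbf{Proof strategy for Theorem~\ref{thm_defthm}.} The plan is to build the tensor deformation operators by iterating White's one-variable deformation in each of the two factors $\X_\a$ and $\X_\ova$, and then to bookkeep the resulting decomposition into its tensor components. Recall that $\FF^G_{k_1,k_2}(\R^n)$ is the projective tensor-product completion of $\FF^\Z_{k_1}(\R^n)\otimes\FF^G_{k_2}(\R^n)$. White's theorem (Theorem~\ref{thm_defWhite}), applied in the group $\FF^\Z_{k_1}(\R^n)$ for the first factor and in $\FF^G_{k_2}(\R^n)$ for the second, provides operators $P^{(1)}_\eps,H^{(1)}_\eps$ acting on the $\X_\a$-variable and $P^{(2)}_\eps,H^{(2)}_\eps$ acting on the $\X_\ova$-variable. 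The composition $P^{(1)}_\eps\circ P^{(2)}_\eps$ deforms a tensor chain onto the $(k_1,k_2)$-skeleton; set $\varPi^0:=P^{(1)}_\eps P^{(2)}_\eps$, $\varPi^1:=H^{(1)}_\eps P^{(2)}_\eps$ and $\varPi^2:=P^{(1)}_\eps H^{(2)}_\eps$ (up to signs to be fixed). The key algebraic input is that the two families commute with each other because they act on disjoint sets of coordinates, and that each family commutes or anticommutes with the corresponding partial boundary exactly as in Theorem~\ref{thm_defWhite}(i)--(ii); these facts are inherited from the bilinearity of the tensor construction and Remark~\ref{rmk_morphismPH} (applied with the forgetful morphisms on coefficient groups, and with the identification of chains in one factor as chains in $\R^n$ constant in the other).

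I would first treat polyhedral tensor chains $A'\in\PP^G_{k_1,k_2}(\R^n)$, where everything is an honest finite sum of products of polyhedral chains, so that all identities can be checked on simple tensors $Q_1\otimes Q_2$ and extended by bilinearity. For point (ii), I would start from the two one-variable identities $\tau^{(1)}_{y_1}Q_1=P^{(1)}Q_1+\pt_1 H^{(1)}Q_1+H^{(1)}\pt_1 Q_1$ and the analogous one in the second variable, take their ``product'' (tensor them together), and collect terms by bidegree; this produces the five-term decomposition of point (ii) and, simultaneously, reveals the signs in the anticommutation relations of point (i) (the sign in $\pt_1\varPi^2=-\varPi^2\pt_1$ comes from moving $\pt_1$ past the one extra dimension created by $H^{(2)}$ in the second factor). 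Point (iii) on the coefficients of $\varPi^0$ follows from iterating formula~\eqref{formula_gF}: the $(k_1,k_2)$-face $F=F_1\times F_2$ has dual $\wh F=\wh F_1\times\wh F_2$, and slicing/restricting factor by factor gives $g_F(y)=\chi([(\tau_yA')\cap\la\wh F\ra]\restr\wh F)$ with the same $\chi$ since $\chi$ on $\R^{n_1}$ composed with $\chi$ on $\R^{n_2}$ equals $\chi$ on $\R^n$ (this is the multiplicativity of the augmentation / Theorem~2.1 of~\cite{White1999-2} applied twice). Point (v), the regular grid orientation conventions, is a direct transcription of Theorem~\ref{thm_defWhite} in each factor.

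For the mass estimates in point (iv), I would combine the two one-variable estimates: averaging over translations in the $\X_\a$-directions using the first inequality of Theorem~\ref{thm_defWhite}(iv) controls the ``$P^{(1)}$ part'', and averaging in the $\X_\ova$-directions controls the ``$P^{(2)}$ part''; by Fubini and the fact that mass of a polyhedral tensor chain factors appropriately over simple tensors (and the projective norm is built to make $\Mw$ behave well under such products), one gets $\int\M(\varPi^0(\tau_{\eps z}A'))\,dz\le\wh c(n)\Mw(A')$ with $\wh c(n)=c(n)^2$ or similar, and the extra factor $\eps$ for $\varPi^1,\varPi^2$ from the second estimate of Theorem~\ref{thm_defWhite}(iv). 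The technical point to be careful about is that $H^{(i)}$ does \emph{not} commute with $\pt_i$, so to get the $L^1$-in-$y$ bound for $\varPi^i$ applied to a general (non-polyhedral) flat tensor chain one must use the ``integration by parts'' trick exactly as in the proof of~\eqref{coro_def_White_estim_Av_HA} above, now in the tensor setting; this gives the flat-norm averaged bounds needed to extend $\varPi^0,\varPi^1,\varPi^2$ from $\PP^G_{k_1,k_2}$ to all of $\FF^G_{k_1,k_2}$ by continuity, which is point (0). Point (vi) is immediate since $H^{(i)}$ of a polyhedral chain is polyhedral in each factor, and point (vii) on supports follows from Theorem~\ref{thm_defWhite}(vii) applied twice, noting $(S+y)+[-\eps/2,\eps/2]^n$ is the product of the one-variable neighborhoods.

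\textbf{Main obstacle.} The hardest part, I expect, is not any single identity but the careful reduction from simple tensors to general flat tensor chains together with correctly handling the non-commutation of $H^{(i)}$ with $\pt_i$. Concretely: (a) one must verify that the iterated White operators, a priori defined on the algebraic tensor product $\FF^\Z_{k_1}\otimes\FF^G_{k_2}$, extend to the projective completion $\FF^G_{k_1,k_2}$ — this needs the averaged \emph{flat-norm} estimates~\eqref{coro_def_White_estim_Av_PA}--\eqref{coro_def_White_estim_Av_HA} in tensored form, and the verification that the projective norm of the completion is compatible (i.e. that $\varPi^i$ extends as a bounded-in-average operator), which is where Remark~\ref{rmk_morphismPH} and the construction in~\cite{GM_tfc} do the real work; and (b) the sign bookkeeping in points (i)--(ii), where one genuinely has to chase how $\pt_1$ and $\pt_2$ commute past $H^{(1)},H^{(2)}$ and past each other, since a sign error there propagates into the decomposition identity of point (ii) and ultimately into Theorem~\ref{thm_Ups_2}. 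Once these are under control, all the quantitative estimates are ``just'' Fubini plus the one-variable theorem applied twice.
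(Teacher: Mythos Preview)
Your overall strategy is right, but your proposed definitions $\varPi^0=P^{(1)}P^{(2)}$, $\varPi^1=H^{(1)}P^{(2)}$, $\varPi^2=P^{(1)}H^{(2)}$ do \emph{not} satisfy the identity~(ii). Using that $P^{(l)},H^{(l)}$ commute with $\pt_m$ for $l\ne m$, your five terms sum to
\[
(P^{(1)}+\pt_1H^{(1)}+H^{(1)}\pt_1)P^{(2)}+P^{(1)}(\pt_2H^{(2)}+H^{(2)}\pt_2)
=P^{(2)}+P^{(1)}-P^{(1)}P^{(2)},
\]
which differs from the identity by $(\mathrm{Id}-P^{(1)})(\mathrm{Id}-P^{(2)})$. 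Equivalently, tensoring the two one-variable homotopies produces nine terms, and the four $H^{(1)}H^{(2)}$ cross terms (living in bidegree $(k_1{+}1,k_2{+}1)$) cannot be absorbed into three operators with the prescribed targets. The decomposition of~(ii) is not a ``product'' of the two one-variable decompositions; it is necessarily asymmetric.

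The paper's construction is sequential rather than symmetric. One first applies White's theorem in the $\X_\a$-variable only, via the isomorphism $\i$ identifying $\FF^G_{k_1,k_2}(\R^n)$ with $k_1$-chains in $\X_\a$ having coefficients in $\FF^G_{k_2}(\X_\ova)$: this gives $\varPi^1:=\i^{-1}H\i$ (so $\varPi^1=H^{(1)}$ with \emph{no} $P^{(2)}$ postcomposed) and an intermediate $\varPi:=\i^{-1}P\i$. The output $\varPi(\tau_y Q')$ is then a sum $\sum_{F^1}F^1\wedge g_{F^1}(y)$ with polyhedral coefficients in $\X_\ova$, and one applies the DT a second time to these coefficients, defining $\varPi^0$ and $\varPi^2$ (the latter with an explicit $(-1)^{k_1}$). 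Substituting the second decomposition into the first gives~(ii) directly, with no leftover terms. The anticommutation $\pt_2\varPi^1=-\varPi^1\pt_2$ then comes from the Koszul sign in $\i(\pt_2Q')=(-1)^{k_1}\pt_*\i Q'$ combined with the fact that $\varPi^1$ raises $k_1$; your stated reason (``moving $\pt_1$ past the extra dimension created by $H^{(2)}$'') is not the actual mechanism.

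Once the operators are defined this way, the rest of your plan (mass estimates by iterating Theorem~\ref{thm_defWhite}(iv), the ``integration by parts'' trick of~\eqref{coro_def_White_estim_Av_HA} for the averaged $\Fw$-bounds on $\varPi^1,\varPi^2$, extension from polyhedral tensor chains by continuity, and the support and polyhedrality statements) is correct and matches the paper.
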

\begin{remark}\label{rem_anticom}
The identities of point~(i) are crucial in the proof of Theorem~\ref{thm_Ups_2} below. Indeed, we can rewrite the identity of (ii) as 
\begin{multline*}
\tau_yA'=\varPi^0(\tau_yA')+\pt\varPi^1(\tau_yA')+ \pt\varPi^2(\tau_yA')+ \varPi^1\pt(\tau_yA')+ \varPi^2\pt(\tau_yA')\\
-\lt[\pt_2\varPi^1(\tau_yA')+\varPi^1\pt_2(\tau_yA')\rt]-\lt[\pt_1\varPi^2(\tau_yA')+\varPi^2\pt_1(\tau_yA')\rt].
\end{multline*}
Using the (anti-)commutation properties of~(i), we see that the two last terms in square brackets vanish and the identity simplifies to 
\begin{align}
\label{Theta_1}
\tau_yA'
&=\varPi^0(\tau_yA')+\pt\varPi^1(\tau_yA')+ \pt\varPi^2(\tau_yA')+ \varPi^1\pt(\tau_yA')+ \varPi^2\pt(\tau_yA')\\
\label{Theta_2}
&=\varPi^0(\tau_yA')+\pt \wh H(\tau_yA')+ \wh H \pt(\tau_yA'),
\end{align}
where we have set $\wh H:=\varPi^1+\varPi^2$.\\
In view of~\eqref{Theta_2} and the estimates and properties satisfied by the $\varPi^i$'s, we see that the pair $(\varPi^0,\wh H)$ behaves exactly as the pair $(P,H)$ in Theorem~\ref{thm_defWhite}, at least on polyhedral tensor chains. Since the latter are dense in the groups of flat chains, the pair $(\varPi^0,\wh H)$ (restricted to polyhedral tensor chains) extends by continuity on the groups $\FF^G_k(\R^n)$ as a pair $(\wh P,\wh H)$ which satisfies the same properties as $(P,H)$ in Theorem~\ref{thm_defWhite}. Moreover, as the formulas~\eqref{formula_gF}\&\eqref{formula_gprimeF} are the same on polyhedral tensor chains, we have in fact $\wh P=P$.
On the contrary, except in the degenerate cases $\{n_1,n_2\}=\{0,n\}$, we have $\wh H\ne H$. More importantly, the construction of~\cite{White1999-1} based on successive central projections\footnote{The method based on restrictions on $n$-cubes followed by successive central projections on $j$-faces for $j$ decreasing from $(n-1)$ to $k$ (and even $k-1$) dates back to the deformation theorem of Federer and Fleming~\cite{FedFlem60}.} mixes the coordinates $x^1,x^2$, so that in general  $H$ does not rewrite as $H^1+H^2$ with $H^1,H^2$ enjoying the same anti-commutation relations with respect to $\pt_2$, $\pt_1$ as $\varPi^1,\varPi^2$ in point~(i).
\end{remark}

Let us state the counterpart of Corollary~\ref{coro_defWhite}. As the proof is the same up to the obvious changes, we skip it.
\begin{corollary}
\label{coro_defthm}
There holds, for every $A'\in\FF^G_{k_1,k_2}(\R^n)$,
\[
\int_{(0,1)^n}\Fw(\varPi^0_\eps\tau_{\eps z} A'-A')\,dz\ \st{\eps\dw0}\longto\ 0.
\]
\end{corollary}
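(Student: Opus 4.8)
The final statement to prove is Corollary~\ref{coro_defthm}, which asserts that $\int_{(0,1)^n}\Fw(\varPi^0_\eps\tau_{\eps z} A'-A')\,dz\to 0$ as $\eps\downarrow 0$ for every tensor flat chain $A'$. This is the tensor analogue of Corollary~\ref{coro_defWhite}, and the text explicitly says the proof is the same up to obvious changes, so the plan is to transcribe White's argument into the tensor setting using the structure already available from Theorem~\ref{thm_defthm} and Remark~\ref{rem_anticom}.

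\textbf{Approach.} The plan is to first reduce to polyhedral tensor chains by a density/equicontinuity argument, then verify the claim on a polyhedral chain directly using the decomposition~\eqref{Theta_2}. For the reduction: the map $z\mapsto\varPi^0_\eps(\tau_{\eps z}A')$ is, by Theorem~\ref{thm_defthm}(0) together with the estimate of Theorem~\ref{thm_defthm}(iv) for $\varPi^0$ (applied with $G$ and, after the integration-by-parts trick exactly as in the proof of~\eqref{coro_def_White_estim_Av_HA} but now for $\wh H=\varPi^1+\varPi^2$), uniformly bounded in $L^1((0,1)^n,\FF^G_{k_1,k_2})$ by $C(n)\,\Fw(A')$ with a constant independent of $\eps$. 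Hence $A'\mapsto\big(z\mapsto\varPi^0_\eps\tau_{\eps z}A'-A'\big)$ is a family of Lipschitz maps $\FF^G_{k_1,k_2}\to L^1((0,1)^n,\FF^G_{k_1,k_2})$ with operator norm bounded uniformly in $\eps$. So it suffices to prove the convergence on the dense subgroup $\PP^G_{k_1,k_2}(\R^n)$: given $A'$ and $\eta>0$, pick a polyhedral $Q'$ with $\Fw(A'-Q')$ small, estimate $\int\Fw(\varPi^0\tau_{\eps z}A'-A')\le \int\Fw(\varPi^0\tau_{\eps z}(A'-Q'))+\int\Fw(\varPi^0\tau_{\eps z}Q'-Q')+\Fw(Q'-A')$, and let $\eps\downarrow0$.

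\textbf{The polyhedral case.} For $Q'\in\PP^G_{k_1,k_2}(\R^n)$ one uses~\eqref{Theta_2}: for a.e.\ $y$,
\[
\tau_y Q' - \varPi^0(\tau_y Q') = \pt\wh H(\tau_y Q') + \wh H\pt(\tau_y Q').
\]
Translating back and integrating, $\Fw(\varPi^0\tau_{\eps z}Q'-Q')=\Fw\big(\tau_{-\eps z}\varPi^0\tau_{\eps z}Q'-Q'\big)$ (using that $\tau_{-\eps z}$ is a norm-preserving automorphism, $\eps$-periodicity from point~(0), and that $\tau_{-\eps z}\tau_{\eps z}Q'=Q'$), and $\tau_{-\eps z}\big(\tau_{\eps z}Q'-\varPi^0\tau_{\eps z}Q'\big) = \tau_{-\eps z}\pt\wh H(\tau_{\eps z}Q') + \tau_{-\eps z}\wh H\pt(\tau_{\eps z}Q')$. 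Taking $\Fw$ and bounding $\Fw(\pt\wh H(\cdot))\le\Mw(\wh H(\cdot))$ (legitimate since for polyhedral chains $\wh H(\tau_y Q')$ is polyhedral, Theorem~\ref{thm_defthm}(v)), and $\Fw(\wh H\pt(\cdot))\le\Mw(\wh H\pt(\cdot))$, the estimates of Theorem~\ref{thm_defthm}(iv) for $\varPi^1,\varPi^2$ give
\[
\int_{(0,1)^n}\Fw(\varPi^0_\eps\tau_{\eps z}Q'-Q')\,dz\le \eps\,\wh c(n)\big(\Mw(Q')+\Mw(\pt_1 Q')+\Mw(\pt_2 Q')\big)=\eps\,\wh c(n)\,\Nw(Q')\ \st{\eps\dw0}\longto\ 0.
\]
Here one splits $\pt\wh H=\sum_{l=1,2}\pt_l(\varPi^1+\varPi^2)$ and uses the estimate on each $\varPi^i$ together with $\Mw(\pt_l\varPi^i(\cdot))\le\Mw$ applied to boundaries, exactly as in the proof of~\eqref{coro_def_White_estim_Av_HA}; the anti-commutation identities of Theorem~\ref{thm_defthm}(i) are what make the bookkeeping collapse to $\Nw(Q')$.

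\textbf{Main obstacle.} There is no serious obstacle: the argument is a mechanical transfer of White's proof, and all the needed ingredients — uniform-in-$\eps$ average mass bounds for $\varPi^0$ and $\eps$-small bounds for $\varPi^1,\varPi^2$, the periodicity and $L^1\loc$ regularity from point~(0), the polyhedrality of $\wh H$ on polyhedral inputs, and the decomposition~\eqref{Theta_2} — are packaged into Theorem~\ref{thm_defthm}. The one point requiring a little care is the uniform-in-$\eps$ bound $\int_{(0,1)^n}\Fw(\varPi^0_\eps\tau_{\eps z}A')\,dz\le C(n)\Fw(A')$ on \emph{all} tensor flat chains (not just polyhedral ones), which underlies the density reduction: this is obtained, as for~\eqref{coro_def_White_estim_Av_PA}, directly from the mass estimate of Theorem~\ref{thm_defthm}(iv) for $\varPi^0$ since $\varPi^0$ commutes with $\pt_1,\pt_2$ (point~(i)), so one can optimize over decompositions $Q'=R'+\pt_1 S'_1+\pt_2 S'_2$ and over-bound $\Fw$ by $\Mw$ as in the flat-chain case — indeed this step is strictly easier than for $\wh H$ and needs no integration-by-parts trick.
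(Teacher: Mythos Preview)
Your overall strategy is exactly right and matches what the paper (following White) has in mind: reduce by density using a uniform-in-$\eps$ $\Fw$-bound on $z\mapsto\varPi^0_\eps\tau_{\eps z}(\cdot)$ coming from Theorem~\ref{thm_defthm}(i)\&(iv), and on polyhedral $Q'$ use the decomposition~\eqref{Theta_2} together with the $\eps\wh c(n)\Mw$ estimates for $\varPi^1,\varPi^2$.

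There is, however, a genuine slip in your polyhedral step. The claimed equality
\[
\Fw(\varPi^0\tau_{\eps z}Q'-Q')=\Fw\big(\tau_{-\eps z}\varPi^0\tau_{\eps z}Q'-Q'\big)
\]
is false: applying the isometry $\tau_{\eps z}$ to the right-hand side gives $\Fw(\varPi^0\tau_{\eps z}Q'-\tau_{\eps z}Q')$, not $\Fw(\varPi^0\tau_{\eps z}Q'-Q')$; the $\eps$-periodicity of $z\mapsto\tau_{-\eps z}\varPi^0\tau_{\eps z}Q'$ does not rescue this, since $Q'$ is fixed. What your argument actually bounds is $\int_{(0,1)^n}\Fw(\varPi^0\tau_{\eps z}Q'-\tau_{\eps z}Q')\,dz\le C\eps\Nw(Q')$, which is correct. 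To reach the desired quantity you must add, by the triangle inequality, the translation term $\int_{(0,1)^n}\Fw(\tau_{\eps z}Q'-Q')\,dz$. This is handled exactly as in the paper's proof of Theorem~\ref{thm_Ups_2} (the lines following~\eqref{proof_thm_Ups_2_2}): the homotopy formula applied to $h(t,x)=t\eps z+x$ gives $\Fw(\tau_{\eps z}Q'-Q')\le\sqrt{n}\,|z|\,\eps\,\Nw(Q')$, so this extra term is also $O(\eps\Nw(Q'))$. With this correction your final bound and the conclusion stand. (A minor side remark: the $\Fw$-estimate for $\varPi^0$ needs decompositions $Q'=R+\pt_1S_1+\pt_2S_2+\pt_1\pt_2T$, as in the paper's~\eqref{A_defthm_12}, not just three-term decompositions.)
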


\subsection{Proof of Theorem~\ref{thm_Ups_2}}
\label{SB3}
Assuming that Theorem~\ref{thm_defthm} holds true, we are in position to prove Theorem~\ref{thm_Ups_2}.
Since $\j$ is one-to-one, we only need to establish that the group morphism~\eqref{j3} is onto.\medskip

Let $A'\in\NN^G_{k_1,k_2}(\R^n)$. From the identity~\eqref{Theta_1} we have for almost every $y\in\R^n$,
\[
\varPi_\eps^0\tau_yA'=\tau_yA'-\pt\lt(\lt[\varPi_\eps^1+\varPi_\eps^2\rt](\tau_yA')\rt)- \lt[\varPi_\eps^1\tau_y+\varPi_\eps^2\rt](\tau_y\pt A').
\]
Setting $\eps_i:=2^{-i}$ for $i\ge0$, we deduce for $j\ge i\ge0$ and almost every $y\in\R^n$,
\begin{multline}
\label{proof_thm_Ups_2_1}
\varPi_{\eps_j}^0\tau_yA'-\varPi_{\eps_i}^0\tau_yA'
=\pt\lt(\lt[\varPi_{\eps_j}^1+\varPi_{\eps_j}^2-\varPi_{\eps_i}^1-\varPi_{\eps_i}^2\rt](\tau_yA')\rt)\\
+\lt[\varPi_{\eps_j}^1+\varPi_{\eps_j}^2-\varPi_{\eps_i}^1-\varPi_{\eps_i}^2\rt](\tau_y\pt A').
\end{multline}
Now we notice that by construction and also from the formula~\eqref{formula_gprimeF} we have $\varPi_{\eps_j}^0\tau_{y'}Q=Q$ for almost every $y'\in(-\eps_j/2,\eps_j/2)^n$ and for any tensor chain of the form $Q=\sum_Fg_FF$ where the sum runs over the oriented $(k_1,k_2)$-faces $F$ of $\GG^{\eps_j}$ and where $\sum_F|g_F|<\oo$. Since $\eps_i/\eps_j$ is an integer, both $\varPi_{\eps_j}^0(\tau_yA')$ and $\varPi_{\eps_i}^0(\tau_yA')$ are of this form, applying $\varPi_{\eps_i}^0\tau_{y'}$ to~\eqref{proof_thm_Ups_2_1} we get 
\begin{multline*}
\varPi_{\eps_j}^0\tau_yA'-\varPi_{\eps_i}^0\tau_yA'
=\pt\varPi_{\eps_j}^0\tau_{y'}\lt(\lt[\varPi_{\eps_j}^1+\varPi_{\eps_j}^2-\varPi_{\eps_i}^1-\varPi_{\eps_i}^2\rt](\tau_yA')\rt)\\
+\varPi_{\eps_j}^0\tau_{y'}\lt(\lt[\varPi_{\eps_j}^1+\varPi_{\eps_j}^2-\varPi_{\eps_i}^1-\varPi_{\eps_i}^2\rt](\tau_y\pt A')\rt),
\end{multline*}
where we used the relation $\varPi_{\eps_j}^0\pt=\pt\varPi_{\eps_j}^0$. We deduce
\begin{multline*}
\F\lt(\varPi_{\eps_j}^0\tau_yA'-\varPi_{\eps_i}^0\tau_yA'\rt)\le
\M\lt(\varPi_{\eps_j}^0\tau_{y'}\lt(\lt[\varPi_{\eps_j}^1+\varPi_{\eps_j}^2-\varPi_{\eps_i}^1-\varPi_{\eps_i}^2\rt](\tau_yA')\rt)\rt)\\
+\M\lt(\varPi_{\eps_j}^0\tau_{y'}\lt(\lt[\varPi_{\eps_j}^1+\varPi_{\eps_j}^2-\varPi_{\eps_i}^1-\varPi_{\eps_i}^2\rt](\tau_y\pt A')\rt)\rt).
\end{multline*}
Writting $y'=\eps_j z'$ and integrating over $z'\in(-1/2,1/2)^n$, we get using the first estimate of Theorem~\ref{thm_defthm}(iv) and the $\eps$-periodicity of $y'\mapsto \tau_{-y'}\varPi^0_\eps\tau_{y'}$.
\begin{multline*}
\F\lt(\varPi_{\eps_j}^0\tau_yA'-\varPi_{\eps_i}^0\tau_yA'\rt)\\
\le\wh c(n)\sum_{\eps\in\{\eps_i,\eps_j\}} \sum_{l=1}^2
\lt[\Mw\lt(\varPi_\eps^l(\tau_yA')\rt)+\Mw\lt(\varPi_\eps^l(\tau_y\pt_1 A')\rt)+\Mw\lt(\varPi_\eps^l(\tau_y\pt_2 A')\rt)\rt].
\end{multline*}
Integrating with respect to $y\in(0,1)^n$ and using the second estimate of Theorem~\ref{thm_defthm}(iv) together with the $\eps_i$-periodicity of the operators $y\mapsto \tau_{-y}\varPi^l_{\eps_i}\tau_y$ for $l=1,2$, we get 
\[
\int_{(0,1)^n}\F\lt(\varPi_{\eps_j}^0\tau_yA'-\varPi_{\eps_i}^0\tau_yA'\rt)\,dy\le2[\wh c(n)]^2(\eps_i+\eps_j)\Nw(A')\le4[\wh c(n)]^2\eps_i\Nw(A').
\]
Let us now set for $y\in(0,1)^n$ and $i\ge0$, $Q_i(y):=\tau_{-y}\varPi_{\eps_j}^0\tau_{y}A'$, the preceding estimate rewrites as
\[
\int_{(0,1)^n}\F\lt(Q_j(y)-Q_i(y)\rt)\,dy\le4[\wh c(n)]^2\eps_i\Nw(A')=2^{2-i}[\wh c(n)]^2\Nw(A').
\]
We deduce that $Q_j$ is a Cauchy sequence in $L^1((0,1)^n,\FF^G_k(\R^n))$. Let us denote $y\mapsto A(y)$ its limit. Taking into account the $\eps_i$-periodicity of $y\mapsto Q_i(y)$, the estimates of Theorem~\ref{thm_defthm}(iv) and Fatou's lemma lead to
\begin{align*}
\int_{(0,1)^n}\M(A(y))\,dy&\le\liminf\int_{(0,1)^n}\M(Q_j(y))\,dy\le\wh c(n)\Mw(A'),\\
\int_{(0,1)^n}\M(\pt A(y))\,dy&\le\liminf\int_{(0,1)^n}\M(\pt Q_j(y))\,dy\le\wh c(n)\Mw(\pt A'),
\end{align*}
where for the second estimate, we have used $\pt\varPi^0_\eps=\varPi^0_\eps\pt$. In particular, $A(y)\in\NN^G_k(\R^n)$ for almost every $y\in(0,1)^n$.

Next, by Theorem~\ref{thm_defthm}(iii), each $Q_j(y)$ lies in $\PP^G_{k_1,k_2}(\R^n)$. Consequently $\j_{k'_1,k'_2}Q_j(y)=0$ for $(k'_1,k'_2)\ne(k_1,k_2)$. By continuity of $\j$ we get that $\j_{k'_1,k'_2}A(y)=0$ for $(k'_1,k'_2)\ne(k_1,k_2)$ so that $A(y)\in\FF^G_{k,(k_1,k_2)}(\R^n)$ and thus $A(y)\in\NN^G_{k,(k_1,k_2)}(\R^n)$ for almost every $y\in\R^n$. 

Now we consider the tensor chain $Q_j'(y):=Q_j(y)=\tau_{-y}\varPi^0_{\eps_i}\tau_yA'$ as an element of $\FF^G_{k_1,k_2}(\R^n)$. Using again the $\eps_j$-periodicity of $y\mapsto Q'_j(y)$ we write for $j\ge1$,
\begin{align}
\nonumber
\int_{(0,1)^n}\Fw\lt(Q'_j(y)-A'\rt)\, dy
&=\int_{(0,1)^n}\Fw\lt(Q'_j(\eps_jz)-A'\rt)\, dz\\
\nonumber
&=\int_{(0,1)^n}\Fw\lt(\varPi^0_{\eps_j}\tau_{\eps_jz}A'-\tau_{\eps_jz}A'\rt)\, dz\\
\label{proof_thm_Ups_2_2}
&\le\int_{(0,1)^n}\Fw\lt(\varPi^0_{\eps_j}\tau_{\eps_jz}A'-A'\rt)\, dz+ \int_{(0,1)^n}\Fw\lt(A'-\tau_{\eps_jz}A'\rt)\, dz.
\end{align}
By Corollary~\ref{coro_defthm}, the first term of~\eqref{proof_thm_Ups_2_2} goes to 0 as $j\up\oo$. For the second term, we fix $z\in(0,1)^n$ and introduce the Lipschitz continuous mapping
\[
h:(t,x)\in[0,1]\times\R^n\longmapsto t \eps_jz+x\in\R^n.
\]
Denoting $(e_0,e_1,\dots,e_n)$ the standard basis of $\R\times\R^n$ we write the homotopy formula
\[
\tau_{\eps_jz}A'-A'=\pt h\pf\lt(\lb(0,e_0)\rb\we A'\rt)+h\pf\lt(\lb(0,e_0)\rb\we\pt A'\rt).
\]
This yields 
\[
\Fw(A'-\tau_{\eps_jz}A)\le \sqrt{n}|z|\eps_j\Nw(A').
\]
We deduce that the second term of~\eqref{proof_thm_Ups_2_2} also goes to 0 as $j\up\oo$ so that the sequence $y\mapsto Q'_j(y)$ converges towards $y\mapsto A'$ in $L^1((0,1)^n,\FF^G_{k_1,k_2}(\R^n))$.\\
Passing to the limit in the identity $\j_{k_1,k_2}Q_j(y)=Q'_j(y)$ we get by continuity of $\j_{k_1,k_2}$  that $\j_{k_1,k_2}A(y)=A'$ for almost every $y\in(0,1)^n$. Since $A(y)\in\NN^G_{k,(k_1,k_2)}(\R^n)$ this proves that $\j_{k_1,k_2}:\NN^G_{k,(k_1,k_2)}(\R^n)\to\NN^G_{k_1,k_2}(\R^n)$ is onto, hence  the result. We may notice that as $\j$ is one-to-one, $y\mapsto A(y)$ is constant.\hfill$\square$

\section{Proof of Theorem~\ref{thm_defthm}}
\label{SC}
The construction leading to the deformation of tensor chains is based on the deformation theorem for ``classical'' chains, Theorem~\ref{thm_defWhite} (denoted DT in the proof below). Considering a tensor chain $A'\in\FF^G_{k_1,k_2}(\R^n)$, we apply the DT to the $k_1$-chain  $\i A'\in\FF_{k_1}(\X_\a,\FF^G_{k_2}(\X_\ova))$. We obtain a polyhedral  $k_1$-chain with coefficients in $\FF^G_{k_2}(\X_\ova)$. In a second step, we apply the DT on these coefficients. 

By scaling, we can assume $\eps=1$. \medskip

\noindent
\textit{Step 1 (Definition of the operators on $\PP^G_{k_1,k_2}(\R^n)$).}\\  
Let $Q'\in\PP^G_{k_1,k_2}(\R^n)$, we apply the DT to $\i Q'$. Notice that by Remark~\ref{rmk_morphismPH} applied to the $1$-Lipschitz inclusion, \[\phi:(\MM^G_{k_2}(\R^n),+,\M)\hookrightarrow(\FF^G_{k_2}(\R^n),+,\F),\]
we see that whether we consider $\i Q'$ as a chain with coefficients in $\MM^G_{k_2}(\R^n)$ or in $\FF^G_{k_2}(\R^n)$ does not matter for the definition of $P(\tau_{y^1}\i Q')$ or $H(\tau_{y^1}\i Q')$.\\
We have for almost every $y^1\in\X_\a$,
\[
\tau_{y^1}\i Q'=P(\tau_{y^1}\i Q')+\pt H(\tau_{y^1}\i Q')+H(\pt \tau_{y^1}\i Q').
\]
Obviously, $\tau_{y^1}\i Q'=\i\tau_{y^1} Q'$ and since $\i$ is a group isomorphism, we can write
\[
\tau_{y^1}Q'=\i^{-1}(P(\i \tau_{y^1} Q'))+\i^{-1}(\pt H(\i \tau_{y^1}Q'))+\i^{-1}(H(\pt \i \tau_{y^1} Q')).
\]
Defining 
\be\label{PiPi1}
\varPi := \i^{-1}P\i, \qquad\quad\varPi^1:=\i^{-1} H \i,
\ee
and using $\pt\i=\i\pt_1$, we get,
\be\label{A_defthm_1}
\tau_{y^1} Q'=\varPi(\tau_{y^1}Q')+\pt_1\varPi^1(\tau_{y^1} Q')+\varPi^1(\tau_{y^1}\pt_1 Q').
\ee
Notice that by~DT(iii)\&(v), 
\[
\varPi:\PP^G_{k_1,k_2}(\R^n)\longto\PP^G_{k_1,k_2}(\R^n)\quad\text{and}\quad\varPi^1:\PP^G_{k_1,k_2}(\R^n)\longto\PP^G_{k_1+1,k_2}(\R^n).
\]
Moreover, for almost every $y^1\in\X_\a$, 
\be\label{A_defthm_2}
\varPi (\tau_{y_1} Q')=\sum_{F^1}F^1\we g_{F^1}(y^1)\qquad\text{ where }\ g_{F^1}(y^1)\in \MM^G_{k_2}(\X_\ova), 
\ee
and the sum runs over the set of oriented $k_1$-faces of the grid with vertices $\Z^{n_1}$ in $\X_\a$. \\
Let now $y^2\in \X_\ova$ and  $y:=y^1+y^2$. Before taking the push-forward of~\eqref{A_defthm_1} by $\tau_{y^2}$, let us observe that we have the following relations. 
\be\label{A_defthm_3}
\tau_{y^2}\pt_1=\pt_1\tau_{y^2},\quad\tau_{y^2}\varPi=\varPi\tau_{y^2},\quad \tau_{y^2}\varPi^1=\varPi^1\tau_{y^2},\quad \pt_2\varPi=\varPi\pt_2,\quad \pt_2\varPi^1=-\varPi^1\pt_2.
\ee
The first identity is obvious and the others follow from Remark~\ref{rmk_morphismPH} applied to the Lipschitz continuous group morphisms, 
\[
\tau_{y_2}:\FF^G_{k_2}(\X_\ova)\to\FF^G_{k_2}(\X_\ova)\qquad\text{ and }\qquad\pt_2:\FF^G_{k_2}(\X_\ova)\to\FF^G_{k_2-1}(\X_\ova).
\]
We only detail the proof of the last identity (and explain the  presence of the minus sign). Recall that for a $(k'_1,k'_2)$-chain $R'=gp^1\we p^2$, we have by definition $\pt_2R'=(-1)^{k'_1}g p^1\we(\pt p^2)$. Arguing exactly as in \cite[ (5.9)]{GM_tfc} and using Remark~\ref{rmk_morphismPH}
we have for $Q'\in \PP^G_{k_1,k_2}(\R^n)$, 
\be\label{ipt2Q'}
\i(\pt_2 Q')= (-1)^{k_1}\pt_* \i Q'.
\ee
Taking into account that  the operator $\varPi^1$ raises the first index of tensor chains (if $Q'\in \FF^G_{k_1,k_2}(\R^n)$ then $\varPi^1 Q'\in \FF^G_{k_1+1,k_2}(\R^n)$), we compute, 
\[\begin{array}{rll}
\i(\pt_2 \varPi^1Q')&\st{\eqref{ipt2Q'}}  =(-1)^{k_1+1}\ov\pt\pf \i(\varPi^1 Q') 
    &\st{\eqref{PiPi1}}=(-1)^{k_1+1}\ov\pt\pf H\i Q'\\
&\st{\text{(Rem.~\ref{rmk_morphismPH})}}=(-1)^{k_1+1}H\ov\pt\pf\i Q'
     &\st{\eqref{ipt2Q'}}=(-1)^{k_1+1}(-1)^{k_1} H\i(\pt_2 Q')
          \st{\eqref{PiPi1}}=-\i \varPi^1\pt_2Q'.
\end{array}
\]
The last identity of~\eqref{A_defthm_3} then follows by applying $\i^{-1}$. \medskip

Let us resume the construction. Composing~\eqref{A_defthm_1} by $\tau_{y^2}$ and using the identities~\eqref{A_defthm_3}, we get for almost every $y\in\R^n $,
\be\label{A_defthm_4}
\tau_y Q'=\varPi(\tau_yQ')+\pt_1\varPi^1(\tau_yQ')+\varPi^1(\pt_1\tau_yQ').
\ee
Moreover, we deduce from $\pt P=P\pt$ that 
\be\label{A_defthm_4.5}
\pt_1\varPi(\tau_yQ')= \varPi\pt_1(\tau_yQ').
\ee

Composing by $\tau_{y^2}$ the expression~\eqref{A_defthm_2} of $\varPi(\tau_{y^1}Q')$, we get with obvious notation,
\[
\varPi (\tau_yQ')=\sum_{F^1} F^1\we g_{F^1}(y).
\]
Applying the DT to the chains $g_{F^1}(y)\in \PP^G_{k_2}(\X_\ova)$, we set
\be\label{Pi0Pi2}
\varPi^0(\tau_yQ'):=\sum_{F^1} F^1\we P(g_{F^1}(y)),\qquad\quad \varPi^2(\tau_yQ'):=(-1)^{k_1}\sum_{F^1} F^1\we H(g_{F^1}(y)),
\ee
By DT(ii), there holds
\[
\varPi (\tau_yQ')=\varPi^0(\tau_yQ')+\pt_2\varPi^2(\tau_yQ')+\varPi^2(\pt_2\tau_yQ').
\]
Substituting this identity in~\eqref{A_defthm_4} we obtain the desired relation for every $Q'\in\PP^G_{k_1,k_2}(\R^n)$ and almost every $y\in\R^n $, namely: 
\be\label{A_defthm_5}
\tau_yQ'=\varPi^0(\tau_yQ')+\pt_1\varPi^1(\tau_yQ')+\varPi^1(\pt_1\tau_yQ')+\pt_2\varPi^2(\tau_yQ')+\varPi^2(\pt_2\tau_yQ').
\ee
 Using again $\pt P=P\pt$ and~\eqref{A_defthm_4.5},  we also have the identities
\[
\pt_l \varPi^0(\tau_yQ')=\varPi^0 \pt_l(\tau_yQ')\quad\text{for }l=1,2\qquad\text{and}\qquad\pt_1\varPi^2(\tau_yQ')=-\varPi^2 \pt_1(\tau_yQ').
\]
The minus sign in the last identity comes from the factor $(-1)^{k_1}$ in the definition of $\varPi^2$. At this point, provided that $A'$ and $B'$ are polyhedral tensor chains, the operators $\varPi^i$, $i=0,1,2$ comply to points (i),(ii),(iii) of the theorem except formula~\eqref{formula_gprimeF}. We also observe that the points~(v)\&(vi) also hold by construction from the corresponding points of the DT.\medskip

Let us establish~\eqref{formula_gprimeF}. We decompose a $(k_1,k_2)$-face $F$ of $\GG^1\cap \X_\a$ as $F=F^1\we F^2$ where $F^1$ is $k_1$-face of the grid $\GG^1\cap \X_\a$ and $F^2$ a $k_2$-faces of $\GG^1\cap \X_\ova$. \\
By construction and point~(iii) of the DT, the coefficient $g_{F^1}(y^1)$ in~\eqref{Pi0Pi2} is given by 
\[
g_{F^1}(y^1)=\chi\lt([(\tau_{y^1} \i Q')\cap \la\wh F^1\ra]\restr \wh F^1\rt),
\]
where $\wh F^1$ is the face dual to $F^1$ in $\X_\a$ and $\la\wh F^1\ra$ is the affine space spanned by $\wh F^1$. Then, again by construction and point~(iii) of  DT, $P(g_{F^1}(y))$ writes $\sum_{F^2} g'_{F^1\we F^2}(y)F^2$ where, with similar notation, 
\[
g'_F(y)=\chi\lt(\lt[(\tau_{y^2} [g_{F^1}(y^1)])\cap \la\wh F^2\ra\rt]\restr \wh F^2\rt).
\]
Using the commutation properties of restrictions and slicing and the obvious fact that $\chi(\chi(\i R'))=\chi(R')$ for a $0$-polyhedral chain, this formula simplifies to
\[
g'_F(y)=\chi\lt(\lt[(\tau_yQ')\cap\la\wh F\ra\rt]\restr \wh F\rt)\qquad\text{for almost every }y\in\R^n.
\]
This is formula~\eqref{formula_gprimeF}. 

In summary, the properties~(i)(ii)(iii)(v)\&(vi) of the theorem hold for polyhedral tensor chains. 
Moreover the operators $\varPi^0,\varPi^1,\varPi^2$ are group morphisms.\medskip

\noindent
\textit{Step 2 (Estimates and extension of the operators).}
Let $Q'\in \PP^G_{k_1,k_2}(\R^n)$. We first estimate the average masses of $\varPi(\tau_y Q')$ and $\varPi^1(\tau_y Q')$. From the estimates~DT(iv) and the fact that $\i$ is an isometry from $(\PP^G_{k_1,k_2}(\R^n),\M)$ onto $\ds\lt(\PP_{k_1}\lt(\X_\a,(\PP_{k_2}^G(\X_\ova),\M\rt),\M\rt)$ (see \cite[Section 4.6]{GM_tfc}), we deduce, 
\begin{align}
\label{A_defthm_8}
\int_{\X_\a\cap (0,1)^n} \M(\varPi \tau_{y^1}Q')\,dy^1&\le c(n_1)\M(Q'),\\
\label{A_defthm_9}
\int_{\X_\a\cap (0,1)^n}\M(\varPi^1 \tau_{y^1} Q')\,dy^1&\le c(n_1)\M(Q').
\end{align}
Since $\varPi$ and $\varPi^1$ commute with $\tau_{y^2}$ for $y^2\in\X_\ova$, we can take integrate over $\X_\ova\cap(0,1)^n$ and get the same estimates. 

Next, using the left formula of~\eqref{Pi0Pi2} and the estimates~of DT(iv) in $\X_\ova$, we obtain for almost every $y^1\in \X_\a$,
\[
\int_{\X_\a\cap (0,1)^n}\M(\varPi^0\tau_{y^1+y^2} Q')\,dy^2\le c(n_2) \sum \M(F) \M(g_F(y^1))=c(n_2) \M(\varPi\tau_{y^1} Q').
\]
Integrating over $y^1\in \X_\a\cap(0,1)^n$ and using~\eqref{A_defthm_8} yield, 
\be\label{A_defthm_10}
\int_{(0,1)^n}\M(\varPi^0\tau_yQ')\,dy\le c(n_1)c(n_2) \M(Q').
\ee
Similarly, using the right formula of~\eqref{Pi0Pi2} and~\eqref{A_defthm_8} we obtain, 
\be\label{A_defthm_11}
\int_{(0,1)^n}\M(\varPi^2\tau_yQ')\,dy\le c(n_1)c(n_2)\M(Q').\medskip
\ee

Let us now establish that the operators $\varPi^i$ extend by continuity to all tensor chains. The proof is similar to the one in~\cite{White1999-1} for the operators $P$,$H$.\\
The first step consists in showing the continuity (in average) of these operators in $\Fw$-norm. Let us decompose $Q'\in \PP^G_{k_1,k_2}(\R^n)$ as $Q'=R^{0,0}+\pt_1R^{1,0}+\pt_2R^{0,1}+\pt_1\pt_2 R^{1,1}$. We have by~\eqref{A_defthm_5}, 
\[
\varPi^0 \tau_yQ'= \tau_y \varPi^0R^{0,0}+\pt_1\varPi^0 \tau_y R^{1,0}+\pt_2\varPi^0 \tau_yR^{0,1}+\pt_1\pt_2 \varPi^0 \tau_yR^{1,1},
\]
hence $\Fw(\varPi^0\tau_y Q')\le\sum\M(\varPi^0 \tau_y R^{i_1,i_2})$. Averaging over $y\in(0,1)^n$, using~\eqref{A_defthm_10} and optimizing over the decompositions, we obtain the desired estimate:
\be\label{A_defthm_12}
\int_{(0,1)^n}\Fw(\varPi^0\tau_yQ')\,dy\le c(n_1)c(n_2)\Fw(Q').
\ee
The case of the operators $\varPi^1$, $\varPi^2$ is slightly more difficult because $\varPi^1$ does not commute with $\pt_1$ and $\varPi^2$ does not commute with $\pt_2$. 
We first treat the former.
Applying~\eqref{coro_def_White_estim_Av_HA} to $\i Q'$, we get
\[
\int_{\X_\a\cap(0,1)^n}\Fw(\varPi^1\tau_{y^1}Q')\,dy^1=\int_{\X_\a\cap(0,1)^n}\Fw(H\tau_{y^1}\i Q')\,dy^1\le    c'(n_1,1)\F(\i Q')= c'(n_1,1)\Fw(Q').
\]
Averaging over $y^2\in\X_{\ov a}\cap(0,1)^n$ yields
\be\label{A_defthm_13}
\int_{(0,1)^n}\Fw(\varPi^1\tau_yQ')\,dy\le c'(n_1,1)\Fw(Q').
\ee
 For the operator $\varPi^2$, we come back to the  definition~\eqref{Pi0Pi2} of $\varPi^2\tau_yQ'$. We compute, using the DT in $\FF^\Z_{k_1}\lt(\X_\a,\FF^G_{k_2}(\X_\ova)\rt)$ and the fact that $\h^{k_1}(F^1)=1$ for every $k_1$-face of the grid,
\begin{align}
\nonumber 
\int_{(0,1)^n}\M\lt(\i \varPi^2 (\tau_y Q')\rt)\,dy&\le\sum_{F^1}\int_{(0,1)^n}  \F\lt(H\lt(\tau_{y^2}g_{F^1}(y^1)\rt)\rt)\,dy \\
\nonumber 
&\st{\eqref{coro_def_White_estim_Av_HA}}\le  c'(n_2,1) \sum_{F^1}\int_{(0,1)^n\cap\X_a}\F\lt(g_{F^1}(y^1)\rt)\,dy^1 \\
\nonumber 
&=c'(n_2,1) \int_{(0,1)^n\cap\X_a}\sum_{F^1}\h^{k_1}(F^1)\F\lt(g_{F^1}(y^1)\rt)\,dy^1.
\end{align}
Now by the expression \eqref{A_defthm_2} of $\varPi \tau_{y^1} Q'$ we have 
\begin{align}
\nonumber
\int_{(0,1)^n\cap\X_a}\sum_{F^1}\h^{k_1}(F^1)\F\lt(g_{F^1}(y^1)\rt)\,dy^1&= \int_{(0,1)^n\cap\X_a}\M(\i \varPi  \tau_{y^1} Q')\,dy^1\\
\nonumber 
&= \ \int_{(0,1)^n\cap \X_\a} \M( P\i \tau_{y^1}Q')\,dy^1\\
\nonumber
&\st{\eqref{estimmassdefo}}\le c(n_1) \M(\i Q').
\end{align}
Therefore,
\begin{equation}\label{A_defthm_135}
 \int_{(0,1)^n}\M\lt(\i \varPi^2 (\tau_y Q')\rt)\,dy\le c'(n_2,1)c(n_1)\M(\i Q').
\end{equation}

Next, let us decompose $\i Q'$ as $\i Q'=R+\pt S$. Denoting $R':=\i^{-1}R\in\PP^G_{k_1,k_2}(\R^n)$, $S':=\i^{-1}S\in\PP^G_{k_1+1,k_2}(\R^n)$, we write  $Q'=R'+\pt_1 S'$. Using $\pt_1\varPi^2\tau_y=\varPi^2\pt_1\tau_y$, we  have for almost every $y$,
\[
\varPi^2 (\tau_yQ')=\varPi^2 (\tau_yR')+\pt_1\varPi^2 (\tau_yS').
\] 
Hence, for almost every $y\in \R^n$,
\[
\Fw\lt(\varPi^2 (\tau_yQ')\rt)=\F\lt(\i\varPi^2 (\tau_yQ')\rt)\le\M\lt(\i \varPi^2 (\tau_yR')\rt)+\M\lt(\i \varPi^2 (\tau_yS')\rt).
\]
Applying~\eqref{A_defthm_135} to $R'$ and $S'$ yields
\[
\int_{(0,1)^n}\Fw\lt(\varPi^2 (\tau_yQ')\rt)\,dy\le c'(n_2,1)c(n_1)\lt(\M(R)+\M(S)\rt).
\]
Eventually, optimizing over the decompositions $\i Q'=R+\pt S$, we get
\be\label{A_defthm_14}
\int_{(0,1)^n}\Fw\lt(\varPi^2 (\tau_yQ')\rt)\, dy\le c'(n_2,1)c(n_1)\F(\i Q')= c'(n_2,1)c(n_1)Fw(Q').
\ee

The estimates~\eqref{A_defthm_12}\eqref{A_defthm_13}\&\eqref{A_defthm_14} then allow us to extend by continuity the operators $Q'\mapsto (y\mapsto \varPi^i\tau_y Q')$ as mappings,
\[
\begin{array}{rcl}
\varPi^i:\FF^G_{k_1,k_2}(\R^n)&\longto& L^1\loc\lt(\R^n,\FF^G_{k_1+i_1(i),k_2+i_2(i)}(\R^n)\rt),\\
A&\longmapsto&\qquad y\mapsto \varPi^i(\tau_yA).
\end{array}
\]
Moreover, by construction, for $i=0,1,2$, $ y\mapsto \varPi^i(\tau_yA)$ is $1$-periodic in all the coordinate directions. This establishes the point~(0). With the estimates~\eqref{A_defthm_12}\eqref{A_defthm_13}\&\eqref{A_defthm_14} we can pass to the limit in the identities of points~(i) and (ii). The first part of the property~(iii) holds true by continuity. The formula~\eqref{formula_gprimeF} in~(iii) and point~(v) only concern polyhedral tensor chains and have been already established. The property~(vi) on the supports of the tensor chains $\varPi^i(\tau_yA')$ also extends by a diagonal argument. Eventually passing to the limit in~\eqref{A_defthm_9}\eqref{A_defthm_10}\&\eqref{A_defthm_11} and using the lower semicontinuity of the mass, we obtain the estimates of point~(iv). This achieves the proof of the theorem.\hfill$\square$

\section{Restriction of tensor chains to translates of figures}
\label{SD}
In the proof of Proposition~\ref{prop_SlM=MC} in the next section, we consider the restriction on union of intervals of chains and tensor chains which do not have necessarily finite masses. The validity and meaning of this operation are provided by the theorem below. In its statement and it its proof the generalized restriction of $A'$ on the set $x+J$ is denoted $[A']_J(x)$. In the rest of the article, we will use the notation $A'\srestr (x+J)$, which is closer to the notation used for standard restrictions.

We say that $I\sub\R^n$ is an interval of $\R^n$ if $I=I_1\t I_1\t\cdots\t I_n$ where the $I_j$'s are intervals of $\R$ (bounded or unbounded, open, semi-open or closed). A figure is a finite union of intervals. Observe that the set of figures is an algebra (but not a $\sigma$-algebra), it is stable by any intersection, finite union, and by taking complements.

\begin{theorem}\label{thm_restr}
Let $J\sub\R^n$ be a figure. The mapping,
\begin{align*}
\MM^G_{k_1,k_2}(\R^n)&\longto L^1\loc\big(\R^n,\FF^G_{k_1,k_2}(\R^n)\big),\\
A'&\longmapsto\quad x\mapsto A'\restr(x+J),
\end{align*}
extends as a group morphism,
\begin{align*}
\FF^G_{k_1,k_2}(\R^n)&\longto L^1\loc(\R^n,\FF^G_{k_1,k_2}),\\
A'&\longmapsto\quad x\mapsto [A']_J(s).
\end{align*}
This operator enjoys the following properties for any $A',B'\in\FF^G_{k_1,k_2}(\R^n)$, any figures $J$, $\wt J$ and almost every $x\in\R^n$.
\begin{enumerate}[(i)]
\item $[A']_{\R^n}(x)=A'$.
\item $[A']_{J\cap \wt J}(x)=\ds\lt[[A']_J(x)\rt]_{\wt J}(x)$.
\item If $J\cap \wt J=\void$, there holds  $[A']_{J\cup \wt J}=[A']_J(x)+[A']_{\wt J}(x)$.
\item for almost every $y\in\R^n$, $[A']_{y+J}(x)=[A']_J(x+y)$.
\item If $A'$ is supported in $S$ then $[A']_J(x)$ is supported in $S\cap(x+\ov J)$.
\item If $A'_j\in\FF^G_{k_1,k_2}(\R^n)$ converges to $A'$, then $[A'_j]_{J}\to [A']_J\ \text{ in }L^1\loc\big(\R^n,\FF^G_{k_1,k_2}(\R^n)\big)$.
\item As $\ell\up\oo$, we have $\ds\lt(x\mapsto [A']_{\R^n\sm(-\ell,\ell)^n}(x)\rt)\longto0$ in $L^1\loc\big(\R^n,\FF^G_{k_1,k_2}(\R^n)\big)$.
\end{enumerate}
\end{theorem}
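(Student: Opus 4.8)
The plan is to mimic the construction of generalized slices/restrictions that already works in White's theory, adapted to the tensor setting. First I would reduce to the case of a single interval $I$ by using the inclusion-exclusion structure of figures: any figure $J$ is a finite disjoint union (after refining the grid of endpoints) of intervals, so once the operator is built for intervals, properties (ii) and (iii) force its definition on all figures. For a single interval $I=I_1\times\cdots\times I_n$, I would build the restriction coordinate by coordinate, so the key case is $n=1$ and $I=(-\infty,a]$ (half-lines generate all intervals by intersection and complement). On $\MM^G_{k_1,k_2}(\R^n)$ the map $A'\mapsto(x\mapsto A'\restr(x+I))$ is already defined; the task is to show it is continuous in the right topology so that it extends to $\FF^G_{k_1,k_2}(\R^n)$.

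The heart of the argument is the a priori estimate: for a polyhedral tensor chain $Q'$ and a fixed compact $K\subset\R^n$, one wants
\[
\int_K \Fw\!\lt(Q'\restr(x+I)\rt)\,dx \;\les\; C(K,I)\,\Fw(Q').
\]
The standard trick (due to White, and used for $0$-slices in~\cite{GM_tfc}) is to write, for a half-space $I$, the identity $Q'\restr(x+I) = Q'\restr H_x$ where $H_x=\{y_1\le x_1+a\}$, and to relate the average over $x_1$ of $\Fw(Q'\restr H_x)$ to $\Fw(Q')$ plus $\Fw(\pt Q')$-type terms via the coarea/slicing inequality $\int \Fw(Q'\restr\{y_1=t\})\,dt \le \Mw$-flat-norm controls. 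Crucially, the flat-norm version only needs a \emph{decomposition} $Q'=R'+\pt_1 S'+\pt_2 S''+\dots$: restricting the pieces of finite mass to a half-space costs their mass, and restricting $\pt_l S$ to $\{y_1\le t\}$ equals $\pt_l(S\restr\{y_1\le t\}) \pm S\restr\{y_1=t\}$ by the Leibniz rule for restrictions, whose $t$-average is controlled by $\Mw(S)$. Optimizing over decompositions yields the bound with $\Fw$ on the right, uniformly for $x$ in a compact set. This gives a Lipschitz (in the $L^1\loc$ sense) operator on polyhedral tensor chains, hence an extension to all of $\FF^G_{k_1,k_2}(\R^n)$, and item (vi) is then immediate.

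The remaining properties are inherited by density and by the corresponding facts for genuine restrictions of finite-mass tensor chains: (i) is trivial; (ii) and (iii) hold on $\MM^G$ and pass to the limit once both sides are known to be continuous in $A'$, using (vi); (iv) is the translation-covariance of the construction, already visible on $\MM^G$ and stable under the extension; (v) follows from the support property of genuine restrictions together with a diagonal argument over shrinking neighborhoods of $S$, exactly as for point (vi) of Theorem~\ref{thm_defthm}; and (vii) follows by applying the a priori estimate to $Q'\restr(\R^n\sm(-\ell,\ell)^n)$ and noting that, on a fixed compact $K$, for $\ell$ large the flat norm of the best-decomposition pieces landing outside $(-\ell,\ell)^n$ tends to $0$ — more cleanly, approximate $A'$ by a polyhedral $Q'$ within $\delta$ in $\Fw$, observe $[Q']_{\R^n\sm(-\ell,\ell)^n}$ is eventually supported far from $K$ hence $0$ on $K$ once $\ell$ is large, and control the error by the estimate applied to $A'-Q'$.

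The main obstacle is the a priori estimate in $\Fw$-norm, i.e.\ making the ``optimize over decompositions'' argument work simultaneously for a half-space restriction and for all coordinate directions, while keeping the constant locally uniform in $x$; once that inequality is in hand, everything else is density and bookkeeping. A secondary subtlety is the passage from half-lines to general intervals via intersections and complements: complements of figures are figures, but $A'\restr(x+I^c)$ must be \emph{defined} as $A'-[A']_I(x)$, and one must check this is consistent (property (iii) with $\wt J=J^c$), which again only requires the finite-mass case plus continuity.
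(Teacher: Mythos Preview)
Your proposal is correct and follows essentially the same route as the paper. The only difference is packaging: the paper imports the key a priori estimate $\int_\omega\Fw(Q'\restr(x+I))\,dx\le c_\omega\Fw(Q')$ as Lemma~\ref{lem_restr} (quoted from~\cite[Lemma~4.5(i)]{GM_tfc}) and then defines $[A']_I(x)$ via a rapidly converging polyhedral sequence, whereas you sketch the proof of that estimate directly via the Leibniz identity for half-space restrictions and optimization over the $\Fw$-decomposition $Q'=R'+\pt_1 S'+\pt_2 S''+\pt_1\pt_2 T'$; both the estimate and the subsequent density arguments for (i)--(vii) coincide with the paper's.
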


The construction is based on~\cite[Lemma~4.5(i), estimate~(4.9)]{GM_tfc}. This estimate is rephrased as follows.
\begin{lemma}\label{lem_restr}
Let  $Q'_j$ be a sequence of elements of $\PP^G_{k_1,k_2}(\R^n)$, for any $I\sub\R^n$ interval and for any Cartesian product $\om=\om_1\t \om_2\t\cdots\t\om_n\sub\R^n$ such that  for $1\le i\le n$, $\om_i\sub\R$ is measurable with finite length, there holds
\[
\int_\om\sum\Fw\lt(Q'_j\restr(x+I)\rt)\,dx\le c_\om\sum\Fw(Q'_j),
\]
where $c_\om\ge 0$ depends on $\om$.
\end{lemma}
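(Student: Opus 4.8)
The final statement to prove is Lemma~\ref{lem_restr}, which rephrases an estimate from \cite{GM_tfc} in a form convenient for building the generalized restriction operator. My plan is to reduce it directly to the single-interval, single-translation-variable estimate \cite[Lemma~4.5(i), (4.9)]{GM_tfc} and then integrate in the remaining variables.

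\medskip

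\textbf{Setup and reduction.} Fix the interval $I = I_1 \times \cdots \times I_n$ and the product set $\om = \om_1 \times \cdots \times \om_n$. Because restriction to $x+I$ factors through restriction in each coordinate separately (the interval $I$ is a product, and restriction of a tensor polyhedral chain to a product slab is performed coordinate-by-coordinate, as in the construction of $\restr$ in \cite{GM_tfc}), I would write the map $x \mapsto Q' \restr (x+I)$ as a composition of $n$ one-dimensional restriction operations, one in each coordinate direction $x_i$. The cited estimate \cite[(4.9)]{GM_tfc} controls, for a single coordinate direction, the $L^1$ norm over a bounded interval $\om_i$ of the flat norm of the restricted chain in terms of the flat norm of the chain itself, with a constant $c_{\om_i}$ depending only on $|\om_i|$ (and possibly the geometry of $I_i$). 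The plan is to apply this estimate iteratively: restrict first in direction $1$, integrate the resulting bound over $x_1 \in \om_1$, then restrict in direction $2$, integrate over $x_2 \in \om_2$, and so on.

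\medskip

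\textbf{The iteration.} Concretely, I would argue by induction on the number of coordinate directions in which $I$ is a proper (bounded) interval — if $I_i = \R$ the corresponding restriction is the identity and contributes nothing. At each step, one applies \cite[(4.9)]{GM_tfc} to the chain obtained after the previous restrictions (which is again a sum of polyhedral tensor chains, with total flat mass not increased by more than the accumulated constant), using Fubini to peel off one integration variable at a time. Summing over $j$ throughout and using that the estimate is linear in $\sum_j \Fw(\cdot)$, one collects a product constant $c_\om = \prod_i c_{\om_i}$, which depends only on $\om$. The $\eps$-periodicity / translation-covariance built into the one-dimensional estimate ensures the bound is uniform as the base point ranges over $\om_i$.

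\medskip

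\textbf{Main obstacle.} The genuinely delicate point is not the iteration itself but verifying that \cite[(4.9)]{GM_tfc}, which is stated there for a single coordinate slab and (presumably) for ordinary flat chains or for the first factor, applies verbatim after the earlier restrictions have been carried out — i.e. that the class of chains one restricts at step $i+1$ is still within the scope of the cited lemma, and that the coefficient group (now possibly $\FF^G_{k_2}$-valued, via the $\i$-identification) does not interfere. Here I would invoke Remark~\ref{rmk_morphismPH}-type reasoning: the construction of $\restr$ and the estimate \cite[(4.9)]{GM_tfc} are insensitive to the particular complete Abelian normed coefficient group, so the bound propagates through each factor. Once this is checked the rest is bookkeeping with Fubini and linearity, and the stated inequality follows with $c_\om$ the product of the one-dimensional constants.
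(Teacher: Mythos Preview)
The paper does not actually prove this lemma: it introduces it with the sentence ``This estimate is rephrased as follows,'' meaning Lemma~\ref{lem_restr} is simply \cite[Lemma~4.5(i), estimate~(4.9)]{GM_tfc} restated in the form needed here, with no further argument given. So there is nothing to compare your proof against in this paper.

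That said, your sketch is a reasonable way to \emph{derive} the lemma from a one-coordinate version, should that be what \cite[(4.9)]{GM_tfc} actually provides. The factorization of $Q'\restr(x+I)$ through successive coordinate-slab restrictions is correct for polyhedral tensor chains, and the iteration via Fubini is sound; your identification of the only subtle point --- that the intermediate chains remain in a class to which the one-dimensional estimate applies, regardless of the (complete Abelian normed) coefficient group --- is the right thing to flag, and the resolution you propose (the estimate is group-independent, cf.\ the spirit of Remark~\ref{rmk_morphismPH}) is correct. If on the other hand \cite[(4.9)]{GM_tfc} is already stated for a full $n$-dimensional interval and an arbitrary product set $\om$, then your iteration is unnecessary and the lemma is literally a restatement, which is how the paper treats it.
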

In the proof below a set $\om$ of the above form is called a \emph{fvcp} (for ``finite volume cartesian product'').

\begin{proof}[Proof of Theorem~\ref{thm_restr}]~

\noindent
\textit{Step 1. Definition of the restriction operator on intervals.}

Let $A'\in\FF^G_{k_1,k_2}(\R^n)$ and let $Q'_j\in\PP^G_{k_1,k_2}(\R^n)$ such that $Q'_j\to A'$ rapidly (that is $\sum \Fw(Q'_j-A)<\oo$). Denoting $R'_j:=Q'_{j+1}-Q'_j$, we have $\sum \Fw(R'_j)<\oo$. We even assume, 
\[
\Fw(Q'_1)+\sum \Fw(R'_j)\le2\Fw(A').
\]
Applying Lemma~\ref{lem_restr} to $(Q'_1,R'_1,R'_2,\dots)$, we get for any interval $I$ and any \emph{fvcp} $\om$,
\[
\int_\om\lt[\Fw(Q'_1\restr (x+I))+\sum \Fw(R'_j\restr (x+I))\rt]\,dx\le 2c_\om\Fw(A')<\oo.
\]
Setting, 
\[
[A']_I(x):= Q'_1\restr (x+I)+\sum R'_j\restr (x+I),
\]
the series converges normally in $L^1\loc(\R^n,\FF^G_{k_1,k_2}(\R^n))$. Moreover, the limit does not depend on $Q'_j$ and we have the estimate,
\be\label{thm_restr_1}
\int_\om\sum\Fw\lt([A']_I(x)\rt)\,dx\le 2c_\om\sum\Fw(A').
\ee 
Remark that by definition, for finite mass tensor chains, $[A']_I(x)=A\restr(x+I)$. \medskip

\noindent
\textit{Step 2. Properties of the restriction operator on intervals.}

Let $I$ be an interval and let $I_r$ be a finite partition of $I$ in intervals. Since we have $Q'\restr I=\sum Q'\restr I_r$ for every polyhedral chain $Q'$, it follows from the construction that, 
\be\label{thm_restr_2}
[A']_I(x)=\sum [A']_{I_r}(x)\qquad\text{for almost every }x\in\R^n.
\ee
Similarly, if $I$, $I'$ are two intervals, there holds
\be\label{thm_restr_3}
[A']_{I\cap I'}(x)=\lt[[A']_I(x)\rt]_{I'}(x).
\ee
\medskip

\noindent\textit{Step 3. Definition and properties of the restriction operator.}

Let $J$ be a figure and let $I_r$ be a finite partition of $J$ in intervals. We define:
\[
[A']_J(x):=\sum [A']_{I_r}(x).
\] 
Using subpartitions and~\eqref{thm_restr_2}, we see that this definition does not depend on the partition $I_r$.\medskip
 
Let us check the properties stated in the theorem. First, points~(i)\&(iv) are obvious by construction. Point~(ii) follows from~\eqref{thm_restr_3} and~(iii) from~\eqref{thm_restr_2}.\medskip

For point~(v) we assume that $A'$ is supported in some closed set $S$. For any open set $U\supset S$ we can choose the $Q'_j$ all supported in $U$ so that $Q'_j\restr(x+J)$ is supported in $U\cap (x+\ov J)$. Passing to the limit we get that $[A']_J(x)$ is supported in $S\cap (x+\ov J)$. This proves~(v).\medskip

To establish~(vi), we may assume that $J=I$ is an interval. Let $A'_j\to A'$ and let $\om\sub\R^n$ be a \emph{fvcp}. By~\eqref{thm_restr_1}, we have, 
\be\label{thm_restr_4}
\int_\om\Fw\lt([A'_j-A']_I(x)\rt)\,dx\le 2c_\om\Fw(A'_j-A')\ \to\ 0.
\ee
Moreover, $[A'_j]_I(x)-[A']_I(x)=[A'_j-A']_I(x)$ for almost every $x$. Hence,~\eqref{thm_restr_4} implies that $[A'_j]_I(x)\to[A']_I(x)$ in $L^1\loc\big(\R^n,\FF^G_{k_1,k_2}(\R^n)\big)$. This proves~(vi).\medskip

Eventually, let again $\om\sub\R^n$ be a~\emph{fvcp} and assume moreover that $\om$ is bounded. For $\ell>0$ we set $D_\ell:=\R^n\sm(-\ell,\ell)^n$. Let $Q'_j\in\PP^G_{k_1,k_2}(\R^n)$ such that $Q'_j\to A'$.  On the one hand, writing $\R^n$ as the disjoint union of $(-\ell,\ell)^n$ and $D_\ell$ we have by~(i)\&(iii):
\[
[Q'_j-A']_{D_\ell}(x)=(Q'_j-A')-[Q'_j-A']_{(-\ell,\ell)^n}(x)\qquad\text{ for almost every }x\in\R^n.
\]
We deduce from~\eqref{thm_restr_1},
\[
\int_\om\Fw\lt([Q'_j-A']_{D_\ell}(x)\rt)\,dx\le\lt(\h^k(\om)+2c_\om\rt)\Fw(Q'_j-A').
\]
As a consequence,
\be\label{thm_restr_5}
\int_\om\Fw\lt([Q'_j-A']_{D_\ell}(x)\rt)\,dx\to0\qquad\text{ as }j\up\oo\text{, uniformly with respect to }\ell.
\ee
On the other hand for every $j$, $Q'_j$ is compactly supported, hence (recall that $\om$ is bounded) $Q'_j\restr (x+D_\ell)$ vanishes for every $x\in \om$ and for $\ell$ large enough. We write 
\[
[A']_{D_\ell}(x)
=[Q'_j]_{D_\ell}(x)+[Q'_j-A']_{D_\ell}(x)=[Q'_j-A']_{D_\ell}(x)
\]
for $x\in\om$ and $\ell$ large enough, depending on $j$. 
With~\eqref{thm_restr_5} and a diagonal argument, we get
\[
\int_\om\Fw\lt([A']_{D_\ell}(x)\rt)\,dx\to 0\qquad\text{as }\ell\up\oo.
\]
This proves~(vii) and the theorem.
\end{proof}
\begin{remark}~\label{rem_restr}
\begin{enumerate}[(1)]
\item When $A$ has finite mass, we have $A'_J(x)=A'\restr(x+J)$. As announced earlier, in the sequel, we use the notation 
\[
A'\srestr(x+J):=[A']_J(x),
\]
 which, despite property~(iv), is slightly ambiguous.
\item Notice that by making $(n_1,n_2)=(n,0)$, the lemma holds true for flat chains.
\item The mapping $J\longmapsto\lt(x\mapsto A\srestr(x+J)\rt)$ defines a \emph{finitely additive} measure on the algebra of figures with values in $L^1\loc(\R^n,\FF^G_{k_1,k_2}(\R^n))$. 
\item We can complete the list of properties of the theorem by:
\begin{enumerate}
\item[\textit{(viii)}] If $\h^k(J)=0$ then $A'\srestr(x+J)=0$ (for almost every $x\in\R^n$).
\end{enumerate}
\end{enumerate}
\end{remark}
As a corollary, we obtain the following result about supports of chains.
\begin{theorem}\label{thm_supports}
Let $A\in\FF^G_k(\R^n)$ and $S\sub\R^n$ be a closed set. If for every $(k'_1,k'_2)\in D_k$,  $\j_{k'_1,k'_2}A$ is supported in $S$  then $A$ is supported in $S$. 
\end{theorem}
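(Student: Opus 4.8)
\textbf{Proof plan for Theorem~\ref{thm_supports}.}

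The plan is to reduce the statement about a $k$-chain $A$ to the tensor chains $\j_{k'_1,k'_2}A$ via the restriction operator of Theorem~\ref{thm_restr}, exploiting that $\j$ commutes with restrictions and that each component is supported in $S$. Fix an open neighborhood $U$ of $S$; we must produce polyhedral chains supported in $U$ converging to $A$. The first step is to localize: using property (vii) of Theorem~\ref{thm_restr} (in the classical case $(n_1,n_2)=(n,0)$, legitimate by Remark~\ref{rem_restr}(2)), we have $A\srestr(x+(-\ell,\ell)^n)\to A$ in $L^1\loc$ as $\ell\up\oo$, so it suffices to approximate each localized piece $A\srestr(x+(-\ell,\ell)^n)$, i.e.\ we may assume $S$ is compact (replace $S$ by $S\cap\ov{x+(-\ell,\ell)^n}$ and note $\j$ still has components supported there, since $\j$ commutes with restriction to figures).

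The heart of the argument is the deformation theorem applied at small scale $\eps$. Apply Theorem~\ref{thm_defWhite} to $A$: for almost every $y$, $\tau_yA=P_\eps(\tau_yA)+\pt H_\eps(\tau_yA)+H_\eps(\pt\tau_yA)$, with the average-mass estimates and, crucially, property (vii): if $A$ is supported in $S$ then $P_\eps(\tau_yA)$ and $H_\eps(\tau_yA)$ are supported in $(S+y)+[-\eps/2,\eps/2]^n$. The issue is that $A$ need not have finite mass, so $P_\eps(\tau_yA)$ and $H_\eps(\tau_yA)$ are only flat chains, not polyhedral, and $H_\eps(\pt\tau_yA)$ is genuinely present. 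To handle this, I would instead work with the tensor components. By Remark~\ref{rem_anticom}, the pair $(\wh P,\wh H)=(\varPi^0,\varPi^1+\varPi^2)$ extended to $\FF^G_{k_1,k_2}$ satisfies $\wh P=P$ and the same properties as $(P,H)$, and by Theorem~\ref{thm_defthm}(iii) the intermediate objects $\varPi^0(\tau_yQ')$ produced from polyhedral tensor chains are themselves polyhedral tensor chains, all supported in the $\eps/2$-neighborhood of $(\supp A'+y)$ by Theorem~\ref{thm_defthm}(vi). Choosing $\eps$ small enough that $(S+y)+[-\eps/2,\eps/2]^n\sub U$ for all $y$ in a fixed bounded set (possible since $S$ is now compact and $U$ open), the deformed pieces of $\j_{k'_1,k'_2}A$ are polyhedral tensor chains supported in $U$.

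Now assemble: since each $\j_{k'_1,k'_2}A$ is supported in $S$, for each $(k'_1,k'_2)\in D_k$ and each small $\eps=\eps_j\dw0$ choose, by definition of support, polyhedral tensor chains approximating $\j_{k'_1,k'_2}A$ supported in $U$; deform them by $\varPi^0_{\eps_j}$ (which by Corollary~\ref{coro_defthm} moves them back towards $\j_{k'_1,k'_2}A$ as $j\up\oo$, in average over translations $y$) to obtain polyhedral tensor chains $Q'_{j}(y,k'_1,k'_2)$ supported in $U$ with $Q'_{j}(y,k'_1,k'_2)\to\j_{k'_1,k'_2}A$ in $L^1\loc(\R^n,\FF^G_{k'_1,k'_2})$. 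Form $Q_j(y):=\sum_{(k'_1,k'_2)\in D_k}\j_{k'_1,k'_2}^{-1}\big(Q'_j(y,k'_1,k'_2)\big)$ on the subgroups where $\j_{k'_1,k'_2}$ is invertible; more simply, build a polyhedral $k$-chain $Q_j(y)$ supported in $U$ with $\j_{k'_1,k'_2}Q_j(y)\to\j_{k'_1,k'_2}A$ for every $(k'_1,k'_2)$. Since $\j=\bigoplus_{(k'_1,k'_2)\in D_k}\j_{k'_1,k'_2}$ is injective and continuous (it is an isomorphism onto its image with continuous inverse on the polyhedral level, or one argues directly), convergence of all components of $\j Q_j(y)$ to the corresponding components of $\j A$ forces $Q_j(y)\to A$ in $L^1\loc$, hence $A\srestr(x+(-\ell,\ell)^n)\to A$ is approximated by polyhedral chains supported in $U$; letting $\ell\up\oo$ and diagonalizing over $j,\ell$ concludes that $A$ is supported in $S$.

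\textbf{Main obstacle.} The delicate point is that $A$ has no finiteness assumption, so one cannot directly deform $A$ itself into polyhedral chains; everything must be routed through the components $\j_{k'_1,k'_2}A$, which \emph{are} known to be approximable by polyhedral tensor chains supported in $U$ (by hypothesis on their supports), and then one must (i) deform these while preserving the support-in-$U$ property up to an $\eps/2$ error controlled using compactness of $S$, and (ii) use injectivity of $\j$ together with convergence of every component to upgrade to convergence of $A$ in $L^1\loc$. Verifying that one can simultaneously make all $|D_k|$ components converge \emph{and} keep a single polyhedral chain $Q_j(y)$ with all those components is the bookkeeping crux; it relies on the decomposition $\PP^G_k=\bigoplus_{(k'_1,k'_2)\in D_k}\j_{k'_1,k'_2}^{-1}(\PP^G_{k'_1,k'_2})$ established in~\cite{GM_tfc} together with Theorem~\ref{thm_defthm}(iii).
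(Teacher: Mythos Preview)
Your reduction to compact $S$ via Theorem~\ref{thm_restr}(vii) and a diagonal argument is fine and matches the paper's Step~2. The genuine gap is in your compact case.

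You want to build polyhedral $k$-chains $Q_j(y)$ supported in $U$ with $\j_{k'_1,k'_2}Q_j(y)\to\j_{k'_1,k'_2}A$ for every $(k'_1,k'_2)$, and then conclude $Q_j(y)\to A$. The last implication is exactly what you cannot take for granted: injectivity and (forward) continuity of $\j$ do \emph{not} yield continuity of $\j^{-1}$ on its image. Your parenthetical ``continuous inverse on the polyhedral level, or one argues directly'' does not close the gap: even if an estimate $\F(Q)\le C\sum\Fw(\j_{k'_1,k'_2}Q)$ held for polyhedral $Q$, passing to the limit requires it on the closure, and that is precisely one of the open issues the paper works around (it is only established for \emph{normal} chains, via Theorem~\ref{thm_MSlj=MSl} and the slicing mass). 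So your assembly step is unjustified. The deformation detour via $\varPi^0_\eps$ is also confused: once you have polyhedral tensor chains supported in $U$ converging to each $\j_{k'_1,k'_2}A$ (which you do, by hypothesis), deforming them further does not help with the missing inverse continuity.

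The paper's argument avoids this entirely and is much shorter. For compact $S$, pick an open figure $J$ with $S\sub J\sub\ov J\sub U$ and $\eps>0$ such that $x+J\sub U$ and $S\sub x+J$ for all $x\in B_\eps$. Since $\j$ commutes with the restriction of Theorem~\ref{thm_restr} (by continuity from the finite mass case), the hypothesis gives $\j_{k'_1,k'_2}[A\srestr(x+J^c)]=(\j_{k'_1,k'_2}A)\srestr(x+J^c)=0$ for every $(k'_1,k'_2)$ and almost every $x\in B_\eps$. Now injectivity of $\j$ alone yields $A\srestr(x+J^c)=0$, hence $A=A\srestr(x+J)$, which by Theorem~\ref{thm_restr}(v) is supported in $x+\ov J\sub U$. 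No approximating sequence needs to be built by hand, and no inverse continuity of $\j$ is invoked.
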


\begin{proof}
Let $A\in\FF^G_k(\R^n)$. We claim that for every figure $J$, every $(k'_1,k'_2)\in D_k$ and almost every $x\in\R^n$,
\be\label{thm_supports_1}
(\j_{k'_1,k'_2}A)\srestr(x+J)= \j_{k'_1,k'_2}[A\srestr(x+J)].
\ee
This relation is true when $A$ is a finite mass chain. Applying this to a sequence of finite mass chains converging to $A$ and passing to the limit, we obtain by Theorem~\ref{thm_restr}(vi) and continuity of $\j$ that~\eqref{thm_supports_1} holds in the he general case.  \medskip

Now, let $S\sub\R^n$ be a closed set an assume that the tensor chains $\j_{k'_1,k'_2}A$ are all supported in $S$.  Let $U$ be an open neighborhood of $S$. We have to establish that $A$ is the limit of a sequence of polyhedral chains all supported in $U$.\medskip

\noindent
\textit{Step 1.} We first assume that $S$ is compact. There exists a relatively compact open set $V$ such that $S\sub V\sub \ov V \sub U$. Since the collection of open intervals whose closure is contained in $U$ covers $\ov V$, there exists an open figure $J$ such that $\ov V\sub J\sub \ov J\sub U$. Therefore, there exists $\eps>0$ such that $V\sub x+J\sub U$ for every $x\in B_\eps$.\\
Let us fix $(k'_1,k'_2)\in D_k$. By assumption, there exists a sequence $Q'_j\in\PP^G_{k'_1,k'_2}(\R^n)$ all supported in $V$ such that $Q'_j\to \j_{k'_1,k'_2}A$. Denoting $J^c$ the complement of $J$ in $\R^n$, for every $x\in B_\eps$, we have $x+J^c\sub \R^n\sm V$, hence  $Q'_j\restr(x+J^c)=0$. Sending $j$ to $\oo$, we get from Theorem~\ref{thm_restr}(vi), that for almost every $x\in B_\eps$,
\[
0=[\j_{k'_1,k'_2}A]\srestr(x+J^c)\st{\eqref{thm_supports_1}}=\j_{k'_1,k'_2}[A\srestr(x+J^c)].
\]
Since this holds true for every $(k'_1,k'_2)\in D_k$, we deduce from the injectivity of $\j$ that $A\srestr(x+J^c)$ vanishes for almost every $x\in B_\eps$. By Theorem~\ref{thm_restr}(i)\&(iii), we get for almost every $x\in B_\eps$,
\[
A=A\srestr\R^n=A\srestr(x+J)+A\srestr(x+J^c)=A\srestr(x+J),
\]
and by Theorem~\ref{thm_restr}(v), we see that $A$ is supported in $x+J\sub U$. The result is established if $S$ is compact.\medskip

\noindent
\textit{Step 2.} In the general case, we set for  $\ell>0$ and  $x\in\R^n$, 
\[
A_\ell(x):=A\srestr (x+(-\ell,\ell)^n),\qquad\qquad B_\ell(x):=A\srestr \lt(x+\R^n\sm(-\ell,\ell)^n\rt).
\]

\noindent
By Theorem~\ref{thm_restr}(iii) we have 
\be\label{thm_supports_2}
A=A_\ell(x)+B_\ell(x)\quad\text{for almost every }x.
\ee
We use Theorem~\ref{thm_restr}(vii) to treat the last term. We have
\be\label{thm_supports_3}
\int_{(0,1)^n}\F\lt(B_\ell(x)\rt)\,dx\to 0\qquad \text{as }\ell\up\oo.
\ee
Next, by~\eqref{thm_supports_1}, there holds for almost every $x\in\R^n$,
\[
j_{k'_1,k'_2}A_\ell(x)=(\j_{k'_1,k'_2}A)\srestr (x+(-\ell,\ell)^n),
\]
and by Theorem~\ref{thm_restr}(v), we get that for almost every $x\in\R^n$ and every $(k'_1,k'_2)\in D_k$ the tensor chain $j_{k'_1,k'_2}A_\ell(x)$ is supported in $S\cap (x+[-\ell,\ell]^n)$ which is a compact subset of $S$. By Step~1, we get that $A_\ell(x)$ is supported in $S$ for almost every $x\in\R^n$. That is:
\begin{multline}\label{thm_supports_4}
\text{There exists a sequence } Q_j^{(\ell)}\in\PP^G_{k_1,k_2}(\R^n)\text{ with }\supp Q_j^{(\ell)}\sub U\text{ such that}\\
Q_j^{(\ell)}\to A_\ell(x)\text{ for every }\ell>0\text{ and almost every }x\in\R^n.
\end{multline}

Using a diagonal argument, we deduce from~\eqref{thm_supports_2}\eqref{thm_supports_3}\&\eqref{thm_supports_4}  that there exist sequences $x_i\in\R^n$ and $\ell_i\up\oo$ such that for every $i$, $\wh Q_i:=Q^{\ell_i}_{j(\ell_i)}(x_i)\in\PP^G_k(\R^n)$ is supported in $U$ and $\wh Q_i\to A$. This proves the theorem.
\end{proof}

\section{Slicing mass}
\label{SE}
Let us define the (coordinate) slicing mass.
\begin{definition}\label{def_SlM}
The (coordinate) slicing mass of a $k$-chain $A$ or of a $(k_1,k_2)$-chain $A'$ in $\R^n$ is given by,
\[
\M_{\Sl}(A):=\sum_{\g\in I^n_k} \int_{\X_\g} \M(A\cap\X_{\ov\g}(x) )\,dx,\qquad\qquad
\Mw_{\Sl}(A'):=\!\!\!\!\!\sum_{\substack{\g\in I^n_k,\\(|\g^1|,|\g^2|)=(k_1,k_2)}}\!\!\!\!\! \int_{\X_\g} \Mw(A'\cap\X_{\ov\g}(x) )\,dx.
\]
\end{definition}
For flat chains and as a corollary of the deformation theorem of White, if $\M_{\Sl}(A)=0$ then $A=0$, see~\cite[Theorem~3.2]{White1999-2}. However, it was not clear whether
\[
\M_{\Sl}(A)<\oo\implies\M(A)<\oo.
\]
Theorem~\ref{thm_M_et_SlM} below settles this issue: the (coordinate) slicing mass is equivalent to the usual mass on the whole group of (tensor) chains. We obtain this result \textit{via} the identities of Proposition~\ref{prop_SlM=MC} which involve the coordinate masses $\M_{\CC}$. These latter are defined as the usual mass, except that we substitute for $\PP^G_k(\R^n)$ the subgroup $\CP^G_k(\R^n)$ and for $\PP^G_{k_1,k_2}(\R^n)$ the subgroup $\CP^G_{k_1,k_2}(\R^n)$).

\begin{definition} We set,
\begin{align*}
\CP^G_k(\R^n)
&:=\big\{Q\in\PP^G_k(\R^n):\text{for }0\le j\le n\text{ and for each }j\text{-face }F\text{ of }\supp Q\\
&\phantom{:=\big\{Q\in\PP^G_k} 
\text{the affine $j$-space spanned by }F\text{ is of the form } \X_\beta(x)  \text{ with } |\beta|=j \big\},\\
\\
\CP^G_{k_1,k_2}(\R^n)
&:=\CP^G_k(\R^n)\cap\TP^G_{k_1,k_2}(\R^n)\\
&=\big\{Q\in\CP^G_k(\R^n):\text{each }k\text{-face of }\supp Q\text{ is of the form }F^1\we F^2,\\
&\phantom{=\big\{Q\in\CP^G_k(\R^n):\text{each }k\text{-face}}
\text{with }F^1\text{ a }k_1\text{-face of }\X_\a\text{ and }F^2\text{ a }k_2\text{-face of }\X_\ova\big\}.
\end{align*}
\end{definition}
Notice that as a consequence of the deformation theorems, $\CP^G_k(\R^n)$ is dense in $\FF^G_k(\R^n)$ and $\CP^G_{k_1,k_2}(\R^n)$ is dense in $\FF^G_{k_1,k_2}(\R^n)$.

\begin{definition} 
The coordinate mass of $A\in\FF^G_k(\R^n)$ or $A'\in\FF^G_{k_1,k_2}(\R^n)$ is then given by:
\begin{align*}
\M_{\CC}(A)&:=\inf\lt\{\liminf \M(Q_j):Q_j\in\CP^G_k(\R^n),\ Q_j\st{\F}\to A\rt\},\\
\Mw_{\CC}(A')&:=\inf\lt\{\liminf \M(Q'_j):Q'_j\in\CP^G_{k_1,k_2}(\R^n),\ Q'_j\st{\Fw}\to A'\rt\}.
\end{align*}
\end{definition}
By definition we have $\M_{\CC}\ge \M$ and $\Mw_{\CC}\ge \Mw$.  It turns out that these coordinate masses coincide with the coordinate slicing masses.
\begin{proposition}\label{prop_SlM=MC}
For every $A\in\FF^G_k(\R^n)$  and every $A'\in\FF^G_{k_1,k_2}(\R^n)$,
\[
\M_{\Sl}(A)=\M_{\CC}(A),\qquad\qquad\Mw_{\Sl}(A')=\Mw_{\CC}(A').
\]
\end{proposition}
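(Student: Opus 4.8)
The plan is to prove the two inequalities $\M_{\CC}\le\M_{\Sl}$ and $\M_{\Sl}\le\M_{\CC}$ separately, working with chains (the tensor case being identical after replacing $\PP^G_k$, $\CP^G_k$, $\X_\g$-slices, etc.\ by their tensor analogues, using that $\Mw_{\Sl}$ only sums over $\g$ with $(|\g^1|,|\g^2|)=(k_1,k_2)$ and that $\Pi^i$ preserve polyhedral tensor chains). For the inequality $\M_{\CC}(A)\le\M_{\Sl}(A)$, one may assume $\M_{\Sl}(A)<\oo$. The idea is to use the deformation theorem to produce coordinate polyhedral approximations whose mass is controlled by $\M_{\Sl}$ rather than by $\M$. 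More precisely, I would run White's deformation (Theorem~\ref{thm_defWhite}) on a grid $\GG^\eps$ with a generic translation $\tau_{\eps z}$, and observe from the formula~\eqref{formula_gF}, $g_F(y)=\chi\big([(\tau_yA)\cap\la\wh F\ra]\restr\wh F\big)$, that the coefficient on a $k$-face $F$ only depends on the $0$-dimensional slice of $\tau_yA$ along the $(n-k)$-plane $\la\wh F\ra$, which (since $F$ is a coordinate face) is a translate of some $\X_{\ov\g}$ with $|\g|=k$. Summing $|g_F(y)|_G$ over the faces $F$ parallel to a fixed direction $\g$ and integrating in $y$ reproduces, up to the constant from Theorem~\ref{thm_defWhite}(iv) applied slicewise, the quantity $\int_{\X_\g}\M(A\cap\X_{\ov\g}(x))\,dx$; summing over $\g\in I^n_k$ gives $\int\M(P_\eps\tau_{\eps z}A)\,dz\lesssim\M_{\Sl}(A)$. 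Since $P_\eps\tau_{\eps z}A$ is a coordinate polyhedral chain and $P_\eps\tau_{\eps z}A\to A$ in $\F$-norm as $\eps\dw0$ (Corollary~\ref{coro_defWhite}), a diagonal/subsequence argument combined with lower semicontinuity of $\M$ gives a sequence in $\CP^G_k(\R^n)$ converging to $A$ with $\liminf\M\le\M_{\Sl}(A)$, i.e.\ $\M_{\CC}(A)\le\M_{\Sl}(A)$.

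For the reverse inequality $\M_{\Sl}(A)\le\M_{\CC}(A)$, I would first establish it on coordinate polyhedral chains and then pass to the limit. On $Q\in\CP^G_k(\R^n)$, the slicing mass is computed exactly: for each coordinate direction $\g$, the slices $Q\cap\X_{\ov\g}(x)$ are $0$-chains supported on the $\X_\g$-components of $Q$'s faces, and $\int_{\X_\g}\M(Q\cap\X_{\ov\g}(x))\,dx$ is precisely the total mass of the part of $Q$ carried by $k$-faces orthogonal to $\X_{\ov\g}$, i.e.\ parallel to $\X_\g$; summing over $\g$ recovers $\M(Q)$ exactly (each face is counted once, by the coordinate structure), so $\M_{\Sl}(Q)=\M(Q)$ for $Q\in\CP^G_k(\R^n)$. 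It then remains to check that $\M_{\Sl}$ is lower semicontinuous along $\F$-convergent sequences: if $Q_j\st{\F}\to A$ then $\M_{\Sl}(A)\le\liminf\M_{\Sl}(Q_j)$. This follows because, by the generalized restriction/slicing operator of Theorem~\ref{thm_restr} (with $J$ a coordinate slab, or via the standard $0$-slicing of flat chains), the slice map $x\mapsto A\cap\X_{\ov\g}(x)$ is the $L^1\loc$-limit of $x\mapsto Q_j\cap\X_{\ov\g}(x)$, and mass is lower semicontinuous under $\F$-convergence together with Fatou's lemma in the integral over $\X_\g$. Taking an arbitrary $\CP$-approximation $Q_j\st{\F}\to A$ realizing (nearly) $\M_{\CC}(A)$ then yields $\M_{\Sl}(A)\le\liminf\M_{\Sl}(Q_j)=\liminf\M(Q_j)$, hence $\M_{\Sl}(A)\le\M_{\CC}(A)$.

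I expect the main obstacle to be the first inequality, specifically the bookkeeping that identifies the slicewise-integrated coefficient sum $\sum_F\int|g_F(y)|_G$ with $\M_{\Sl}(A)$ up to a controlled constant. One must carefully use the structure of White's construction: the coefficient $g_F(y)$ really is a $\chi$ of a $0$-slice of $\tau_yA$ along the dual plane $\la\wh F\ra$, and the deformation-theorem mass estimate~\eqref{estimmassdefo} applied to these $0$-chains (rather than to $A$ itself) is what converts $\int\M(P_\eps\tau_{\eps z}A)\,dz$ into a bound by $\M_{\Sl}(A)$ with a dimensional constant. Care is also needed because $g_F$ is only given by~\eqref{formula_gF} on polyhedral chains, so the estimate should be obtained first for polyhedral $A$ (where the slices are genuine $0$-polyhedral chains and $\chi$ acts literally by summing coefficients) and then extended by the $\F$-continuity of $P_\eps$, using that both $\M_{\Sl}$ and $\liminf\M(P_\eps(\cdot))$ are lower semicontinuous. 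The tensor case adds the extra point that one applies the two-step deformation of Theorem~\ref{thm_defthm}, whose $\varPi^0$-operator also has coefficients given by a $\chi$ of a coordinate $0$-slice via~\eqref{formula_gprimeF}, so the same argument goes through verbatim with $D_k$-restricted sums.
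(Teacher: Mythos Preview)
Your overall strategy and the direction $\M_{\Sl}\le\M_{\CC}$ match the paper's. The gap is in the direction $\M_{\CC}\le\M_{\Sl}$, precisely at the step you flag as the main obstacle.

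You propose to establish $\int_{(0,1)^n}\M(P_\eps\tau_{\eps z}A)\,dz\le\M_{\Sl}(A)$ first for polyhedral $A$ (where~\eqref{formula_gF} applies literally) and then ``extend by the $\F$-continuity of $P_\eps$, using that both $\M_{\Sl}$ and $\liminf\M(P_\eps(\cdot))$ are lower semicontinuous''. This does not close: if $Q_j\to A$, Fatou and lower semicontinuity of the mass give
\[
\int_{(0,1)^n}\M(P_\eps\tau_{\eps z}A)\,dz\ \le\ \liminf_j\int_{(0,1)^n}\M(P_\eps\tau_{\eps z}Q_j)\,dz\ \le\ \liminf_j\M_{\Sl}(Q_j),
\]
but lower semicontinuity of $\M_{\Sl}$ only says $\M_{\Sl}(A)\le\liminf_j\M_{\Sl}(Q_j)$, which is the wrong direction. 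To conclude you would need a sequence $Q_j\to A$ with $\M_{\Sl}(Q_j)\to\M_{\Sl}(A)$, and the existence of such a sequence is essentially what you are trying to prove. The paper makes exactly this point in Remark~\ref{rem_SlM=MC}(a).

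The paper's fix is to extend~\eqref{formula_gF} itself to arbitrary $A$: one passes to the limit in the formula (both sides being $\F$-continuous in $A$, in the $L^1\loc$ sense), obtaining
\[
P_\eps\tau_yA=\sum_F\chi\!\lt(\lt[(\tau_yA)\cap\la\wh F\ra\rt]\srestr\wh F\rt)F
\]
for every $A$, where $\srestr$ is the generalized restriction of Theorem~\ref{thm_restr} (this is precisely why that theorem is needed). One then bounds $|\chi(\cd)|_G\le\M(\cd)$ directly --- not via~\eqref{estimmassdefo}; the Fubini bookkeeping produces constant $1$, which is required for equality --- and since $\M_{\Sl}(A)<\oo$ the $0$-slices have finite mass a.e.\ so that $\srestr$ agrees with the ordinary restriction. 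This yields $\int_{(0,1)^n}\M(P_\eps\tau_{\eps z}A)\,dz\le\M_{\Sl}(A)$ for general $A$ without any approximation step. You already invoke Theorem~\ref{thm_restr} for the other inequality; it is needed here as well, and in a more essential way.
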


In the proof, we use the operatror $\chiw$ introduced in~\cite[Definition~5.3]{GM_tfc} which extends $\chi$ to $(0,0)$-chains. Let us recall its definition.
\[
\chiw(A'):=\chi(\chi(\i A')),\qquad\text{for }A'\in\FF^G_{0,0}(\R^n).
\] 
As stated in \cite[Proposition~5.4]{GM_tfc} we have:
\begin{align}
\label{form_chiw_1} 
|\chiw(A')|_G\le\Fw(A')\le\Mw(A')\qquad\text{for every }A'\in\FF^G_{0,0}(\X_\b),\\
\label{form_chiw_2}
\chi(A)=\chiw(\j_{0,0}A)\qquad\text{for every }A\in\FF^G_0(\X_\b).
\end{align}
In particular, $\chiw$ is 1-Lipschitz continuous. Also notice that for a polyhedral $0$-chain, we have $\j_{0,0}A=A$ hence $\chiw(A)=\chi(A)$.

\begin{proof}[Proof of Proposition~\ref{prop_SlM=MC}]
Since flat chains are a particular case of tensor chains with $\{n_1,n_2\}=\{0,n\}$, we only consider these latter. Let $A'\in\FF^G_{k_1,k_2}(\R^n)$.

For any coordinate polyhedral $(k_1,k_2)$-chain $Q$, there holds $\M_{\Sl}(Q')=\M(Q')$. Now for $A'\in\FF^G_{k_1,k_2}(\R^n)$, taking a sequence $Q'_j\in\PP^G_{k_1,k_2}(\R^n)$ such that $Q'_j\to A'$ and $\M(Q'_j)\to\M_{\CC}(A')$, we deduce by continuity of slicing and lower semi-continuity of the mass, that 
\be\label{prop_SlM=MC_1}
\Mw_{\Sl}(A')\le\Mw_{\CC}(A').
\ee
Let us establish the opposite inequality. Without loss of generality we assume that $\Mw_{\Sl}(A')<\oo$. By Theorem~\ref{thm_defthm}(iii), we have for $\eps>0$ and almost every $y\in\R^n$ that $\varPi^0_\eps\tau_yA'$ is a limit in mass\footnote{Actually $\varPi^0_\eps\tau_yA'\in \CP^G_{k_1,k_2}(\R^n)$ if $A'$ has compact support.} of elements of $\CP^G_{k_1,k_2}(\R^n)$. We deduce from Corollary~\ref{coro_defthm} that, 
\be\label{prop_SlM=MC_2}
\Mw_{\CC}(A')\le\liminf_{\eps\dw0 }\int_{(0,1)^n}\M(\varPi^0_\eps\tau_{\eps z}A')\,dz.
\ee
Now, for $\eps>0$ fixed and $Q'\in\PP^G_{k_1,k_2}(\R^n)$, we have by formula~\eqref{formula_gprimeF}, for almost every $y\in\R^n$,
\be\label{prop_SlM=MC_3}
\varPi^0_\eps\tau_yQ'=\sum_FF\we\chi\lt([(\tau_yQ')\cap \la\wh F\ra)]\restr \wh F\rt),
\ee
where the sum runs over the $k$-faces of $\GG^\eps$. Let $Q'_j\in\PP^G_{k_1,k_2}(\R^n)$ such that $Q'_j\to A'$. Applying~\eqref{prop_SlM=MC_3} with $Q'=Q'_j$ and sending $j\to\oo$, we get, by Theorem~\ref{thm_restr}(vii) and continuity of slicing, $\chiw$ and $\varPi^0_\eps$,
\begin{equation}\label{extension}
\varPi^0_\eps\tau_yA'=\sum_FF\we\chiw\lt([(\tau_yA')\cap \la\wh F\ra)]\srestr \wh F\rt).
\end{equation}
Using estimate~\eqref{form_chiw_1} in the form $|\chiw(B')|_G\le\Mw(B')$ for $B'\in\FF^G_{0,0}(\R^n)$, this leads to
\[
\int_{(0,1)^n}\M(\varPi^0_\eps\tau_{\eps z}A')\,dz\le\int_{(0,1)^n}\sum_F\h^k(F)\Mw\lt((\tau_{\eps z}A')\cap \la\wh F\ra)\srestr \wh F\rt)\,dz.
\]
Recalling that $\Mw_{\Sl}(A')<\oo$, the $0$-slices $(\tau_{\eps z}A')\cap \la\wh F\ra$ have finite $\Mw$-mass for almost every $z\in(0,1)^n$ and we can rewrite the previous inequality as, 
\be\label{prop_SlM=MC_4}
\int_{(0,1)^n}\M(\varPi^0_\eps\tau_{\eps z}A')\,dz\le\int_{(0,1)^n}\sum_F\h^k(F)\Mw\lt((\tau_{\eps z}A')\cap \la\wh F\ra)\restr \wh F\rt)\,dz.
\ee
Since the collection of dual faces $\wh F$ is a partition of the union of:
\begin{enumerate}[($*$)]
\item the affine $(n-k)$-plane $\{0\}^k\t\R^{n_k}$, 
\item its translates by $\eps m$ for $m\in\Z^k$,
\item all the affine planes obtained from these former by permuting the coordinates, 
\end{enumerate}
we deduce by Fubini that the right-hand side of~\eqref{prop_SlM=MC_4} is equal to $\Mw_{\Sl}(A')$. Hence,
\[
\int_{(0,1)^n}\M(\varPi^0_\eps\tau_{\eps z}A')\,dz\le\Mw_{\Sl}(A').
\]
With~\eqref{prop_SlM=MC_2} we get $\Mw_{\CC}(A')\le\Mw_{\Sl}(A')$ and recalling~\eqref{prop_SlM=MC_1}, we conclude that $\Mw_{\Sl}(A')=\Mw_{\CC}(A')$.
\end{proof}

\begin{remark}\label{rem_SlM=MC}~

\noindent
(a) In the above proof, the role of the operator $\srestr$ of Theorem~\ref{thm_restr} is essential. Indeed, taking the mass in~\eqref{prop_SlM=MC_3} before passing to the limit leads to,
\[
\int_{(0,1)^n}\M(\varPi^0_\eps\tau_{\eps z}Q'_j)\,dz\le\Mw_{\Sl}(Q'_j).
\]
Sending $j$ to $\oo$ and then $\eps$ to 0 yield $\Mw_{\CC}(A')\le\liminf\Mw_{\Sl}(Q'_j)$. To conclude, we need a sequence of polyhedral tensor chains $Q'_j$ such that $Q'_j\to A$ and $\Mw_{\Sl}(Q'_j)\to\Mw_{\Sl}(A')$. However at this point of the demonstration we do not know whether such sequence exists. Let us mention that if $2\le k\le n-2$, we do not have  
\[
Q_j\to A\text{ with }\M(Q_j)\to\M(A)\quad\ \implies\ \quad\M_{\Sl}(Q_j)\to\M_{\Sl}(A).
\]

\noindent 
(b) Notice that \eqref{extension} extends the formula~\eqref{formula_gprimeF} to $\FF_{k_1,k_2}^G(\R^n)$.\medskip

\noindent
(c) It follows from the proof that:
\[
\M_{\Sl}(A)=\lim_{\eps\dw0}\int_{(0,1)^n}\M(P_\eps\tau_{\eps z}A)\,dz,
\qquad\qquad
\Mw_{\Sl}(A')=\lim_{\eps\dw0}\int_{(0,1)^n}\Mw(\varPi^0_\eps\tau_{\eps z}A')\,dz.
\smallskip
\]

\noindent
(d) The identities of Proposition~\ref{prop_SlM=MC} concern the anisotropic ($\ell^1$-like) $\Mw_{\CC}$ mass. We conjecture that it has a counterpart with the usual mass. Namely, for $A\in \MM^G_k(\R^n)$, 
\[
\int_{\Gr(k,\R^n)} \int_\X \M(A\cap(x+\X^\perp))\,d\h^k(x)\,d\nu(\X)=c(n,k) \M(A),
\]
where $\Gr(k,\R^n)$  is the Grassmannian of $k$-planes in $\R^n$ and $\nu$ is the Borel probability measure on $\Gr(k,\R^n)$ invariant by the action of $\OO(\R^n)$. Such identity holds true for rectifiable chains as a consequence of \cite[3.2.26]{Federer} (see also \cite[(27)]{DePH} or \cite[Lemma 3.1]{CdRMS2017}) and the constant $c(n,k)$ can be determined by taking for $A$ an oriented $k$-disk in $\R^n$.\medskip

\noindent
(e) For $\g\in I^n_k$ we define the group of polyhedral $\g$-chain as,
\[
\CP^G_\g(\R^n):=\lt\{Q\in\CP^G_k(\R^n) : \text{each }j\text{-face of }\supp Q\text{ spans some }\X_\beta(x)\text{ with }\beta\sub\g\rt\}.
\]
Denoting $\Pi_\g$ the natural projection of $\CP^G_k(\R^n)$ on $\CP^G_\g(\R^n)$, we set, 
\begin{align*}
\M_{\CC,\g}(A)&:=\inf\lt\{\liminf \M(\Pi_\g Q_j):Q_j\in\CP^G_\g(\R^n),\ Q_j\st{\F}\to A\rt\},\\
\Mw_{\CC,\g}(A')&:=\inf\lt\{\liminf \M(\Pi_\g Q_j):Q_j\in\CP^G_\g(\R^n),\ Q_j\st{\Fw}\to A'\rt\}.
\end{align*}
We have, for the same reasons as~\eqref{prop_SlM=MC_1},
\begin{align*}
\M_{\Sl,\g}(A)&:=\int_{\X_\g} \M(A\cap\X_{\ov\g}(x) )\,dx\le\M_{\CC,\g}(A),\\
\Mw_{\Sl,\g}(A')&:=\int_{\X_\g} \Mw(A'\cap\X_{\ov\g}(x))\,dx\le\Mw_{\CC,\g}(A').
\end{align*}
By definition,
\[
\M_{\Sl}(A)=\sum_\g\M_{\Sl,\g}(A),\qquad\qquad\Mw_{\Sl}(A')=\sum_\g  \Mw_{\Sl,\g}(A'),
\]
and we obtain,
\[
\M_{\Sl}(A)\le\sum_\g\M_{\CC,\g}(A)\le\M_{\CC}(A),\qquad\qquad\Mw_{\Sl}(A')\le\sum_\g\Mw_{\CC,\g}(A')\le\Mw_{\CC}(A').
\]
The identities of the proposition and~(c) then lead to
\begin{align*}
\M_{\Sl,\g}(A)&=\M_{\CC,\g}(A)=\lim_{\eps\dw0}\int_{(0,1)^n}\M(\Pi_\g P_\eps\tau_{\eps z}A)\,dz,\\
\Mw_{\Sl,\g}(A')&=\Mw_{\CC,\g}(A')=\lim_{\eps\dw0}\int_{(0,1)^n}\M(\Pi_\g\varPi^0_\eps\tau_{\eps z}A')\,dz.
\end{align*}
\end{remark}

With the preceding result, we easily establish the equivalence of the mass and of the slicing mass as stated in Theorem~\ref{thm_M_et_SlM}.

\begin{proof}[Proof of Theorem~\ref{thm_M_et_SlM}]
Let $A\in\FF^G_k(\R^n)$ and $A'\in\FF^G_{k_1,k_2}(\R^n)$. The identities of Proposition~\ref{prop_SlM=MC} yield
\[
\M(A)\le\M_{\CC}(A)=\M_{\Sl}(A),\qquad\qquad\Mw(A')\le\Mw_{\CC}(A')=\Mw_{\Sl}(A'),
\]
which are half of the the inequalities of the theorem.
We are left to establish: 
\be\label{prf_thm_M_et_SlM_0}
\M_{\Sl}(A)\le\sqrt{\binom nk}\,\M(A),\qquad\qquad\Mw_{\Sl}(A')\le\sqrt{\binom{n_1}{k_1}\binom{n_2}{k_2}}\,\Mw(A').
\ee
By lower semicontinuity of the slicing mass with respect to $\F$- (respectively $\Fw$-) convergence, it is sufficient to establish these inequalities for polyhedral $k$-chains (respectively for polyhedral $(k_1,k_2)$-chains). Localizing by using restrictions, we may even assume that $A$ is of the form $gp$ for some polyhedral $k$-cell $p$  (respectively  that $A'$ is of the form $gp'$ for a polyhedral $(k_1,k_2)$-cell $p'$). In this case, denoting $\pi_\g$ the orthogonal projection on $\X_\g$, we have the obvious identities
\be\label{prf_thm_M_et_SlM_1}
\int_{\X_\g} \M\lt((gp)\cap\X_{\ov\g}(x)\rt)\,d\h^k(x)=|g|_G\int_{\X_\g} \M\lt(p\cap\X_{\ov\g}(x)\rt)\,d\h^k(x) =|g|_G\M(\pi_\g p),
\ee
and similarly
\be\label{prf_thm_M_et_SlM_2}
\int_{\X_\g} \M\lt((gp')\cap\X_{\ov\g}(x)\rt)\,d\h^k(x) =|g|_G\M(\pi_\g p').
\ee
Let us establish the first inequality of~\eqref{prf_thm_M_et_SlM_0}. Summing~\eqref{prf_thm_M_et_SlM_1} over $\g\in I^n_k$ we obtain
\[
\M_{\Sl}(gp)=|g|_G\sum_{\g\in I^n_k} \M(\pi_\g p).
\]
Then, using the Cauchy--Schwarz inequality together with the fact that $I^n_k$ has cardinal $\binom nk$ and then using the Cauchy--Binet formula, we compute:
\[
\M_{\Sl}(gp)
\le|g|_G\sqrt{\binom nk\sum_\g\M(\pi_\g p)^2\,}
=|g|_G\sqrt{\binom nk}\,\M(p)=\sqrt{\binom nk}\,\M(pg).
\]
This yields the first inequality of~\eqref{prf_thm_M_et_SlM_0} for $A=gp$ and thus in the general case.\\
For the second inequality, we remark that since $p'$ is a $(k_1,k_2)$-cell, there holds $\pi_\g p'=0$ for $\g\in I^n_k\sm J$ where $J:=\{\g\in I^n_k:(|\g^1|,|\g^2|)=(k_1,k_2)\}$. Hence, summing~\eqref{prf_thm_M_et_SlM_2} over $\g\in J$, we obtain  
\[
\M_{\Sl}(gp')=|g|_G\sum_{\g\in J}\M(\pi_\g p').
\]
Performing the same computations as above we obtain the second inequality of~\eqref{prf_thm_M_et_SlM_0} for $A'=gp'$, the only difference is that the cardinal of $J$ is $\binom{n_1}{k_1}\binom{n_2}{k_2}$.
\end{proof}

Let us now derive the identities of Theorem~\ref{thm_MSlj=MSl} (which involve $\j$ and the slicing masses). 
\begin{proof}[Proof of Theorem~\ref{thm_MSlj=MSl}]
Let $A\in\FF^G_k(\R^n)$,  let $(k'_1,k'_2)\in D_k$ and let $\g\in I^n_k$ such that $(|\g^1|,|\g^2|)=(k'_1,k'_2)$. Using the commutation rule for slicing and $\j$ (see \cite[Proposition 4.13]{GM_tfc}), we have for almost every $x\in\X_\g$,
\[
(\j_{k'_1,j'_2}A)\cap\X_{\ov\g}(x)=\j_{0,0}\lt(A\cap\X_{\ov\g}(x)\rt).
\]
This identity involves $(0,0)$-chains and by~\cite[Theorem~5.6]{GM_tfc}, there holds for almost every $x\in\X_\g$,
\[
\Mw\lt((\j_{k'_1,j'_2}A)\cap\X_{\ov\g}(x)\rt)=\Mw\lt(\j_{0,0}\lt[A\cap\X_{\ov\g}(x)\rt]\rt)=\M(A\cap\X_{\ov\g}(x)).
\]
Integrating over $\X_\g$, we get,
\[
\int_{\X_\g}\Mw\lt((\j_{k'_1,j'_2}A)\cap\X_{\ov\g}(x)\rt)\,d\h^k(x) =\int_{\X_\g} \M(A\cap\X_{\ov\g}(x))\,d\h^k(x).
\]
Summing over  $\g$, we obtain
\[
\Mw_{\Sl}(\j_{k'_1,k'_2}A)=\sum_{\substack{\g\in I^n_k\\(|\g^1|,|\g^2|)=(k'_1,k'_2)}} \int_{\X_\g} \M(A\cap\X_{\ov\g}(x) )\,d\h^k(x).
\]
Summing again over $(k'_1,k'_2)$ leads to
\be\label{proof_thm_MSlj=MSl_1}
\sum_{(k'_1,k'_2)\in D_k} \Mw_{\Sl}(\j_{k'_1,k'_2}A)=\M_{\Sl}(A),
\ee
which is point~(i) of the theorem. Now, for $A\in\FF^G_{k,(k_1,k_2)}(\R^n)$ we have by definition $\j_{k'_1,k'_2}A=0$ for $(k'_1,k'_2)\ne(k_1,k_2)$ and the above identity reduces to
\[
\Mw_{\Sl}(\j_{k_1,k_2}A)=\M_{\Sl}(A),
\]
which is the first identity of point~(ii) of the theorem. Eventually, by~\cite[Proposition~4.9]{GM_tfc} for $A\in\FF^G_{k,(k_1,k_2)}(\R^n)$, 
\[
\j_{k'_1,k'_2}\pt A=\begin{cases}
\pt_1\j_{k_1,k_2}A&\text{if }(k'_1,k'_2)=(k_1-1,k_2),\\
\pt_2\j_{k_1,k_2}A&\text{if }(k'_1,k'_2)=(k_1,k_2-1),\\
\qquad 0&\text{in the other cases}.
\end{cases}
\] 
Applying~\eqref{proof_thm_MSlj=MSl_1} with $\pt A$ in place of $A$, these identities yield
\[
\Mw_{\Sl}(\pt_1\j_{k_1,k_2}A)+\Mw_{\Sl}(\pt_2\j_{k_1,k_2}A)=\M_{\Sl}(\pt A),
\]
which is the second identity of~(ii). This ends the proof of the theorem.
\end{proof}

\subsection{A counterexample (proof of Proposition~\ref{prop_j_not_onto})}
\label{Ss_ctrex}

We assume $n_1=n_2=1$ and that $G$ has a nonzero element $g$. We build below a rectifiable $(0,1)$-chain which is not in the image of $\FF^G_{1,(0,1)}(\R^2)$ by $\j_{0,1}$ and a $(0,0)$-chain in $\FF^G_{0,0}(\R^2)$ which is not in the image of $\FF^G_{0,(0,0)}(\R^2)$ by $\j_{0,0}$. This is sufficient for proving Proposition~\ref{prop_j_not_onto} as we obtain counterexamples in higher dimension by tensorizing (and possibly exchanging the roles of $\X_\a$ and $\X_\ova$).  

\begin{proof}[Proof of Propositions~\ref{prop_j_not_onto}]
We consider the $2$-chain $Q'=gT$ where $T$ is the (positively oriented) triangle,
\[
\{(x^1,x^2)\in\R^2:x^1,x^2\ge0, x^1+x^2\le1\}.
\]
Recall that $Q'\in \FF_2^G(\R^2)=\FF_{1,1}^G(\R^2)$. Let us set  $A':=\pt_1Q'\in\FF^G_{1,0}(\R^2)$ and $B':=\pt_1\pt_2Q'=-\pt_2\pt_1Q'\in\FF^G_{0,0}(\R^2)$. We show below that $A'$ and $B'$ are counterexamples for proving~(i) and~(ii) respectively. \medskip

\noindent
\textit{Step~1, proof of (i).} 
Let us check that $A'$ has finite mass. Denoting, 
\[
L:=\lt\{(x^1,x^2)\in\R^2:x^1+x^2=1\rt\},
\]
we observe that $A'=\pt_1Q'$ is supported  in $L\cup\{0\}\t[0,1]$. Indeed, denoting for $j\ge1$ and $i\ge 0$, $x_{j,i}:=(1-t_{j,i},t_{j,i})$ with $t_{j,i}:=(i+1/2)2^{-j}$, $A'$ is the limit as $j\up\oo$ of the sequence of polyhedral $(0,1)$-chains
\[
R'_j:=-g\lb (0,e_2)\rb + \sum_{i=0}^{2^j-1}g \lb(x_{j,i}-2^{-j-1}e_2,x_{j,i}+2^{-j-1}e_2)\rb.
\]
Since
\[
\supp R'_j\sub\lt(L\cup\{0\}\t[0,1]\rt)+\lt(\{0\}\t[-2^{-j-1},2^{-j-1}]\rt),
\]
we get that $A'$ is supported  in $L\cup\{0\}\t[0,1]$.\\
We compute, using the commutativity of $\pt_1$ with slicing along coordinate lines,  
\begin{equation}\label{sliceexample}
\Mw_{\Sl}(A')=\int_\R \Mw\lt(A'\cap(te_2+\R e_1)\rt)dt=\int_0^1\Mw\lt(g\lb(1-t,t)\rb-g\lb (0,t)\rb\rt)ds=2|g|_G<\oo.
\end{equation}
Hence $A'$ has finite mass and since it is supported on the $1$-rectifiable set $L\cup\{0\}\t[0,1]$, the tensor chain $A'$ is rectifiable.

Let us denote $A'_0:=A'\restr \{0\}\t\R$ and $A'_1:=A'\restr L$. We have $A'=A'_0+A'_1$ and since 
\[
A'_0=-\lb(0,e_2)\rb=\j_{0,1}(-\lb(0,e_2)\rb),
\]
with $-\lb(0,e_2)\rb\in \PP_{0,1}(\R^2)\sub\FF^G_{1,(0,1)}(\R^2)$, it is enough to prove that $A'_1\not\in\j_{0,1}(\FF^G_{1,(0,1)}(\R^2))$. 

Let us assume by contradiction that there exists $A_1\in\FF^G_{1,(0,1)}(\R^2)$ such that $\j_{0,1}A_1=A'_1$. Since $\j_{1,0}A_1=0$, by Theorem~\ref{thm_MSlj=MSl}, we have 
\be\label{A1not0}
\M_{\Sl}(A_1)=\Mw_{\Sl}(A'_1)=\Mw_{\Sl}(A')-\Mw_{\Sl}(A'_0)=|g|_G\ \in(0,\oo).
\ee
Moreover, since $\j$ is one-to-one and commutes with restrictions, there holds $A_1=A_1\restr L$ (in particular $A_1$ is rectifiable). Denoting by $\pi$ the orthogonal projection on $\R e_1$, we set $\wh A_1:=\pi\pf A_1$.\\
Now, we claim that, 
\[
\j_{0,0}\pi\pf A=\pi\pf\j_{1,0}A\qquad\text{for every }A\in\FF^G_1(\R^2). 
\]
Indeed, the relation holds true for polyhedral tensor $1$-chains and extends by continuity. Here, since $A_1\in\FF^G_{1,0}(\R^2)$, there holds $\j_{0,0}\wh A_1=\j_{0,0}\pi\pf A_1=\pi\pf\j_{0,1}A_1=0$ and we get, 
\be\label{proof_j_not_onto_1}
\wh A_1=0.
\ee
Eventually, we define a Lipschitz continuous mapping $f:\R^2\to\R^2$ by $f(x^1,x^2):=(x^1,1-x^1)$. There holds $f\circ\pi=\Id$ on $L$ and since $A_1$ is supported in $L$, we have
\[
A_1=\Id\pf(A_1\restr L)=f\pf\pi\pf A_1=f\pf \wh A_1\st{\eqref{proof_j_not_onto_1}}=0.
\]
This contradicts~\eqref{A1not0} and shows that $A'\not\in\j_{0,1}(\FF^G_{1,{0,1}}(\R^2))$.\medskip

\noindent
\textit{Step~2, proof of (ii).} 
Let us now show that $B'$ provides a counterexample for~(ii). We first check that $B'$ does not vanish. Assuming by contradiction that $B'=0$, we compute
\[
 \pt\lt[\i(\pt_2 Q')\rt]=\i(\pt_1\pt_2Q')=\i B'=0.
\] 
Hence $\i(\pt_2 Q')$ is a $1$-cycle over $\R e_1$, so it identifies with a constant $L^1$-function on $\R e_1$ and we get $\pt_2 Q'=0$. Using the operator $\ov \i$, symmetric to $\i$, obtained by swapping the roles of $\X_\a=\R e_1$ and $\X_{\ova}=\R e_2$, we deduce that $\pt  \ov \i Q'= \ov \i \pt_2 Q'=0$ and thus $\ov \i Q'=0$. This would lead to $Q'=0$   which is in  contradiction with $\M(Q')=|g|_G/2\ne0$. We conclude that 
\be\label{B'not0}
B'\ne0.
\ee

Next, let us establish that $B'\not\in\j_{0,0}(\FF^G_0(\R^2))$. For this we assume by contradiction that there is some $B\in \FF_0^G(\R^n)$ with $B'=\j_{0,0} B$. As in the previous step we observe that $A'=\pt_1Q'$ and $A''=\pt_2Q''$ are supported respectively in $L\cup\{0\}\t[0,1]$ and $L\cup[0,1]\t\{0\}$. Since $B'=\pt_1A''=-\pt_2A'$, we deduce that $B'$ is supported in
\[
\supp(\pt_1Q') \cap\supp(\pt_2Q')\sub L.
\]
By Theorem~\ref{thm_supports}, we get that $B$ is also supported in $L$. In particular, for almost every $s,t\in\R$, the slices $B\cap [se_1+\R e_2]$ and $B\cap [te_2+\R e_1]$ are of the form $g'\lb x\rb$ for some $x\in L$. More precisely, using the operator $\chi$, we have for almost every $s,t\in\R$,
\[
B\cap(se_1+\R e_2)=\chi\lt(B\cap(se_1+\R e_2)\rt) \lb (s,1-s) \rb, \quad B\cap(te_2+\R e_1)=\chi\lt(B\cap(te_2+\R e_1)\rt) \lb (1-t,t) \rb.
\]
We compute (for almost every $s\in\R$),
\begin{align*}
\chi\lt(B\cap(se_1+\R e_2)\rt)
&=\chi\lt((\j_{0,0}B')\cap(se_1+\R e_2)\rt)\\
&=\chi\lt(\j_{0,0}\lt[B'\cap(se_1+\R e_2)\rt]\rt)   \\
&=\chiw\lt(B'\cap(se_1+\R e_2)\rt)\\
&=\chiw\lt((\pt_1\pt_2Q')\cap(se_1+\R e_2)\rt)\\
&=\chiw\lt(\pt_1\lt[(\pt_2Q')\cap(se_1+\R e_2)\rt]\rt),
\end{align*}
where we used~\eqref{form_chiw_2} for the third identity.
Now, $C'(s):=(\pt_2Q')\cap(se_1+\R e_2)$ is a chain in the vertical line $se_1+\R e_2$ so $\pt_1C'(s)=0$. It follows that $B\cap(se_1+\R e_2)$ vanishes for almost every $s$. Symmetrically $B\cap(te_2+\R e_1)=0$ for almost every $t$ and we conclude that $\Mw_{\Sl}(B)=0$ and then that $B'=\j_{0,0}B=0$ which contradicts~\eqref{B'not0}. Hence $B'$ is not in the image of $\j_{0,0}$.
\end{proof}
\subsection*{Acknowledments}
M. Goldman is partially supported by the ANR SHAPO. B. Merlet is partially supported by the INRIA team RAPSODI and the Labex CEMPI (ANR-11-LABX-0007-01).

\bibliographystyle{alpha}
\bibliography{BibStripes}
\end{document}